\begin{document}

\setlength{\parindent}{5mm}
\renewcommand{\leq}{\leqslant}
\renewcommand{\geq}{\geqslant}
\newcommand{\Z}{\mathbb{Z}}
\newcommand{\R}{\mathbb{R}}
\newcommand{\C}{\mathbb{C}}
\newcommand{\eps}{\varepsilon}
\newcommand{\chzrel}{c_{\mathrm{LR}}}
\newcommand{\Href}{H_{\mathrm{ref}}}
\newcommand{\Jref}{J_{\mathrm{ref}}}
\newcommand{\p}{\textbf{p}}

\theoremstyle{plain}
\newtheorem{theo}{Theorem}
\newtheorem{prop}[theo]{Proposition}
\newtheorem{lemma}[theo]{Lemma}
\newtheorem{definition}[theo]{Definition}
\newtheorem*{notation*}{Notation}
\newtheorem*{notations*}{Notations}
\newtheorem{corol}[theo]{Corollary}
\newtheorem{conj}[theo]{Conjecture}
\newtheorem*{claim*}{Claim}

\newenvironment{demo}[1][]{\addvspace{8mm} \emph{Proof #1.
    ~~}}{~~~$\Box$\bigskip}

\newlength{\espaceavantspecialthm}
\newlength{\espaceapresspecialthm}
\setlength{\espaceavantspecialthm}{\topsep} \setlength{\espaceapresspecialthm}{\topsep}

\newtheorem{exple}[theo]{Example}
\renewcommand{\theexple}{}
\newenvironment{example}{\begin{exple}\rm }{\hfill $\blacktriangleleft$\end{exple}}

\newtheorem{quest}[theo]{Question}
\renewcommand{\thequest}{}
\newenvironment{question}{\begin{quest}\it }{\end{quest}}

\newenvironment{remark}[1][]{\refstepcounter{theo} 
\vskip \espaceavantspecialthm \noindent \textsc{Remark~\thetheo
#1.} }%
{\vskip \espaceapresspecialthm}

\def\bb#1{\mathbb{#1}} \def\m#1{\mathcal{#1}}
\def\del{\partial}
\def\co{\colon\thinspace}
\def\id{\mathrm{Id}}
\def\Crit{\mathrm{Crit}}
\def\Spec{\mathrm{Spec}}
\def\osc{\mathrm{osc}}

\title[Reduction of symplectic homeomorphisms]{Reduction of symplectic homeomorphisms}
\author{Vincent Humili\`ere, R\'emi Leclercq, Sobhan Seyfaddini}
\date{\today}

\address{VH: Institut de Math\'ematiques de Jussieu, Universit\'e Pierre et Marie Curie, 4 place Jussieu, 75005 Paris, France}
\email{vincent.humiliere@imj-prg.fr}

\address{RL: Universit\'e Paris-Sud, D\'epartement de Math\'ematiques, Bat. 425, 91400 Orsay, France}
\email{remi.leclercq@math.u-psud.fr}

\address{SS: D\'epartement de Math\'ematiques et Applications de l'\'Ecole Normale Sup\'erieure, 45 rue d'Ulm, F 75230 Paris cedex 05}
\email{sobhan.seyfaddini@ens.fr}

\subjclass[2010]{Primary 53D40; Secondary 37J05} 
\keywords{symplectic manifolds, symplectic reduction, $C^0$--symplectic topology, spectral invariants. \textit{Mots clés}: Variétés symplectiques, réduction symplectique, topologie symplectique $C^0$, invariants spectraux.}

\selectlanguage{english}
\begin{abstract}
 In \cite{HLS13}, we proved that symplectic homeomorphisms preserving a coisotropic submanifold $C$, preserve its characteristic foliation as well.  As a consequence, such symplectic homeomorphisms descend to the reduction of the coisotropic $C$.
 
  In this article we show that these reduced homeomorphisms continue to exhibit certain symplectic properties.  In particular, in the specific setting where the symplectic manifold is a torus and the coisotropic is a standard subtorus, we prove that the reduced homeomorphism preserves spectral invariants and hence the spectral capacity.
  
  To prove our main result, we use Lagrangian Floer theory to construct a new class of  spectral invariants  which satisfy a non-standard triangle inequality.

\bigskip
\selectlanguage{french}
\noindent\textsc{Résumé.}  Nous avons démontré dans \cite{HLS13}, qu'un homéomorphisme symplectique qui laisse invariante une sous-variété coisotrope $C$, préserve également son feuilletage caractéristique. Il induit donc un homéomorphisme sur la réduction symplectique de $C$. 
 
Dans cet article, nous démontrons que l'homéomorphisme ainsi obtenu exhibe certaines propriétés symplectiques. En particulier, dans le cas où la variété symplectique ambiante est un tore, et la sous-variété coisotrope est un sous-tore standard, nous démontrons que l'homéomorphisme réduit préserve les invariants spectraux et donc aussi la capacité spectrale. 
  
  Pour démontrer notre résultat principal, nous construisons, à l'aide de l'homologie de Floer Lagrangienne, une nouvelle famille d'invariants spectraux qui satisfont un nouveau type d'inégalité triangulaire.
 \end{abstract}

\selectlanguage{english}

\maketitle

\tableofcontents

\section{Introduction}
\label{sec:introduction}

\subsection{Context and main result}
\label{sec:context-main-result}

The main objects under study in this paper are symplectic homeomorphisms. Given a symplectic manifold $(M,\omega)$, a homeomorphism $\phi:M\to M$ is called a symplectic homeomorphism if it is the $C^0$--limit of a sequence of symplectic diffeomorphisms. This definition is motivated by a celebrated theorem due to Gromov and Eliashberg which asserts that if a symplectic homeomorphism $\phi$ is smooth, then it is a symplectic diffeomorphism in the usual sense: $\phi^\ast\omega=\omega$.  
 
  Understanding the extent to which symplectic homeomorphisms behave like their smooth counterparts constitutes the central theme of $C^0$--symplectic geometry.  
A recent discovery of Buhovsky and Opshtein suggests that these homeomorphisms are capable of exhibiting far more flexibility than symplectic diffeomorphisms: In \cite{buhovsky-opshtein}, they construct an example of a symplectic homeomorphism of the standard $\bb C^3$ whose restriction to the symplectic subspace $\bb C\times\{0\}\times\{0\}$ is the contraction $(z,0,0)\mapsto(\frac12z,0,0)$. Such behavior is impossible for a symplectic diffeomorphism but of course very typical for a volume-preserving homeomorphism.  On the other hand, it is well-known that symplectic homeomorphisms are surprisingly rigid in comparison to volume-preserving maps.  The following example of rigidity is the starting point of this article: Recall that a coisotropic submanifold is a submanifold $C\subset M$ whose tangent space, at every point of $C$, contains its symplectic orthogonal: $TC^\omega\subset TC$.  Moreover, the distribution $TC^\omega$ is integrable and the foliation  it spans is called the characteristic foliation of $C$. 

\begin{theo}[\cite{HLS13}]Let $C$ be a smooth  coisotropic submanifold of a symplectic manifold $(M,\omega)$. Let $\phi$ denote a symplectic homeomorphism. If $C'= \phi(C)$ is smooth, then it is coisotropic.  Furthermore, $\phi$ maps the characteristic foliation of $C$ to that of $C'$.
\end{theo}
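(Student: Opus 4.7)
My plan is to give an intrinsic, $C^0$-stable characterization of both coisotropy and the characteristic foliation in terms of a Hofer-type pseudo-distance on the submanifold, and then to invoke its invariance under symplectic homeomorphisms.

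For a smooth submanifold $N \subset M$ and points $p, q \in N$, I would define
$$d_N(p,q) \;=\; \inf\{\, \| H \| \,:\, \phi_H^1(p)=q, \ H \text{ compactly supported in a small tubular neighborhood of a path from } p \text{ to } q \text{ in } N \,\},$$
where $\| H \|$ denotes Hofer's norm. The local characterization I would aim to establish is: for a smooth coisotropic $C$, the null set $\{q\in C : d_C(p,q)=0\}$ coincides with the characteristic leaf through $p$, whereas for a smooth non-coisotropic $N$, this null set has dimension strictly less than $\mathrm{codim}(N)$. The inclusion ``leaf $\subseteq$ null set'' is produced by constructing Hamiltonians vanishing on $C$ with a prescribed normal derivative, using a Weinstein-type normal form for coisotropic submanifolds; the resulting flows move points along the characteristic direction with arbitrarily small Hofer energy. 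The reverse inclusion, and the non-coisotropic case, follow from a displacement-type lower bound: any motion in a direction in which $\omega|_{TN}$ is non-degenerate costs a positive amount of Hofer energy proportional to a suitable capacity of a transverse ball.

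Next, I would verify that $d_N$ is an invariant of the pair $(N,\phi)$ in the following sense: if $\phi = \lim \phi_n$ with $\phi_n$ symplectic and $\phi(N)=N'$ also smooth, then $d_{N'}(\phi(p),\phi(q)) = d_N(p,q)$. Indeed, conjugation by a symplectomorphism preserves Hofer's norm exactly, and the $C^0$-limit of the conjugated Hamiltonian flows $\phi_n\circ \phi_H^t \circ \phi_n^{-1}$ still carries a well-defined Hofer-type energy via the Oh--M\"uller theory of hameomorphisms. The admissibility condition (support near a path in $N$) passes to the limit because $\phi$ maps $N$ continuously onto $N'$. Applying the characterization of step one now to $d_{C'}$, the null set of $d_{C'}$ at $\phi(p)$ is the $\phi$-image of the null set of $d_C$ at $p$, hence a smooth submanifold of $C'$ of dimension $\mathrm{codim}(C) = \mathrm{codim}(C')$. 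This forces $C'$ to be coisotropic, and identifies the image of the characteristic leaf through $p$ with the leaf through $\phi(p)$.

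The principal obstacle is the second step: while Hofer's norm is conjugation-invariant under \emph{smooth} symplectomorphisms, the admissibility condition defining $d_N$ refers to the support of $H$ near a path in $N$, which is a purely geometric condition and does not obviously survive a mere $C^0$-conjugation. To handle this, I would reformulate $d_N$ using shrinking neighborhoods in $M$ of the endpoints $p$ and $q$ alone, rather than of a chosen path, and then invoke a sharp energy-capacity inequality to rule out Hamiltonian trajectories that stray far from $N$ within a prescribed Hofer budget. A secondary difficulty is to match the dimension of the null set on both sides without assuming a priori that $C'$ is coisotropic; this is where the codimension count in the local characterization, combined with the fact that $\phi$ restricts to a homeomorphism $C \to C'$, becomes essential.
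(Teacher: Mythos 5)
The theorem you are trying to prove is quoted from \cite{HLS13} and is not re-proved in this paper, but your proposal can still be measured against the known argument, and it contains a fatal gap at its very first step. The characterization of coisotropy via the null sets of your Hofer-type pseudo-distance $d_N$ is false, because the claimed ``displacement-type lower bound'' does not exist: single points (and arcs) have zero displacement energy, so moving a prescribed point $p$ to a prescribed point $q$ costs no Hofer energy, even in directions where $\omega|_{TN}$ is non-degenerate and even with support confined to an arbitrarily small neighborhood of a path in $N$. Concretely, in $(\R^4, dx\wedge dy+dz\wedge dw)$ take $N=\{z=w=0\}$ (symplectic) or $C=\{w=0\}$ (coisotropic, with characteristic direction $\partial_z$), and let $H=-y\,\beta(y)\,\gamma(z,w)\,\alpha(x)$ with cutoffs $\beta,\gamma$ equal to $1$ near $0$ and supported in a $\delta$-neighborhood, $\alpha\equiv 1$ on $[0,1]$. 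Along the segment $\{(x,0,0,0):x\in[0,1]\}$ one has $X_H=\partial_x$, so $\phi_H^1$ carries $(0,0,0,0)$ to $(1,0,0,0)$ --- a motion inside $N$, respectively a motion inside $C$ \emph{transverse} to its characteristic foliation --- while $\|H\|\leq 2\delta\to 0$ and the support shrinks onto the path. Hence $d_N(p,q)=0$ for all $p,q$ in a connected component of any submanifold, the null set is all of $N$, and it detects neither coisotropy nor the characteristic leaves. Your second step has the further unresolved issue you partly acknowledge: conjugating by a $C^0$-limit produces only a \emph{continuous} generator, and comparing the infimum over smooth admissible Hamiltonians with the infimum over such limits (while keeping the exact endpoint condition $\phi_H^1(p)=q$, which is not preserved under $C^0$-approximation of flows) is not addressed.

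The actual proof in \cite{HLS13} avoids both problems by using a different, genuinely $C^0$-robust characterization: a submanifold $C$ is coisotropic if and only if every Hamiltonian $H$ with $H|_C=0$ has a flow preserving $C$, and in that case the characteristic leaf through $p$ is exactly the set of points reachable from $p$ by such flows (this is the ``Theorem 3'' of \cite{HLS13} invoked in the proof of Proposition \ref{prop:smooth-reduced-diffeo-is-sympl} above). The constraint $H|_C=0$ forces $X_H\in TC^\omega$ along $C$, which is what ties the dynamics to the characteristic foliation; your support condition does not. The $C^0$-invariance of that characterization rests on the uniqueness theorems for continuous Hamiltonian dynamics in the sense of M\"uller--Oh (so that $\phi^{-1}\phi_H^t\phi$ is the flow of $H\circ\phi$), and the quantitative input is an energy--capacity inequality applied to displacing \emph{open balls} transverse to $C$ --- sets of positive capacity --- rather than to moving individual points, which is where your lower bound breaks down.
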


 Prior to the discovery of the above theorem, the special cases of Lagrangian submanifolds and hypersurfaces have been treated, respectively, by Laudenbach--Sikorav \cite{LS94} and Opshtein \cite{opshtein}.\\

We are now in position to describe the problem we are interested in.  Denote by 
$\m F$ and $\m F',$ respectively, the characteristic foliations of the coisotropics $C$ and $C'$ from the above theorem.  The reduced spaces $\m R= C/\m F$ and $\m R'= C'/\m F'$ are defined as the quotients of the coisotropic submanifolds by their characteristic foliations.   These spaces are, at least locally, smooth manifolds and they can be equipped with natural symplectic structures induced by $\omega$.  Since $\phi(\m F) = \m F'$, the homeomorphism $\phi$ induces a homeomorphism $\phi_R: \m R \rightarrow \m R'$ of the reduced spaces. It is a classical fact that when $\phi$ is smooth, and hence symplectic, the reduced map $\phi_R$ is a symplectic diffeomorphism as well.
It is therefore natural to ask whether the homeomorphism $\phi_R$ remains symplectic, in any sense, when $\phi$ is not assumed to be smooth.  This is the question we seek to answer in this article.

We begin by first supposing that the reduction $\phi_R$ is smooth. It turns out that this scenario can be resolved rather easily using a result of \cite{HLS13}. 

\begin{prop}\label{prop:smooth-reduced-diffeo-is-sympl}
  Let $C$ be a coisotropic submanifold whose reduction $\m R$ is a symplectic manifold\footnote{This is always locally true.}, and $\phi$ be a symplectic homeomorphism. Assume that $C'=\phi(C)$ is smooth and therefore is coisotropic and admits a reduction $\m R'$. Denote by $\phi_R:\m R\to\m R'$ the map induced by $\phi$. Then, if $\phi_R$ is smooth, it is symplectic.
\end{prop}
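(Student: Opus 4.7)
My plan is to invoke the Gromov--Eliashberg theorem, which asserts that a homeomorphism that is both a $C^0$-limit of symplectomorphisms and smooth is itself a symplectomorphism. The key step will therefore be to show that the reduced map $\phi_R$ is a symplectic homeomorphism between $(\mathcal{R},\omega_{\mathcal{R}})$ and $(\mathcal{R}',\omega_{\mathcal{R}'})$, i.e.\ a uniform limit of smooth symplectomorphisms $\mathcal{R}\to\mathcal{R}'$. Once this is in hand, the smoothness hypothesis on $\phi_R$ combined with Gromov--Eliashberg applied in the reduced setting immediately gives $\phi_R^\ast\omega_{\mathcal{R}'}=\omega_{\mathcal{R}}$.

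To produce the approximating symplectomorphisms, I start with a sequence of symplectomorphisms $\phi_n\co M\to M$ converging uniformly to $\phi$. These do not in general preserve $C$, but $\phi_n(C)$ is a smooth submanifold converging in $C^0$ to $\phi(C)=C'$. I then modify each $\phi_n$ by composing with a small Hamiltonian diffeomorphism $\psi_n$ supported near $C'$ so that the resulting map $\tilde\phi_n:=\psi_n\circ\phi_n$ sends $C$ onto $C'$, while still satisfying $\tilde\phi_n\to\phi$ uniformly. Being a smooth symplectomorphism preserving $C$, the map $\tilde\phi_n$ preserves the characteristic distribution $TC^\omega$ and hence the foliation $\mathcal{F}$, so it descends to a smooth symplectomorphism $(\tilde\phi_n)_R\co\mathcal{R}\to\mathcal{R}'$. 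Because $\tilde\phi_n\to\phi$ uniformly and the quotient projections are continuous, $(\tilde\phi_n)_R\to\phi_R$ uniformly, exhibiting $\phi_R$ as a $C^0$-limit of symplectomorphisms of the reductions.

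The main obstacle is the modification step, that is, producing the Hamiltonians $\psi_n$ with $\psi_n(\phi_n(C))=C'$ and $\psi_n\to\mathrm{id}$. The difficulty is that $C^0$-proximity of $\phi_n(C)$ to $C'$ does not a priori furnish the $C^1$-control that a Weinstein-style coisotropic neighborhood argument would require. This is exactly where I expect the \emph{result of \cite{HLS13}} alluded to in the statement to enter: presumably a $C^0$-rigidity statement about coisotropic submanifolds (arising in the proof of the main theorem of \cite{HLS13}) that allows one to upgrade topological convergence $\phi_n(C)\to C'$ to normal-form control strong enough to produce the required small Hamiltonian isotopies, thereby closing the argument.
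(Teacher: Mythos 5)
The gap you flag in your last paragraph is real and is not filled by anything in \cite{HLS13}. The result from \cite{HLS13} that the paper actually uses is of a completely different nature: it is a statement about \emph{continuous Hamiltonian dynamics} (Theorem 3 there: the flow, in the M\"uller--Oh sense, of a continuous Hamiltonian vanishing on $C$ moves points along the characteristic leaves of $C$). It gives no normal-form or $C^1$ control on submanifolds that are merely $C^0$-close to $C'$. Your modification step requires exactly such control: the images $\phi_n(C)$ converge to $C'$ only in the Hausdorff/$C^0$ sense, their tangent planes can oscillate arbitrarily, and there is no known procedure for producing $C^0$-small (Hamiltonian) diffeomorphisms $\psi_n$ carrying $\phi_n(C)$ onto $C'$. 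Indeed, if such a renormalization were available, one could approximate $\phi|_C$ directly by symplectomorphisms preserving $C$, which would resolve far harder questions in this subject (e.g.\ it would essentially show that $\phi_R$ is always a symplectic homeomorphism, which the authors explicitly describe as out of reach). So the proposal as written does not close.

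For comparison, the paper's proof sidesteps approximation of $\phi$ near $C$ altogether. Given a smooth $f_R$ on $\m R'$, set $g_R=f_R\circ\phi_R$ and lift to $f,g$ on $M$. The Hamiltonian $H=(f\circ\phi-g)\circ\phi_g^t$ is continuous and vanishes on $C$, and its continuous Hamiltonian flow is the explicit conjugation $(\phi_g^t)^{-1}\phi^{-1}\phi_f^t\phi$. By the \cite{HLS13} theorem this flow follows the characteristics of $C$, hence descends to the identity on $\m R$, giving $\phi_R^{-1}\phi_{f_R}^t\phi_R=\phi_{g_R}^t$. That $\phi_R$ intertwines Hamiltonian flows of $f_R$ and $f_R\circ\phi_R$ for all $f_R$ then forces $\phi_R$ (assumed smooth) to preserve Poisson brackets, hence to be symplectic. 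If you want to salvage your strategy, you would need to replace the submanifold-renormalization step by an argument of this dynamical type; as it stands the key step is missing.
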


We would like to point out that a similar result, with a similar proof, has already appeared in \cite{buhovsky-opshtein} (See Proposition 6). 

\begin{proof} We will prove that for any smooth function $f_R$ on $\m R'$, the Hamiltonian flow generated by the function $f_R\circ\phi_R$ is $\phi_R^{-1}\phi_{f_R}^t\phi_R$, where $\phi_{f_R}^t$ is the Hamiltonian flow generated by $f_R$.  It is not hard to conclude from this that $\phi_R$ is symplectic: For example, it can easily be checked that $\phi_R$ preserves the Poisson bracket, i.e.  $\{h_R\circ\phi_R, g_R \circ\phi_R \} = \{h_R, g_R\} \circ \phi_R$ for any two smooth functions $h_R$, $g_R$ on  $\m R'$.

Let $f_R \co \m R' \rightarrow \bb R$ be smooth. We denote by $g_R \co \m R \rightarrow \bb R$ the function defined by $g_R=f_R \circ \phi_R$. Let $f$ and $g$ be any smooth lifts to $M$ of $f_R$ and $g_R,$ respectively.

First, notice that by definition the restrictions to $C$ of $f \circ \phi$ and $g$ coincide. Since $g$ is constant on the characteristic leaves of $C$, its Hamiltonian flow $\phi_g^t$ preserves $C$.
Thus $H=(f\circ \phi - g)\circ \phi_g^t$ vanishes on $C$ for all $t$. By \cite[Theorem 3]{HLS13}, the flow of the continuous Hamiltonian\footnote{The continuous function $H$ generates a continuous flow in the sense defined by M\"uller and Oh \cite{muller-oh}.}$H$ follows the characteristic leaves of $C$.
On the other hand we know that this flow is given by the formula 
$\phi_H^t=(\phi_g^{t})^{-1} \phi^{-1} \phi_{f}^t\phi$.
This isotopy descends to the reduction $\m R$  where it induces the isotopy $(\phi_{g_R}^t)^{-1}\phi_R^{-1} \phi_{f_R}^t\phi_R$. But since $\phi_H^t$ follows characteristics it must descend to the identity. Hence $(\phi_{g_R}^t)^{-1}\phi_R^{-1} \phi_{f_R}^t\phi_R=\id$ as claimed.
\end{proof}

When $\phi_R$ is not assumed to be smooth, the situation becomes far more complicated.  The question of whether or not $\phi_R$ is a symplectic homeomorphism seems to be very difficult and, at least currently, completely out of reach.  Given the difficulty of this question, one could instead ask if there exist symplectic invariants which are preserved by reduced homeomorphisms.  In this spirit, and since symplectic homeomorphisms are capacity preserving, Opshtein formulated the following a priori easier problem:

\begin{question}\label{question:capacity-preserving} Is the reduction $\phi_R$ of a symplectic homeomorphism $\phi$ preserving a coisotropic submanifold always a capacity preserving homeomorphism?
\end{question}

  Partial positive results have been obtained by Buhovsky and Opshtein \cite{buhovsky-opshtein}. They proved in particular that in the case where $C$ is a hypersurface, the map $\phi_R$ is a ``non-squeezing map'' in the sense that for every open set $U$ containing a symplectic ball of radius $r$, the image $\phi_R(U)$ cannot  be embedded in a symplectic cylinder over a 2--disk of radius $R<r$. 
This does not resolve Opshtein's question, but since capacity preserving maps are non-squeezing it does provide positive evidence for it.  In the case of general coisotropic submanifolds, they conjecture that the same holds and  indicate as to how one might approach this conjecture.

In this article, we work in the specific setting where $M$ is the torus $\bb  T^{2(k_1+k_2)}$ equipped with its standard symplectic structure and  $C=\bb T^{2k_1+ k_2}\times\{0\}^{k_2}$.  The reduction of $C$ is $\bb T^{2k_1}$ with its usual symplectic structure.  Our main theorem shows that, in this setting, the reduced homeomorphism $\phi_R$ preserves certain symplectic invariants referred to as spectral invariants.  This answers Opshtein's question positively, as it follows immediately that the spectral capacity is preserved by $\phi_R.$

 More precisely, for a time-dependent Hamiltonian $H$, denote by $c_+(H)$ the spectral invariant, defined by Schwarz \cite{schwarz}, associated to the fundamental class of $M$.  Roughly speaking, $c_+(H)$ is the action value at which the fundamental class $[M]$ appears in the Floer homology of $H$; see Equation \eqref{eq:definition-cplus} in Section \ref{sec:hamilt-floer-theory} for the precise definition.  (We should caution the reader that our notations and conventions are different than those of \cite{schwarz}.  For example,  $c_+(H)$ in this article corresponds to  $c(1;H)$ in \cite{schwarz}  where 1 is the generator of $H^0(M)$.)\footnote{In \cite{schwarz}, after constructing $c(1;H)$ the author proceeds to normalize the Hamiltonian $H$  by requiring that $\int_{M} H(t,x) \omega^n = 0$ for each $t\in [0,1]$.  This leads to an invariant of Hamiltonian diffeomorphisms, $c(1;\phi_H^1)$.}  For degenerate or continuous functions one defines $c_+(H) = \lim_{i\to \infty}c_+(H_i)$ where $H_i$ is a sequence of smooth non-degenerate Hamiltonians converging uniformly to $H$.  This limit is well-defined because $c_+$ satisfies a well-known Lipschitz estimate. We refer the reader to Section \ref{sec:hamilt-floer-theory} for further details. Here is our main result:

\begin{theo}\label{theo:main-theo} Let $\phi$ be a symplectic homeomorphism of the torus $\bb  T^{2(k_1+k_2)}$ equipped with its standard symplectic form. Assume that $\phi$ preserves the coisotropic submanifold $C=\bb T^{2k_1+k_2}\times\{0\}^{k_2}$. Denote by $\phi_R$ the induced homeomorphism on the reduced space $\m R=\bb T^{2k_1}$. Then, for every time-dependent continuous function $H$ on $[0,1]\times \m R$, we have:
$$c_+(H\circ \phi_R)=c_+(H),$$
 where $H \circ \phi_R(t,x) := H(t, \phi_R(x)).$
\end{theo}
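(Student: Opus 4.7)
The plan is to construct, via Lagrangian Floer theory, a new family of spectral invariants $\ell\co C^0([0,1]\times M) \to \R$ that, together with $c_+$, will bridge $M$ and its reduction $\m R$. Identifying $M = \m R \times \bb T^{2k_2}$ so that $C = \m R \times (\bb T^{k_2}\times\{0\}^{k_2})$, I would introduce the product Lagrangian $L := \bar L \times L_0 \subset C$, where $\bar L := \bb T^{k_1}\times\{0\}^{k_1} \subset \m R$ and $L_0 := \bb T^{k_2}\times\{0\}^{k_2}$, and define $\ell(H)$ via the action filtration on the Lagrangian Floer homology $HF^*(L,\phi_H^1 L)$, selecting the class which, through a K\"unneth splitting available when $H$ is a lift from $\m R$, corresponds to the fundamental class on the $\bar L$ factor.

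The argument will rest on three properties of $\ell$: (a) \emph{reduction compatibility}: $\ell(\pi^* \bar H) = c_+(\bar H)$ for every continuous $\bar H$ on $[0,1]\times \m R$, which I would prove by perturbing in the $\bb T^{2k_2}$ direction by a $C^2$-small Morse function, applying a K\"unneth-type decomposition of the Floer homology, and sending the perturbation to zero; (b) a \emph{non-standard triangle inequality} of the form $|\ell(H)-\ell(H')| \leq \|H-H'\|_{C^0([0,1]\times C)}$, which in particular yields $C^0$-continuity of $\ell$ and the crucial restriction principle $\ell(H) = \ell(H')$ whenever $H|_{[0,1]\times C} = H'|_{[0,1]\times C}$; (c) \emph{symplectic invariance}: $\ell(H\circ\psi) = \ell(H)$ for every $\psi \in \mathrm{Symp}(M)$. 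Granted these, the theorem would follow by a uniform approximation: choose $\phi_n \in \mathrm{Symp}(M)$ with $\phi_n \to \phi$ in $C^0$ and set $\tilde H := \pi^* H$; then
\[
  \ell(\tilde H\circ\phi_n) \overset{(c)}{=} \ell(\tilde H) \overset{(a)}{=} c_+(H),
  \qquad
  \ell(\tilde H\circ\phi_n) \overset{(b)}{\longrightarrow} \ell(\tilde H\circ\phi).
\]
Since $\phi|_C$ descends to $\phi_R$, for $x \in C$ one has $\tilde H(\phi(x)) = H(\phi_R(\pi(x))) = \pi^*(H\circ\phi_R)(x)$, so $\tilde H\circ\phi$ and $\pi^*(H\circ\phi_R)$ agree on $[0,1]\times C$; (b) then gives $\ell(\tilde H\circ\phi) = \ell(\pi^*(H\circ\phi_R))$, and (a) again identifies the latter with $c_+(H\circ\phi_R)$. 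Chaining the equalities yields the desired $c_+(H) = c_+(H\circ\phi_R)$.

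The main obstacle, and the novel technical input, is property (b): showing that $\ell(H)$ really depends on $H$ only through its restriction to $[0,1]\times C$ will require fine control over the action values of the relevant Floer strips, exploiting that both $L$ and $\phi_H^1(L)$ lie inside $C$ (up to a controlled error) so that the energy coming from $H$ off $C$ cannot reach the selected class. Property (c) will also be delicate, since the standard transformation rule reads $\ell(H\circ\psi;L) = \ell(H;\psi(L))$; obtaining genuine $\mathrm{Symp}$-invariance will presumably require either defining $\ell$ by a supremum over a $\mathrm{Symp}$-invariant family of Lagrangians, or exploiting the specific torus and product structure to identify $\ell(H;\psi(L))$ with $\ell(H;L)$ for every symplectic $\psi$.
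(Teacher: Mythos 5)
Your overall architecture (a Floer invariant attached to a pair of Lagrangians adapted to $C$, a reduction/K\"unneth identity, a restriction principle along $C$, and an invariance property, chained through a $C^0$--approximation of $\phi$) is close in spirit to the paper's, and you have correctly located the two hard points; but both are left as gaps, and neither is provable in the form you state. Property (c) is the fatal one. By naturality, $\ell(H\circ\psi;L)=\ell(H;\psi(L))$, and Lagrangian spectral invariants genuinely depend on the Lagrangian (they detect, e.g., the position of $L$ relative to the support of $H$), so no $\mathrm{Symp}(M)$--invariant $\ell$ of this kind exists; taking a supremum over the $\mathrm{Symp}$--orbit of $L$ would destroy spectrality and properties (a)--(b). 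The paper avoids needing (c) altogether: instead of an equality $\ell(H\circ\psi)=\ell(H)$ it proves the one-sided comparison $\ell(a;L_0,L_1;H)\leq\ell([L_0];L_0,L_0;H)$ (Proposition \ref{cor:triangle-inequality-1}, via a non-standard triangle inequality for the product $HF(L_0,L_1)\otimes HF(L_1,L_1)\to HF(L_0,L_1)$), identifies the right-hand side with the \emph{Hamiltonian} invariant $c_+$ on the ambient torus (Proposition \ref{prop:comp-lagr-ham-si} together with the splitting formula), uses that $c_+$ --- not $\ell$ --- is preserved by the symplectic homeomorphism $\phi$, and then upgrades the resulting inequality $c_+(f_R)\leq c_+(f_R\circ\phi_R)$ to an equality by interchanging $f_R$ and $g_R=f_R\circ\phi_R$.

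Property (b) as you state it --- a Lipschitz bound in $\|H-H'\|_{C^0([0,1]\times C)}$ --- is also stronger than what is true or needed. The Floer-theoretic continuity estimate involves $\max_M$ and $\min_M$ of $H-H'$ over all of $M$, and a chord from $L_0$ to $L_1$ for a general $H$ need not stay in (the graph version of) $C$, so its action genuinely depends on $H$ off $C$. What the paper proves (the claim establishing Equation \eqref{eq:8}) is the restriction principle only for Hamiltonians that are, in addition, functions of time on each characteristic leaf of $C$: this forces the relevant chords to remain in $W=\Psi(C\times C)\supset L_1$, and the proof is an interpolation argument --- along $H_r=rG+(1-r)F$ the action spectra coincide, the spectrum has measure zero, and $r\mapsto\ell(H_r)$ is continuous, hence constant. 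This weaker statement suffices for your final step because the two functions being compared there, $\pi^*H\circ\phi$ and $\pi^*(H\circ\phi_R)$, are both leafwise constant on $C$ (the first because, by \cite{HLS13}, $\phi$ preserves the characteristic foliation). So the correct target is the leafwise restriction principle, not the $C^0([0,1]\times C)$ Lipschitz bound, and the correct substitute for $\mathrm{Symp}$--invariance of $\ell$ is the comparison with $c_+$.
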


Note that Theorem \ref{theo:main-theo} implies that other related symplectic invariants which are constructed using spectral invariants are also preserved by $\phi_R$.  Here is one example of such invariants:  Following Viterbo [\cite{viterbo1}, Definition 4.11],  we define the spectral capacity of an open set $U$, denoted by $c(U)$,  by
$$c(U)=\sup\{c_+(H)\,|\, H \in C^0([0,1] \times M) , \; \textrm{support}(H_t)\subset U\ \; \forall t\in [0,1] \},$$ where   $C^0([0,1] \times M)$ denotes the space of time-dependent continuous functions on $M$.  The following is an immediate corollary of Theorem \ref{theo:main-theo}.
\begin{corol}
 The map $\phi_R$, from Theorem \ref{theo:main-theo}, preserves the spectral capacity, i.e. $c(\phi_R(U)) = c(U)$ for any open set $U$.
\end{corol}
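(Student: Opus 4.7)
The plan is to derive this as a direct, almost formal consequence of Theorem \ref{theo:main-theo}, exploiting the fact that $\phi_R \co \m R \to \m R$ is a homeomorphism (note $\m R = \m R'$ here since $\phi(C) = C$ by assumption). The key observation is that pre-composition by $\phi_R$ gives a bijective correspondence between continuous time-dependent Hamiltonians supported in $U$ and those supported in $\phi_R(U)$, and that under this correspondence the invariant $c_+$ is preserved by the main theorem.

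More concretely, first I would note that if $H \in C^0([0,1]\times \m R)$ then $H \circ \phi_R$ is again continuous, with $\mathrm{supp}((H\circ \phi_R)_t) = \phi_R^{-1}(\mathrm{supp}(H_t))$. Hence $H \mapsto H \circ \phi_R$ restricts to a bijection between
\[
\m H_{\phi_R(U)} := \{H \in C^0([0,1] \times \m R) \,:\, \mathrm{supp}(H_t) \subset \phi_R(U) \text{ for all } t\}
\]
and the analogous set $\m H_U$, with inverse $K \mapsto K \circ \phi_R^{-1}$. For any $H \in \m H_{\phi_R(U)}$, Theorem \ref{theo:main-theo} gives $c_+(H \circ \phi_R) = c_+(H)$.

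Taking suprema on both sides of this equality, one obtains
\[
c(\phi_R(U)) = \sup_{H \in \m H_{\phi_R(U)}} c_+(H) = \sup_{H \in \m H_{\phi_R(U)}} c_+(H \circ \phi_R) = \sup_{K \in \m H_U} c_+(K) = c(U),
\]
which is the desired identity. There is no real obstacle: the entire content of the corollary resides in Theorem \ref{theo:main-theo}, and only the bookkeeping regarding supports and the bijective change of variables needs to be checked.
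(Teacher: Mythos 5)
Your argument is correct and is precisely the routine verification the paper omits when calling this an "immediate corollary" of Theorem \ref{theo:main-theo}: the change of variables $H \mapsto H\circ\phi_R$ bijects Hamiltonians supported in $\phi_R(U)$ with those supported in $U$, and the main theorem makes $c_+$ invariant under it, so the suprema coincide.
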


In  Definition 5.15 of \cite{schwarz}, Schwarz defines a very similar capacity which he denotes by $c_\gamma$.  It can easily be checked that $\phi_R$ preserves $c_\gamma$ as well.

\subsection{Main Tools: Lagrangian Floer theory and  spectral invariants} \label{sec:intro_tech}
For proving Theorem \ref{theo:main-theo}, we will use the theory of Lagrangian spectral invariants.  These invariants were first introduced by Viterbo \cite{viterbo1} in the setting of cotangent bundles and using generating functions.  In \cite{Oh97}, Oh reconstructed the same invariants using Lagrangian Floer homology. There have been many developments in the theory since then; see Section \ref{sec:preliminaries-Floer} for specific references.

In this article, we will use Lagrangian Floer homology, in the specific setting where the symplectic manifold and the Lagrangians are all tori, to construct a new class of spectral invariants.  Below, we  describe our settings and give a brief overview of the construction and properties of the particular spectral invariants which will be used in the proof of Theorem \ref{theo:main-theo}.

The symplectic manifold we will be working on is the product
$$M = \bb T^{2k_1} \times \bb T^{2k_1} \times \bb T^{2k_2} \times \bb T^{2k_2}.$$
We denote by  $(q_1, p_1)$ and $(Q_1, P_1)$  the coordinates on the first and second $\bb T^{2k_1}$ factors in the above product, respectively.  The coordinates $(q_2, p_2)$ and $(Q_2, P_2)$ are defined similarly.  We equip $M$ with the standard symplectic structure given by  $$ \omega_{\mathrm{std}}= dq_1 \wedge dp_1 + dQ_1 \wedge dP_1 + dq_2 \wedge dp_2 + dQ_2 \wedge dP_2 \,.$$ 
The Lagrangian submanifolds of $M$ whose Floer homology we will be studying are  
\begin{align*}
  L_0 &= \bb T^{k_1} \times \{ 0 \} \times \bb T^{k_1} \times \{ 0 \} \times \bb T^{k_2} \times \{ 0 \} \times \bb T^{k_2} \times \{ 0 \} \,, \\
  L_1 &= \bb T^{k_1} \times \{ 0 \} \times \bb T^{k_1} \times \{ 0 \} \times \bb T^{k_2} \times \{ 0 \} \times \{ 0 \} \times \bb T^{k_2} \,.
\end{align*}
Notice that both $L_0$ and $L_1$ decompose as  products of smaller Lagrangians, i.e. $L_i = L \times L_i'$, where
\begin{align*}
  L = &\;\bb T^{k_1} \times \{ 0 \} \times \bb T^{k_1} \times \{ 0 \} \times \bb T^{k_2} \times \{ 0 \}  \;\subset\; \bb T^{2k_1} \times \bb T^{2k_1} \times \bb T^{2k_2} \,,\\
  &L_0' =  \bb T^{k_2} \times \{ 0 \} \;\subset\; \bb T^{2k_2},  \mbox{ and }\;   L_1' = \{ 0 \} \times \bb T^{k_2} \;\subset\; \bb T^{2k_2} \,.
 \end{align*}
Observe that $L_0 \cap L_1 = L \times \{ 0 \}.$  In Section \ref{sec:spec-floer-theory}, we will construct an isomorphism between the Morse homology of $L$, denoted by $HM(L)$, and the Floer homology group $HF(L_0, L_1)$;  see Theorem \ref{theo:Floer-hom-L0L1} for a precise statement.  We will then use this isomorphism to associate  a critical value of the Lagrangian action functional $\m A^{L_0,L_1}_H$ to a non-zero class $a \in HM(L)$ and a Hamiltonian $H:[0,1] \times M \rightarrow \R$.  We will denote this critical value by  $$\ell(a; L_0, L_1; H).$$
This is the spectral invariant associated to $a$ and $H$.  Roughly speaking, $\ell(a; L_0, L_1; H)$ is the action value at which the Morse homology class $a$ appears in the Floer homology group $HF(L_0, L_1)$.

\subsubsection*{Main properties of spectral invariants}
\label{sec:main-prop-spectr}

We now list some of the main properties of the spectral invariant $\ell(a; L_0, L_1; H).$

\medskip

\noindent \textbf{1. Spectrality:} Let  $\mathrm{Spec}(H)$ denote the set of critical values of the action functional $\m A^{L_0,L_1}_H.$  Then, for any Hamiltonian $H$ and $a \in HM(L) \setminus \{0\},$
 $$\ell(a; L_0, L_1; H) \in  \mathrm{Spec}(H).$$
For further details, see Section \ref{sec:spectrality}.

\medskip

\noindent \textbf{2. Continuity:} The following inequality holds for any Hamiltonians $H, H'$ 
\begin{align*}
   \int_0^1 \min_M (H_t-H'_t) \,dt \leq |\ell(a;L_0,L_1;H)-&\ell(a;L_0,L_1;H')| \\
&\qquad \leq \int_0^1 \max_M (H_t-H'_t) \,dt \,. 
\end{align*}
For further details, see Sections \ref{sec:lagr-spectr-invar} and \ref{sec:spec-floer-theory}.

\medskip

\noindent \textbf{3. Splitting Formula:} 
Let $F$ and $F'$ denote two Hamiltonians on $\bb T^{2k_1} \times \bb T^{2k_1} \times \bb T^{2k_2}$ and $\bb T^{2k_2}$, respectively.  Define the Hamiltonian $F \oplus F'$ on $M$ by $F \oplus F' (z_1, z_2)= F(z_1) + F'(z_2),$ for $z_1 \in \bb T^{2k_1} \times \bb T^{2k_1} \times \bb T^{2k_2}$ and $z_2 \in  \bb T^{2k_2}.$ 
In Section \ref{sec:prdct_formula_l0l1}, we obtain the following ``splitting'' formula:
$$\ell(a; L_0, L_1; F \oplus F') = \ell(a; L, L ; F) + \ell([\mathrm{pt}]; L_0', L_1'; F'),$$
where $\ell(a; L, L ; F)$ denotes the standard Lagrangian spectral invariant associated to $a \in HM(L)$ and $\ell([\mathrm{pt}]; L_0', L_1'; F')$ denotes the spectral invariant associated to the only non-zero class in $HF(L_0', L_1')$.  See Sections \ref{sec:lagr-spectr-invar} and \ref{sec:SI-in-case-single-lagr} for the definitions of $\ell([\mathrm{pt}]; L_0', L_1'; F')$ and $\ell(a; L, L ; F)$, respectively.  Section \ref{sec:when-l_0cap-l_1} provides further details on $HF(L_0', L_1')$.

\medskip

\noindent \textbf{4. Triangle Inequalities:} Given two Hamiltonians $H, H'$ we denote by $H\#H'$ the Hamiltonian whose flow is the concatenation of the flows of $H$ and $H';$ see Equation (\ref{eq:defn_sharp}) for a precise definition of $H\#H'$.  Consider two Morse homology classes $a, b \in HM(L)$ such that the intersection product $a \cdot b \neq 0$. Lastly, for $i=0,1$, denote by $[L_i']$ the fundamental class in $HM(L_i')$.  Then, the following triangle inequalities hold:
\begin{align*}
\ell(a\cdot b; L_0, L_1; H \# H') \leq  \ell(a ; L_0, L_1; H)+ \ell(b \otimes [L_1']; L_1, L_1;  H'),\\
\ell(a\cdot b; L_0, L_1; H \# H') \leq  \ell(a \otimes [L_0']; L_0, L_0 ;H)  + \ell(b; L_0, L_1;  H').
\end{align*}

\medskip

The first three of the above properties are more or less standard, and in fact, we prove these in a more general setting in Section \ref{sec:Lagr-Floer-theory}.  The fourth property, which is perhaps the most interesting one, is specific to our settings and is different than the triangle inequality which appears  in the standard setting where only one Lagrangian is considered. 

Proofs of triangle inequalities of this nature consist of two main steps. First, one must prove a purely Floer theoretic version of the triangle inequality where Morse homology classes and the Morse intersection product are replaced with their Floer theoretic analogues. We do this, in a more general setting than what is described here in the introduction, in Section \ref{sec:lagr-pdct-triangle-ineq};  see Theorem \ref{theo:triangle_inequality_general}.  The second step  involves establishing a correspondence between the Morse and Floer theoretic versions of the intersection product, the latter being usually referred to as the pair-of-pants product.  It is well-known that when $L_0$ and $L_1$ coincide (and some technical assumptions are satisfied) the two versions of the intersection product coincide up to a PSS-type isomorphism; see Equation \eqref{sec:pop_intersec_prdct}.  In our case, however, such a direct correspondence does not exist; the pair-of-pants product is not even defined on the tensor product of a single ring!
In Theorem \ref{theo: prdct_struct_L0L1} and Remark \ref{rem:full_desc_prdct}, we fully describe the relation between the intersection product on $HM(L)$ and the pair-of-pants products $*: HF(L_0, L_1) \otimes HF(L_1, L_1) \rightarrow HF(L_0, L_1)$ and $*: HF(L_0, L_0) \otimes HF(L_0, L_1) \rightarrow HF(L_0, L_1)$.
  
\subsubsection*{Comparing the two forms of spectral invariants}  
 Using the aforementioned properties of the spectral invariants $ \ell(a; L_0, L_1; H),$ one can deduce several other interesting properties of these invariants.  Here, we will mention a comparison result which plays a significant role in our proof of Theorem \ref{theo:main-theo}.
  
 Denote by $\ell([L_i]; L_i, L_i; H)$ the standard Lagrangian spectral invariant associated to the fundamental class $[L_i] \in HM(L_i)$; see Section \ref{sec:SI-in-case-single-lagr} for the definition.  The triangle inequality allows us to compare the two forms of spectral invariants.  More precisely, we prove the following in Section \ref{sec:traingle_ineq_l0l1}.
  
\begin{prop}\label{cor:triangle-inequality-1}
For $i=0,1,$ denote by $[L_i]$ the fundamental classes in $HM(L_i).$  Then, for any non-zero $a\in HM(L)$ and any Hamiltonian $H$:
$$\ell(a; L_0, L_1; H) \leq \ell([L_i]; L_i, L_i; H).$$
In particular, $\ell([L]; L_0, L_1; H) \leq \ell([L_i]; L_i, L_i; H),$ where $[L] \in HM(L)$ is the fundamental class.
\end{prop}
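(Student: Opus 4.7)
The plan is to chain two inequalities: first, a consequence of the triangle inequality of property~4 with one Hamiltonian set to zero, which compares $\ell(a;L_0,L_1;H)$ to a single-Lagrangian spectral invariant on $L_1$ (resp. $L_0$); and second, the standard monotonicity for single-Lagrangian spectral invariants, which bounds the latter by the spectral invariant of the fundamental class.

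For the case $i=1$, the first triangle inequality applied with Morse classes $[L]$ and $a$ and Hamiltonians $0$ and $H$ reads
$$\ell([L]\cdot a; L_0, L_1; 0\#H) \leq \ell([L]; L_0, L_1; 0) + \ell(a\otimes[L_1']; L_1, L_1; H).$$
Since $[L]$ is the unit of the Morse intersection product on $HM(L)$, and $0\#H$ generates the same Hamiltonian flow as $H$, the left-hand side is $\ell(a;L_0,L_1;H)$. The term $\ell([L];L_0,L_1;0)$ lies in the action spectrum of the zero Hamiltonian by spectrality, and this spectrum reduces to $\{0\}$: the clean intersection $L_0\cap L_1=L\times\{0\}$ lives inside tori on which a primitive of $\omega$ can be chosen to vanish, so a small Morse--Bott perturbation together with the continuity property forces $\ell([L];L_0,L_1;0)=0$. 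This yields $\ell(a; L_0, L_1; H)\leq\ell(a\otimes[L_1']; L_1, L_1; H)$.

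The second step invokes the standard monotonicity $\ell(\alpha; L_1, L_1; H)\leq\ell([L_1]; L_1, L_1; H)$ valid for any nonzero $\alpha\in HM(L_1)$. This is the familiar statement that the fundamental class maximizes the spectral invariant, and follows from Poincar\'e duality for Lagrangian spectral invariants together with the classical single-Lagrangian triangle inequality. Applying it with $\alpha=a\otimes[L_1']$ completes the argument for $i=1$. The case $i=0$ is entirely symmetric: use the second triangle inequality with Morse classes $a$ and $[L]$ and Hamiltonians $H$ and $0$ to obtain $\ell(a;L_0,L_1;H)\leq\ell(a\otimes[L_0'];L_0,L_0;H)$, then apply monotonicity on $HM(L_0)$. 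The ``in particular'' statement is the specialization $a=[L]$, for which $[L]\otimes[L_i']=[L_i]$ and the second step is not needed.

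The main obstacle lies in securely establishing the two standard ingredients invoked above: the vanishing $\ell([L];L_0,L_1;0)=0$ in the clean but non-transverse intersection setting, and the monotonicity of classical Lagrangian spectral invariants via Poincar\'e duality. Both should be available from the general Floer-theoretic framework being developed in Sections~\ref{sec:Lagr-Floer-theory}--\ref{sec:SI-in-case-single-lagr}, so the genuine content of Proposition~\ref{cor:triangle-inequality-1} is the way it packages these tools together with the new splitting-type triangle inequality of property~4 to extract the desired comparison between the two families of spectral invariants.
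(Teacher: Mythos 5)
Your overall strategy is the paper's: specialize the new triangle inequality (Theorem \ref{theo:triangle_ineq_main}) with one Hamiltonian equal to $0$, kill the zero-Hamiltonian term by spectrality, and absorb the reparametrization $0\#H$ via Remark \ref{rem:time_rep}. But you place the classes in the opposite slots from the paper, and this costs you an extra lemma. The paper takes $b=[L]$ and $H=0$ in inequality (1), so the right-hand side contains $\ell([L]\otimes[L_1'];L_1,L_1;H')=\ell([L_1];L_1,L_1;H')$ \emph{directly}, and the proof ends there. You instead put $a$ in the slot that gets tensored with $[L_1']$, obtaining $\ell(a;L_0,L_1;H)\leq\ell(a\otimes[L_1'];L_1,L_1;H)$, and must then invoke monotonicity $\ell(\alpha;L_1,L_1;H)\leq\ell([L_1];L_1,L_1;H)$ for arbitrary nonzero $\alpha\in HM(L_1)$. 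That monotonicity is true and is within reach of the paper's toolbox --- it follows from Theorem \ref{theo:triangle_inequality_general} specialized to $L_0=L_1=L_2=L_i$, Equation \eqref{eq:pop_intersec_prdct}, and Corollary \ref{corol:H=c-then-l=c} applied to $\alpha=[L_i]\cdot\alpha$ and $H=H\#0$ --- but it is nowhere stated or proved in the paper, so you would need to supply it; your appeal to ``Poincar\'e duality'' is unnecessary for this direction and somewhat misleading. Two further soft spots: your justification of $\ell([L];L_0,L_1;0)=0$ via a vanishing primitive of $\omega$ and a Morse--Bott perturbation is muddled ($\omega$ has no primitive on a torus); the clean argument is spectrality together with the observation that every critical point of $\m A_0^{L_0,L_1}$ is a constant path at a point of $L_0\cap L_1=L\times\{0\}$, which can be capped inside this isotropic set, so $\mathrm{Spec}(0)=\{0\}$. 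Rearranging your application of the triangle inequality to match the paper's choice of slots removes both the monotonicity step and any need for the second half of your argument.
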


\begin{remark} In defining the above spectral invariants $\ell(a;L_0,L_1;H)$, we were inspired by the construction of ``conormal spectral invariants'' defined in a cotangent bundle $T^*N$ via consideration of the Lagrangian Floer homology of the zero section $0_N$ and the conormal $\nu^*V$ of a submanifold $V\subset N$ (see e.g. \cite{Oh97}). Indeed, if we heuristically think of the torus $\bb T^{2k_1} \times \bb T^{2k_1} \times \bb T^{2k_2} \times \bb T^{2k_2}$ as a compact version of the cotangent bundle to $\bb T^{k_1} \times \bb T^{k_1} \times \bb T^{k_2} \times \bb T^{k_2}$, then the Lagrangian $L_0$ corresponds to the zero section and $L_1$ corresponds to the conormal bundle of the submanifold $V=\bb T^{k_1} \times \bb T^{k_1} \times \bb T^{k_2} \times \{0\}$. 

Of the above four properties of the spectral invariants $\ell(a;L_0,L_1;H)$, the first three also hold for conormal spectral invariants.  We believe that, by readjusting the techniques used in this paper, one could obtain an appropriately reformulated version of the triangle inequality for conormal spectral invariants.  
This would then lead to the following comparison inequalities, corresponding to Proposition \ref{cor:triangle-inequality-1}: For every homology class $a\in HM(V)$, and every Hamiltonian $H$ on $T^*N$,
$$\ell(a;0_N,\nu^*V;H)\leq \ell([N];0_N,0_N;H).$$
As far as we know, the triangle inequality has not yet been proven for conormal spectral invariants.  However, the above comparison inequalities were proven, via generating-function techniques, in \cite{viterbo1}.

  The idea that conormal spectral invariants could be useful in studying the behavior of spectral invariants under symplectic reduction has been present in works based on generating function theory (e.g.  \cite{theret}, \cite{humiliere1}, \cite{ST}) and goes back to Viterbo \cite{viterbo1}.  
  To the best of our knowledge, this article is the first place where this idea is implemented in Floer theory.  We found this implementation to be necessary for our purposes as Floer theory is better suited for working on compact manifolds.
\end{remark}

\subsection*{Organization of the paper}
In Sections \ref{sec:Lagr-Floer-theory}--\ref{sec:comp-lagr-ham-si} we recall Floer theoretic preliminaries, define Lagrangian and Hamiltonian spectral invariants, and prove some of their essential properties.  In Section \ref{sec:lagr-pdct-triangle-ineq}, we define the pair-of-pants product and prove a purely Floer theoretic version of the triangle inequality in a fairly general setting.  In Section \ref{sec:kunn-form-prod}, we prove a K\"unneth formula for Lagrangian Floer homology and use it to derive a splitting formula for spectral invariants.
In Section \ref{sec:lag_Floer_tori}, we specialize the Floer theory of Section  \ref{sec:preliminaries-Floer} to the specific settings introduced above.  In Section \ref{sec:prdct_triangle_l0l1}, we prove the aforementioned triangle inequality.
Lastly, in Section \ref{sec:proof}, we use the results from Sections \ref{sec:preliminaries-Floer} and \ref{sec:lag_Floer_tori} to prove Theorem \ref{theo:main-theo}.

\subsection*{Aknowledgements}
We are grateful to Claude Viterbo for several helpful conversations.

This work is partially supported by ANR Grants ANR-11-JS01-010-01 and ANR-13-JS01-0008-01.  The research leading to these results has received funding from the European Research Council under the European Union's Seventh Framework Programme (FP/2007-2013) / ERC Grant Agreement  307062. 

%
%

\section{Floer homology and spectral invariants}
\label{sec:preliminaries-Floer}

\subsection{Lagrangian Floer homology}\label{sec:Lagr-Floer-theory}

In this section, we review the construction of Floer homology. Throughout the section, we fix a closed symplectic manifold $(M,\omega)$, two closed non-disjoint connected Lagrangian submanifolds $L_0$, $L_1$ and $p\in L_0\cap L_1$ an intersection point.  Recall that 
\begin{itemize}
\item $(M,\omega)$ is symplectically aspherical if $\omega|_{\pi_2(M)}=0$,
\item a Lagrangian $L$ of $(M,\omega)$ is weakly exact, or the pair $(M,L)$ is relatively symplectically aspherical, if $\omega|_{\pi_2(M,L)}=0$.
\end{itemize}
We say that the pair $(L_0,L_1)$ is \textit{weakly exact with respect to $p$}, if  any disk in $M$ whose boundary is on $L_0\cup L_1$ and is ``pinched'' at $p$ has vanishing symplectic area. More precisely, denote by $D$ the unit disk in $\bb C$ centered at $0$. Denote by $\del D^+$ the upper half of $\del D$, $\del D^+=\{ z\in \bb C : |z|=1, \mathrm{Im}(z) \geq 0  \}$, and by $\del D^-$ its lower half. 
\begin{definition}\label{def:weak_exact}
   The pair of Lagrangians $(L_0, L_1)$ is weakly exact with respect to $p \in L_0\cap L_1$ if for any map $u \co (D,\del D^+,\del D^-,\{-1,1\}) \rightarrow (M,L_0,L_1,\{ p \})$, $\int_{D} u^* \omega =0$.
\end{definition}
Notice that in this case both $L_0$ and $L_1$ are weakly exact and thus $M$ is symplectically aspherical. 

\begin{example}\label{ex:weakly_exact}
The Lagrangians we will consider in Sections \ref{sec:lag_Floer_tori} and \ref{sec:proof} form weakly exact pairs with respect to any point in their intersections. Recall from Section \ref{sec:intro_tech} in the introduction that for $i=1$ and $2$, $L_i = L\times L'_i$ are Lagrangians of $(\bb T^{k}\times \bb T^l,\omega_{\bb T^k}\oplus \omega_{\bb T^l})$ so that $\bb T^l=L'_0 \times L'_1$ and $L'_0 \cap L'_1 = \{ 0 \}$. (In this example only, $k$ and $l$ denote the respective integers $4k_1+2k_2$ and $2k_2$ to ease the reading.) We fix a point $p=(p_k,0)$ in $L_0 \cap L_1$. 

First notice that since $L$ is a subtorus of $\bb T^k$, $\pi_2(\bb T^k,L)=0$ so that $(L,L)$ is a weakly exact pair with respect to $p_k$. 

Next, consider a pinched disk
$$u \co (D,\del D^+,\del D^-,\{-1,1\}) \rightarrow (\bb T^l, L'_0 \times \{ 0 \},  \{ 0 \} \times L'_1,\{ 0 \}) \,.$$
Denote by $\gamma_0$ and $\gamma_1$ the loops respectively in $L'_0$ and $L'_1$, defined by $u(\del D^-)=\gamma_0 \times \{ 0 \}$ and $u(\del D^+)= \{ 0 \} \times \gamma_1$. Since $[u(\del D^-)]=[u(\del D^+)] \in \pi_1(\bb T^l)=\pi_1(L'_0) \times \pi_1(L'_1)$, $\gamma_0$ and $\gamma_1$ are null-homotopic in $L'_0$ and $L'_1$ respectively. By gluing to $u$ two disks $v_i \subset L'_i$ bounding $\gamma_i$, we obtain a sphere in $\bb T^l$ whose symplectic area necessarily vanishes. Since the $v_i$'s are included in Lagrangians we deduce that $\omega_{\bb T^l}(u)=0$. Therefore $(L'_0,L'_1)$ is weakly exact with respect to the single intersection point $0$.

Finally, since the product of weakly exact pairs is weakly exact, we deduce that $(L_0,L_1)$ is weakly exact with respect to $p$.
\end{example}

Let $H \co [0,1] \times M \rightarrow \bb R$ be a smooth Hamiltonian function. We will denote by $X_H$ the 1--parameter family of vector fields induced by $H$ by $\omega(X_H^t, \cdot\,)=-dH_t$ for all $t$ and by $\phi_H^t$ its flow satisfying: $\phi_H^0=\id$ and for all $t$, $\del_t \phi_H^t=X_H^t(\phi_H^t)$. We first consider a \textit{non-degenerate} Hamiltonian, which means in this case that the intersection $\phi_H^1(L_0)\cap L_1$ is transverse. A generic Hamiltonian is non-degenerate.

We denote by $\Omega(L_0,L_1;p)$ the set of paths $x$ from $L_0$ to $L_1$ which are in the connected component of the constant path $p$. Such a path admits a capping $\bar x \co [0,1]\times[0,1] \rightarrow M$ so that: For all $t \in [0,1]$, $\bar x(0,t)=p$ and $\bar x(1,t)=x(t)$, $[0,1]\times \{0\}$ is mapped to $L_0$ and $[0,1]\times \{1\}$ to $L_1$.

Two cappings $\bar x_1$ and $\bar x_2$ of $x \in \Omega(L_0,L_1;p)$ have the same symplectic area since $\bar x_1 \# (-\bar x_2)$ is a pinched disk as defined above so that it has area 0 by assumption. (Recall that $-\bar x_2$ stands for $\bar x_2$ with reverse orientation.) Thus we can define the action functional by the formula:
\begin{align}
\label{eq:action-funct}
  \m A^{L_0,L_1}_H \co \Omega(L_0,L_1;p) \rightarrow \bb R \,, \qquad x \mapsto -\int \bar{x}^*\omega + \int_0^1 H_t(x(t)) \,dt \,.
\end{align}

The critical points of $\m A^{L_0,L_1}_H$ are paths $x \in \Omega(L_0,L_1;p)$ which are orbits of $H$ that is for all $t$, $x(t)=\phi_H^t(x(0))$. These orbits are in one-to-one correspondence with $\phi_H^1(L_0)\cap L_1$ so that their number is finite since $H$ is non-degenerate and $M$ compact. One defines the Floer complex as the $\bb Z_2$--vector space $CF(L_0,L_1;p;H)=\langle \Crit(\m A^{L_0,L_1}_H) \rangle_{\bb Z_2}$. The set of critical values of $\m A^{L_0,L_1}_H$ is called its spectrum and is denoted by $\mathrm{Spec}(H)$. 

Now Floer's differential is defined thanks to perturbed pseudo-holomorphic strips: we pick a 1--parameter family of tame, $\omega$--compatible, almost complex structures $J$. We define the set of Floer trajectories between two orbits of $H$, $x_-$ and $x_+$, as 
\begin{align*}
  \m{\widehat{M}}^{L_0,L_1}(x_-,x_+;H,J) = \left\{\! u \co \bb R \times [0,1] \rightarrow M \left|\!
      \begin{array}{l}   \del_s u + J_t(u)(\del_t u - X_H^t(u))=0 \\ \forall t,\, u(\pm\infty,t)= x_\pm(t) \\ u(\bb R \times \{ 0 \}) \subset L_0 \\ u(\bb R \times \{ 1 \}) \subset L_1 \end{array}
\!\right. \!\!\right\}
\end{align*}
where the limits $u(\pm\infty,t)$ are uniform in $t$. There is an obvious $\bb R$--action by reparametrization $s \mapsto s+\tau$ and we define $\m M^{L_0,L_1}(x_-,x_+;H,J)$ as the quotient $\m{\widehat{M}}^{L_0,L_1}(x_-,x_+;H,J)/\bb R$.

Requiring that the pair $(H,J)$ is \emph{regular}, that is the linearization of the operator $\overline{\del}_{J,H} \co u \mapsto \del_s u+J_t(u)(\del_t u - X_H(u))$ is surjective for all $u \in \widehat{\m M}^{L_0,L_1} (x_-,x_+;H,J)$, ensures that $\m M^{L_0,L_1}(x_-,x_+;H,J)$ is a smooth manifold.  We denote its 0-- and 1--dimensional components respectively by $\m M^{L_0,L_1}_{[0]}(x_-,x_+;H,J)$ and $\m M^{L_0,L_1}_{[1]}(x_-,x_+;H,J)$.

\begin{remark}
 It turns out that we do not need to consider \textit{graded} complexes. As a consequence, we do not mention the different standard indices usually entering into play in such theories. In particular, we do not require additional assumptions concerning these indices. 

 Nonetheless, let us recall that the $(i+1)$--dimensional component of $\widehat{\m M}^{L_0,L_1}(x_-,x_+;H,J)$ consists of those Floer trajectories whose Maslov--Viterbo index equals $i+1$, see e.g \cite{Au13}. When there are no such trajectories, we put $\m M^{L_0,L_1}_{[i]}(x_-,x_+;H,J)$ to be the emptyset.
\end{remark}

The 0--dimensional component of the moduli space of all Floer trajectories running between any two orbits, $\m M^{L_0,L_1}_{[0]}(H,J)= \cup_{x_-,x_+} \m M^{L_0,L_1}_{[0]}(x_-,x_+;H,J)$ is compact. Floer's differential is defined by linearity on $CF(L_0,L_1;p;H)$ after setting the image of a generator as
\begin{align*}
  \del^{L_0,L_1}_{H,J} (x_-) = \sum_{x_+} \# \m M^{L_0,L_1}_{[0]}(x_-,x_+;H,J) \cdot x_+
\end{align*}
where $\# \m M$ is the mod 2 cardinal of $\m M$ and the sum runs over all orbits $x_+$. Since the asphericity assumption prevents bubbling of disks and spheres, by Gromov's compactness Theorem and standard gluing $\m M^{L_0,L_1}_{[1]}(H,J)$, the 1--dimensional component of $\m M^{L_0,L_1}(H,J)$, can be compactified, and this fact ensures that $(\del^{L_0,L_1}_{H,J})^2=0$ that is, $\del^{L_0,L_1}_{H,J}$ is a differential.

The Floer homology of the pair $(L_0,L_1)$ is the homology of this complex $HF(L_0,L_1;p;H,J)=H(CF(L_0,L_1;p;H),\del^{L_0,L_1}_{H,J})$. Because it is often useful to keep in mind the specific Floer data which we used to define the complex, we will keep $(H,J)$ in the notation, however the homology does not depend on the choice of the regular pair $(H,J)$.\footnote{This being said, when there is no risk of confusion we will denote $HF(L_0,L_1;p;H,J)$ by $HF(L_0,L_1;p)$ to simplify the notation.} Indeed, there are morphisms
\begin{align*}
  \Psi_{H,J}^{H',J'} \co CF(L_0,L_1;p;H) \rightarrow CF(L_0,L_1;p;H')
\end{align*}
inducing isomorphisms in homology which are called continuation isomorphisms. Roughly, $\Psi_{H,J}^{H',J'}$ is defined thanks to a regular homotopy between $(H,J)$ and $(H',J')$, $(\tilde H,\tilde J)$, by considering the 0--dimensional component of the moduli space of Floer trajectories for the pair $(\tilde H,\tilde J)$ running from an orbit of $H$ to an orbit of $H'$ with boundary condition on $L_0$ and $L_1$ respectively. It is also standard---and the proof is based on the same principle by considering a homotopy between homotopies---that $\Psi_{H,J}^{H',J'}$ does not depend on the choice of the homotopy $(\tilde H,\tilde J)$. From these facts, one gets that they are ``canonical'', that is they satisfy
\begin{align}\label{eq:compos_cont_maps}
  \Psi_{H,J}^{H,J}=\id \quad \mbox{and} \quad \Psi_{H,J}^{H',J'}\circ \Psi_{H',J'}^{H'',J''}=\Psi_{H,J}^{H'',J''}
\end{align}
for any three regular pairs $(H,J)$, $(H',J')$, and $(H'',J'')$. \\

We now present two situations which will be of particular interest to us and in which one can actually compute Floer homology.

\subsubsection{The case of a single Lagrangian}
\label{sec:when-l_0=l_1}

Assume that $L_0$ and $L_1$ coincide and denote $L=L_0=L_1$. Assume moreover that $L$ is connected. In that case, the assumption that the pair $(L,L)$ is weakly exact with respect to any given point $p \in L$ is equivalent to requiring the Lagrangian $L$ to be weakly exact.

 It is well-known that there exists an isomorphism between the Floer homology of $(L,L)$ and the Morse homology of $L$, called PSS isomorphism. It was defined in the Hamiltonian setting by Piunikhin--Salamon--Schwarz \cite{PSS}, then adapted to Lagrangian Floer homology by Kati\'c--Milinkovi\'c \cite{KaticMilinkovic} for cotangent bundles, and by Barraud--Cornea \cite{BarraudCornea06} and Albers \cite{Albers} for compact manifolds. For details, we refer to Leclercq \cite{Leclercq08} which deals with weakly exact Lagrangians in compact manifolds which is the situation we are interested in here.

The PSS morphism requires an additional choice of a Morse--Smale pair $(f,g)$, consisting of a Morse function $f \co L \rightarrow \bb R$ and a metric $g$ on $L$. It is defined at the chain level $\Phi^{L}_{H,J} \co CM(L;f,g) \rightarrow CF(L,L;p;H, J)$ by counting the number of elements of suitable moduli spaces. It commutes with the differential and thus induces a morphism in homology:
\begin{align*}
   \Phi^{L}_{H,J} \co HM(L) \rightarrow HF(L,L;p;H, J)
\end{align*}
(as the notation suggests, we will omit the Morse data). It is an isomorphism and its inverse $\Phi_{L}^{H,J} \co HF(L,L;p; H, J) \rightarrow HM(L)$ is defined in the same fashion. The main properties of the PSS morphism which will be needed are the following two:
\begin{enumerate}
  \item PSS morphism commutes with continuation morphisms, that is
    \begin{align}
    \begin{split}
    \label{eq:PSS-comm-continuation}
        \xymatrix@C=1.2cm{\relax
           HM(L) \ar[r]^{\hspace{-.8cm}\Phi^{L}_{H,J}} \ar[rd]_{\Phi^{L}_{H',J'}} &  HF(L,L;p;H,J) \ar[d]^{\Psi_{H,J}^{H',J'}} \\
          & HF(L,L;p;H'J') 
        }
    \end{split}
    \end{align}
    commutes for any two regular pairs $(H,J)$ and $(H',J')$.
  \item PSS morphism intertwines the Morse and Floer theoretic versions of the intersection product in homology, the latter being known as pair-of-pants product, see subsection \ref{sec:pop_intersec_prdct} for the precise statement.
\end{enumerate}

\subsubsection{The case of two Lagrangians intersecting transversely at a single point}
\label{sec:when-l_0cap-l_1}
When $L_0$ and $L_1$ intersect transversely at a single point $p$, the Hamiltonian $H=0$ is non-degenerate. The associated Floer complex obviously has a single generator, $p$ itself. Moreover, for any choice of almost complex structure $J$ such that $(0,J)$ is regular, the boundary map is trivial since the 0--dimensional component of the moduli space of Floer trajectories from $p$ to itself is empty.
 It follows that $HF(L_0, L_1;p;0,J)$, and hence $HF(L_0, L_1;p;H,J)$ for any regular $(H,J)$, is isomorphic to the group with two elements. We will refer to this isomorphism, which is uniquely defined, as a PSS-type morphism and will denote it by
\begin{align*}  
  \Phi^{L_0,L_1}_{H,J}:\Z_2\to HF(L_0,L_1;p;H,J).
\end{align*}
 The only non-zero class in $HF(L_0, L_1;p;H,J)$ will be denoted by $[\mathrm{pt}]$. 

Since there exists only one isomorphism between two given groups with two elements, the following diagram commutes for any two regular pairs $(H,J)$ and $(H',J')$:
\begin{align}
    \begin{split}
    \label{blaaah}
        \xymatrix@C=1.2cm{\relax
           \Z_2 \ar[r]^{\hspace{-1.4cm}\Phi^{L_0, L_1}_{H,J}} \ar[rd]_{\hspace{-1.6cm}\Phi^{L_0, L_1}_{H',J'}} &  HF(L_0,L_1;p;H,J) \ar[d]^{\Psi_{H,J}^{H',J'}} \\
          & HF(L_0,L_1;p;H',J') 
        }
    \end{split}
    \end{align}

\subsection{Lagrangian spectral invariants}
\label{sec:lagr-spectr-invar}

Spectral invariants for Lagrangians in cotangent bundles were introduced by Viterbo \cite{viterbo1} using generating functions. This was adapted to Floer homology by  Oh \cite{Oh97}. Since then there have been several extensions to other settings. See in particular Leclercq \cite{Leclercq08} for a single weakly-exact Lagrangian and Zapolsky \cite{zapolsky} for a weakly exact pair of Lagrangians intersecting at a single point. 

We provide below a new extension of the definition for a general weakly-exact pair $(L_0,L_1)$ with a given intersection point $p$.

To give this definition, the starting observation is the standard fact that for every Floer trajectory $u\in \m M^{L_0,L_1}(x_-,x_+;H,J)$, 
$$\m A_H^{L_0,L_1}(x_-)-\m A_H^{L_0,L_1}(x_+)=\int_{\R\times[0,1]}\|\partial_su\|^2dsdt \geq 0,$$
where $\|\cdot\|$ is the norm associated to the metric $\omega(\cdot,J\cdot)$. Thus the action decreases along Floer trajectories. 
Now let $H$ be a non-degenerate Hamiltonian and let $a\in\R$ be a regular value of the action functional, i.e. $a\notin\mathrm{Spec}(H)$. It follows from this observation that if $CF^a(L_0,L_1;p;H)$ denotes the $\Z_2$--vector space generated by Hamiltonian chords of action $<a$, then $CF^a(L_0,L_1;p;H)$
is a subcomplex of $CF(L_0,L_1;p;H)$. We denote $i^a:HF^a(L_0,L_1;p;H,J) \to HF(L_0,L_1;p;H,J)$ the map induced in homology by the inclusion.
For every non-zero Floer homology class $\alpha\in HF(L_0,L_1;p;H,J)$, we define the spectral invariant associated to $\alpha$ to be the number
\begin{align}\label{def:lag_spec_inv_1}
\ell(\alpha;L_0,L_1;p;H)= \inf \{ a \in \bb R : \alpha \in \mathrm{im}(i^a) \} \,.
\end{align}

We wish to have the ability to compare the spectral invariants of different Hamiltonians. For this purpose it will be convenient to fix a reference Floer homology group: pick a regular pair $(\Href,\Jref)$ and set $$HF_{\mathrm{ref}}(L_0,L_1;p)=HF(L_0,L_1;p;\Href,\Jref) \,.$$

\begin{definition}\label{def:lag_spec_inv}  For every non-degenerate Hamiltonian function $H$, the spectral invariant associated to $\alpha\in HF_{\mathrm{ref}}(L_0,L_1;p)$, $\alpha \neq 0$, is the number
$$\ell(\alpha;L_0,L_1;p;H):=\ell(\Psi_{\Href,\Jref}^{H,J}(\alpha);L_0,L_1;p;H) \,.$$
\end{definition}

As the notation suggests, the number $\ell(\alpha;L_0,L_1;p;H)$, both in the above definition and in Equation \eqref{def:lag_spec_inv_1}, does not depend on the necessary choice of an almost complex structure $J$ so that the pair $(H,J)$ is regular. This follows from the following inequality which holds for every two regular pairs $(H,J)$, $(H',J')$:
\begin{align}\label{eq:ineq-PSS}
\begin{split}
\ell(\Psi_{\Href,\Jref}^{H',J'}(\alpha);L_0,L_1;p;H') &\leq \ell(\Psi_{\Href,\Jref}^{H,J}(\alpha);L_0,L_1;p;H) \\
&\qquad\qquad\qquad + \int_0^1 \max_M (H'_t-H_t) \, dt  \,.
\end{split}
\end{align} 
We now sketch a proof of the above inequality. Since the continuation morphism is injective, the image of any non-zero class $\alpha \in HF(L_0,L_1;p;H,J)$ is non-zero. By definition of $\Psi_{H,J}^{H',J'}$ there exist Floer trajectories for a homotopy $(\tilde{H},\tilde{J})$ between the generators of $CF(L_0,L_1;p;H)$ whose linear combination represents $\alpha$ and the generators of $CF(L_0,L_1;p;H')$ representing $\Psi_{H,J}^{H',J'}(\alpha)$. Computing the energy of such a trajectory and using the fact that the result is positive yields:
$$ \ell(\Psi_{H,J}^{H',J'}(\alpha);L_0,L_1;p;H') \leq \ell(\alpha;L_0,L_1;p;H) + \int_0^1 \max_M (H'_t-H_t) \, dt  \,.$$
Thus, in particular Inequality \eqref{eq:ineq-PSS} follows.

Furthermore, Inequality (\ref{eq:ineq-PSS}) implies that for every non-degenerate $H$, $H'$, 
\begin{align}\label{eq:unif_continuity} \begin{split}
   \int_0^1 \min_M (H_t-H'_t) \,dt &\leq |\ell(\alpha;L_0,L_1;p;H)-\ell(\alpha;L_0,L_1;p;H')| \\
&\qquad\qquad\qquad\qquad\qquad \leq \int_0^1 \max_M (H_t-H'_t) \,dt \,. \end{split}
\end{align}
As a consequence, the number $\ell(\alpha;L_0,L_1;p;H)$ is Lipschitz continuous with respect to the Hamiltonian $H$.  Moreover, it follows that $\ell(\alpha;L_0,L_1;p;H)$ can be defined by continuity for every continuous function $H:[0,1]\times M\to\R$.

\subsubsection{Spectrality}
\label{sec:spectrality}

It is rather easy in our situation (where one does not need to keep track of cappings) to prove the \textit{spectrality property} of the invariants $\ell(\alpha;L_0,L_1;p;H)$ regardless the non-degeneracy of $H$. Namely, for all non-zero $\alpha \in HF_{\mathrm{ref}}(L_0,L_1;p)$, 
$$\ell(\alpha;L_0,L_1;p;H) \in \mathrm{Spec}(H) \,.$$
We will need the following consequence of this property (we refer to \cite[Lemma 2.2]{MVZ12} for a proof).
\begin{corol}\label{corol:H=c-then-l=c}
  Let $L$ be a weakly exact closed Lagrangian of $(M,\omega)$ and $H \co [0,1] \times M \rightarrow \bb R$ be continuous. If $H|_L = c$ for some $c \in \bb R$, then $\ell(\alpha;L,L;p;H) = c$ for all $\alpha \neq 0$ in $HF(L,L;p)$.
\end{corol}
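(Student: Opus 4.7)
The plan is to establish the corollary in three stages: reduce to smooth Hamiltonians using the $C^0$-Lipschitz estimate \eqref{eq:unif_continuity}, use the Lagrangian condition to show every Hamiltonian chord of such an $H$ has action exactly $c$, and then pass to the spectral invariant via a non-degenerate perturbation together with spectrality.

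Since $H\mapsto \ell(\alpha;L,L;p;H)$ is $C^0$-Lipschitz and any continuous $H$ with $H|_L=c$ is a uniform limit of smooth $H_n$ with $H_n|_L = c$ (mollify, then correct the boundary value using a cutoff in a tubular neighborhood of $L$), I would first reduce to the smooth case. For smooth $H$ with $H|_L=c$, the identity $\omega(X_H,v) = -dH(v) = 0$ for every $v\in TL$, together with $L$ being Lagrangian, forces $X_H \in TL^\omega = TL$ along $L$, so $\phi^t_H$ preserves $L$. Consequently, every Hamiltonian chord from $L$ to $L$ lies entirely in $L$, and a capping may be chosen with boundary in $L$; weak exactness of $L$ then yields $\int\bar x^*\omega = 0$, so
$$\m A^{L,L}_H(x) = \int_0^1 H_t(x(t))\,dt = c.$$

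This $H$ is however degenerate: $\phi^1_H(L) = L$ is not transverse to $L$, so spectrality cannot be invoked directly. I would therefore pick a Morse function $f:L\to\R$, extend it smoothly to $\tilde f:M\to\R$, and consider the perturbations $H_\epsilon := H + \epsilon \tilde f$. For generic small $\epsilon>0$, $H_\epsilon$ is non-degenerate, and since the chord space of $H$ is the clean submanifold $L$, a standard Morse--Bott perturbation argument identifies the Hamiltonian chords of $H_\epsilon$ from $L$ to $L$ with $\mathrm{Crit}(f)$, each chord staying $C^0$-close to $L$ with action $c + \epsilon f(q) + O(\epsilon^2)$. Hence $\mathrm{Spec}(H_\epsilon) \subset [c - C\epsilon,\, c + C\epsilon]$ for some $C>0$. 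Spectrality then yields $\ell(\alpha;L,L;p;H_\epsilon) \in \mathrm{Spec}(H_\epsilon)$, and the Lipschitz estimate $|\ell(\alpha;L,L;p;H) - \ell(\alpha;L,L;p;H_\epsilon)| \leq \epsilon \|\tilde f\|_{C^0}$ forces $\ell(\alpha;L,L;p;H) = c$ as $\epsilon\to 0$.

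The hard part will be the Morse--Bott perturbation analysis: one must verify rigorously that the chords of $H_\epsilon$ remain close to $L$ and that both their energy and Hamiltonian contributions to the action concentrate at $c$. This rests on cleanness of the chord space of $H$ (which equals $L$) together with implicit function theorem and continuity arguments for the action functional, after which the conclusion is immediate from spectrality and the Lipschitz bound.
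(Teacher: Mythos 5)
Your core computation is exactly the right one, and it is the content of the lemma the paper cites for this corollary (\cite[Lemma 2.2]{MVZ12}): for smooth $H$ with $H|_L=c$ one has $X_H\in TL^\omega=TL$ along $L$, so $\phi_H^t$ preserves $L$, every generator of the complex is a chord contained in $L$, any capping of such a chord has boundary entirely on $L$ and hence zero area by weak exactness, and therefore $\Spec(H)=\{c\}$; the reduction of the continuous case to this one via mollification and the Lipschitz estimate \eqref{eq:unif_continuity} is also fine. Where you diverge from the paper is the last step. The paper's spectrality property is explicitly asserted to hold ``regardless the non-degeneracy of $H$'' (Section \ref{sec:spectrality}), so once $\Spec(H)=\{c\}$ is known the conclusion $\ell(\alpha;L,L;p;H)=c$ is immediate, with no perturbation needed. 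Your Morse--Bott detour is a legitimate way to avoid invoking degenerate spectrality, but as written it is the one step you do not actually prove, and the claim you reach for (a bijection between chords of $H_\eps$ and $\Crit(f)$ with an $O(\eps^2)$ action expansion) is stronger than what you need. If you want to keep the perturbation route, it suffices to observe that for small $\eps$ every chord of $H_\eps$ in the component of $p$ starts on $L$ and stays $C^0$-close to $L$ (since $\phi^t_{H_\eps}$ is $C^0$-close to $\phi^t_H$, which preserves $L$); projecting such a chord to $L$ in a tubular neighborhood produces a capping whose area is $o(1)$, and the Hamiltonian term is $c+O(\eps)$, so $\Spec(H_\eps)$ converges to $\{c\}$ without any clean-intersection or implicit-function analysis. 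Either shortcut closes the argument; as submitted, the Morse--Bott identification is the only genuinely unfinished point.
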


We end this subsection by recalling that $\Spec(H)$, for any Hamiltonian $H$, is a measure zero subset of $\R$.  This fact will be used in Section \ref{sec:proof}.

\subsubsection{Naturality}
\label{sec:naturality}

Lagrangian Floer homology is \textit{natural} in the sense that for any symplectomorphism $\psi \co (M,\omega) \rightarrow (M',\omega')$ and any two Lagrangians $L_0$ and $L_1$ of $(M,\omega)$, the following Floer homologies are isomorphic 
\begin{align}
  \label{eq:naturality-HF}
  HF(L_0,L_1;p;H,J) \simeq HF(\psi(L_0),\psi(L_1);\psi(p);H\circ \psi^{-1},(\psi^{-1})^* J)
\end{align}
since the respective complexes as well as the respective moduli spaces involved in the definition of the differential are in one-to-one correspondence. This one-to-one correspondence is given on the generators of the complex by the obvious identification
 \begin{align*}
   x \in \mathrm{Crit}\big(\m A_H^{L_0,L_1}\big) & \Leftrightarrow   \psi(x) \in \mathrm{Crit}\big(\m A_{H\circ \psi^{-1}}^{\psi(L_0),\psi(L_1)}\big) 
 \end{align*}
where $\psi(x)$ denotes the orbit of $H\circ \psi^{-1}$ given as $t \mapsto \psi(x(t))$. Furthermore, the above bijection preserves the action, namely
\begin{align*}
  \forall x \in \mathrm{Crit}\big(\m A_H^{L_0,L_1}\big), \quad  \m A_H^{L_0,L_1}(x) = \m A_{H\circ \psi^{-1}}^{\psi(L_0),\psi(L_1)}(\psi(x)) \,.
\end{align*}

From this, it is easy to see that the respective Lagrangian spectral invariants coincide: For any non-zero Floer homology class $\alpha$ in $HF(L_0,L_1;p;H,J)$ and its image via \eqref{eq:naturality-HF}, $\alpha_\psi$ in $HF(\psi(L_0),\psi(L_1);\psi(p);H\circ \psi^{-1},(\psi^{-1})^* J)$, we have
\begin{align}
\label{eq:naturality-SI}
  \ell(\alpha;L_0,L_1;p;H) = \ell(\alpha_\psi;\psi(L_0),\psi(L_1);\psi(p);H\circ \psi^{-1}) \,.
\end{align}

\subsubsection{The case of a single Lagrangian}
\label{sec:SI-in-case-single-lagr}

In the particular situation of Section \ref{sec:when-l_0=l_1} where we consider a single Lagrangian $L$ ($=L_0=L_1$), one can easily associate spectral invariants not only to Floer homology classes of $(L,L)$ but also to (Morse) homology classes of $L$ via the PSS isomorphism. For convenience, we denote these invariants in the same way: To any $a \neq 0$ in $HM(L)$, we associate
\begin{align}  \label{eq:SI-L0=L1}
  \ell(a;L,L;H) = \ell(\Phi_{H,J}^L(a);L,L;p;H) 
\end{align}
with $p$ any point in $L$ and the right-hand side defined by \eqref{def:lag_spec_inv_1}. 

As in the general case, this quantity requires the additional choice of an almost complex structure $J$ such that $(H,J)$ is regular, it is Lipschitz continuous with respect to $H$, so that it is independent of the choice of $J$ and its definition naturally extends to any continuous $H \co [0,1]\times M  \rightarrow \bb R$.

The naturality \eqref{eq:naturality-SI} of spectral invariants also holds in this case. More precisely, it reads 
\begin{align}
\label{eq:naturality-SI-2}
  \ell(a;L,L;p;H) = \ell(\psi_*(a);\psi(L),\psi(L);\psi(p);H\circ \psi^{-1})
\end{align}
for all non-zero homology classes $a$ of $L$ and all symplectomorphisms $\psi$. Here $\psi_*(a)$ denotes the image of $a$ by the morphism induced by $\psi|_L$ on $HM(L)$. Indeed, to see that this holds one should pick a Morse--Smale pair $(f,g)$ on $L$, and use $(f\circ \psi^{-1},(\psi^{-1})^* g)$ as Morse--Smale pair on $\psi(L)$. For these choices, the following diagram commutes:
\begin{align*} 
  \xymatrix{\relax
    HM(L;f,g) \ar[r]^{\hspace{-1.3cm}\psi_*=(\psi|_L)_*} \ar[d]_{\Phi^L_{H,J}}  & HM(\psi(L);f\circ \psi^{-1},(\psi^{-1})^* g) \ar[d]^{\Phi^{\psi(L)}_{H\circ \psi^{-1},(\psi^{-1})^*J}} \\
    HF(L,L;p;H,J) \ar[r]_{\hspace{-2.0cm}\psi_*} & HF(\psi(L),\psi(L);\psi(p);H\circ \psi^{-1},(\psi^{-1})^* J)
  }
\end{align*}  
even at the chain level (this is a mild generalization of \cite[Lemma 5.1]{HLL11} where $\psi$ was assumed to be a Hamiltonian diffeomorphism preserving $L$). The fact that spectral invariants do not depend on the Morse data then leads to \eqref{eq:naturality-SI-2}.\\

Finally, in the case of a single Lagrangian one spectral invariant will be of particular interest to us, namely the one associated to $[L]$, the fundamental class of $L$: $\ell([L];L,L;H)$. 

\subsection{Hamiltonian Floer theory, spectral invariants, and capacity}
\label{sec:hamilt-floer-theory}

\subsubsection{Hamiltonian Floer homology}
\label{sec:hamilt-floer-homol}

We work in a symplectically aspherical manifold $(M,\omega)$. Formally, this case is very similar to the Lagrangian case of section \ref{sec:when-l_0=l_1}.

Namely, we pick a Hamiltonian $H$ which is non-degenerate in the sense that the graph of $\phi_H^1$, $\Gamma_{\phi_H^1}$, intersects transversely the diagonal $\Delta \subset M\times M$. 

Instead of $\Omega(L,L;p)$, we consider the set of contractible free loops in $M$. We denote this set by $\Omega(M)$ and a typical element by $\gamma$. The action functional $\m A_H \co \Omega(M) \rightarrow \bb R$ is defined by the same formula as \eqref{eq:action-funct} except that for $\gamma \in \Omega(M)$, $\bar\gamma$ denotes a capping of $\gamma$, that is a disk in $M$ whose boundary is mapped to the image of $\gamma$. Again, the asphericity condition ensures that $\m A_H$ is well-defined.

Its critical points are the contractible 1--periodic orbits of $H$ which form a finite set by genericity of $H$ and generate a $\bb Z_2$--vector space which we denote $CF(M;H)$. 

We again pick a 1--parameter family of tame, $\omega$--compatible, almost complex structures $J$ and consider the moduli spaces:
\begin{align*}
  \m{\widehat{M}}(\gamma_-,\gamma_+;H,J) = \left\{\! u \co \bb R \times S^1 \rightarrow M \left|\!
      \begin{array}{l}   \del_s u + J_t(u)(\del_t u - X_H^t(u))=0 \\ \forall t,\, u(\pm\infty,t)= \gamma_\pm(t)  \end{array}
\!\right. \!\!\right\}
\end{align*}
and their quotient by the obvious $\bb R$--reparametrization in $s$ which we denote $\m M(\gamma_-,\gamma_+;H,J)$. These moduli spaces share the same properties as their Lagrangian counterpart and the differential is defined accordingly:
\begin{align*}
  \del_{H,J}(\gamma_-) = \sum_{\gamma_+} \# \m M_{[0]}(\gamma_-,\gamma_+;H,J) \cdot \gamma_+
\end{align*}
on generators and extended by linearity. Again, the sum runs over all contractible 1--periodic orbits of $H$ and $\m M_{[0]}$ is the 0--dimensional component of the moduli space $\m M$. 

The Floer homology of $(M,\omega)$ is defined as the homology of this complex $HF(M)=H(CF(M;H), \del_{H,J})$ and does not depend on the choice of the regular pair $(H,J)$ in the sense that there are continuation isomorphisms defined in the exact same fashion as in the Lagrangian case and built via regular homotopies of the data. When $H$ is $C^2$--small enough, the Floer complex coincides with the Morse complex of $M$.

Finally, there is also a PSS morphism constructed from a regular pair $(H,J)$ and a Morse--Smale pair $(f,g)$ on $M$ similarly to its Lagrangian counterpart. As for the latter, we will omit the Morse data and denote it $\Phi_{H,J} \co HM(M) \rightarrow HF(M;H,J)$.

\subsubsection{Hamiltonian spectral invariants}
\label{sec:hamilt-spectr-invar}

This case corresponds to the one studied by Schwarz in \cite{schwarz}. As in the Lagrangian case described above, the Floer complex is naturally filtered by action values since the action functional decreases along Floer trajectories. So any regular value of the Hamiltonian action $\m A_H$ gives rise to a subcomplex $CF^a(M;H)  \stackrel{i^a}{\hookrightarrow} CF(M;H)$ and the Hamiltonian spectral invariants are defined for any non-zero Floer homology class of $M$ as in \eqref{def:lag_spec_inv_1}. Thanks to the PSS isomorphism, one can also associate spectral invariants to any non-zero (Morse) homology class of $ \alpha \in HM(M)$ as in Section~\ref{sec:SI-in-case-single-lagr}.  We will temporarily use the notation  $c(\alpha;H,J)$ to denote these invariants.

These invariants share similar properties with their Lagrangian counterparts.  In particular they satisfy a Lipschitz estimate similar to  \eqref{eq:unif_continuity}.   It follows that they are independent of the choice of almost complex structure and hence we will denote them $c(\alpha;H)$.  Furthermore, being Lipschitz continuous, $c(\alpha; H)$ extends to continuous functions on $[0,1]\times M$, i.e. for a continuous $H \in C^0([0,1]\times M)$ we can define $c(\alpha;H) = \lim_{i \to \infty} c(\alpha;H_i)$ where $H_i$ is any sequence of smooth non-degenerate Hamiltonians converging to $H$.

As in \ref{sec:SI-in-case-single-lagr}, one of these invariants will be of greatest interest to us, $c_+(H)=c([M];H)$, the Hamiltonian spectral invariant associated to the fundamental class of $M$.  It follows from the above discussion that for non-degenerate $H$, it is defined via the following expression:
\begin{align}\label{eq:definition-cplus}
c_+(H)= \inf \{ a \in \bb R : \mathrm{PSS}([M]) \in \mathrm{im}(HF^a(M;H) \stackrel{i^a_*}{\longrightarrow} HF(M;H)) \} \,.
\end{align}

\subsubsection{The spectral capacity $c$}
\label{sec:spectral-capacity-c}

Following Viterbo \cite{viterbo1}, we extract from $c_+$ the spectral capacity $c$ mentioned in the introduction. Namely, for any open set $U$ in $M$, we define
\begin{align*}
  c(U)=\sup  \{ c_+(H) : H \in C^0([0,1] \times M), \; \mathrm{supp}(H_t) \subset U \forall t \in [0,1]\} \,.
\end{align*}
This quantity satisfies the properties defining a capacity, see \cite{viterbo1}.

\subsection{Comparison between Lagrangian and Hamiltonian spectral invariants}
\label{sec:comp-lagr-ham-si}

There is an action-preserving isomorphism between the Hamiltonian Floer complex of $(M,\omega)$ associated to a regular pair $(H,J)$ and the Lagrangian Floer complex of the diagonal $\Delta \simeq M$ seen as a Lagrangian in $(M\times M,(-\omega) \oplus \omega)$ and associated to an appropriate regular pair $(\hat H,\hat J)$. The goal of this section is to prove that the respective spectral invariants coincide (see also \cite[Section 3.4]{Leclercq08}). Given Hamiltonians $H$ and $G$ on $M$, we will denote by $H \oplus G$ the Hamiltonian given for every $(x,y)\in M\times M$ by $H \oplus G(x,y)=H(x)+G(y)$.
\begin{prop}
\label{prop:comp-lagr-ham-si}
  Let $(M, \omega)$ be a symplectically aspherical manifold. Let $\alpha \neq 0$ in $HM(M)$ and denote $\hat \alpha$ the corresponding class in $HM(\Delta)$. For any continuous time-dependent Hamiltonian $H$ on $M$,  $c(\alpha;H)=\ell(\hat \alpha;\Delta,\Delta;0 \oplus H)$. In particular, $c_+(H)=\ell([\Delta];\Delta,\Delta; 0 \oplus H)$.
\end{prop}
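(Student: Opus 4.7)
The plan is to establish an action-preserving chain-level isomorphism between the Hamiltonian Floer complex of $(M,\omega)$ for $(H,J)$ and the Lagrangian Floer complex of the diagonal $(\Delta,\Delta)\subset(M\times M,-\omega\oplus\omega)$ for the pair $(\hat H,\hat J)$ with $\hat H=0\oplus H$, check that this isomorphism intertwines the two PSS morphisms, deduce equality of the spectral invariants for smooth non-degenerate $H$, and extend to continuous $H$ by Lipschitz continuity.

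At the level of generators, the Hamiltonian vector field of $\hat H$ on $(M\times M,-\omega\oplus\omega)$ equals $(0,X_H)$, so Hamiltonian chords of $\hat H$ from $\Delta$ to $\Delta$ are precisely the paths $t\mapsto(x_0,\phi_H^t(x_0))$ with $x_0\in\mathrm{Fix}(\phi_H^1)$, in bijection with contractible $1$-periodic orbits $\gamma$ of $H$ via $\gamma\mapsto\tilde\gamma(t)=(\gamma(0),\gamma(t))$; this correspondence preserves non-degeneracy. Fixing a base point $p\in\Delta$, a capping $\bar\gamma$ of $\gamma$ in $M$ gives a capping $\bar{\tilde\gamma}$ of $\tilde\gamma$ obtained by first joining $p$ to $(\gamma(0),\gamma(0))$ by a strip lying in $\Delta$ (which contributes zero symplectic area since $\Delta$ is Lagrangian in $-\omega\oplus\omega$) and then extending by $(s,t)\mapsto(\gamma(0),\bar\gamma(s,t))$. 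Since the first factor is constant and $\hat H$ vanishes on it, a direct computation gives $\m A_H(\gamma)=\m A_{\hat H}^{\Delta,\Delta}(\tilde\gamma)$, and in particular $\Spec(H)=\Spec(\hat H)$. For the differential I would choose the split almost complex structure $\hat J_t=(-J_t)\oplus J_t$, which is $(-\omega\oplus\omega)$-compatible, and use the standard folding correspondence: a Floer strip $(u_1,u_2)\co\bb R\times[0,1]\to M\times M$ with boundary on $\Delta$ glues along the seams $u_1(s,i)=u_2(s,i)$, $i=0,1$, into a Floer cylinder $\bar u\co\bb R\times S^1\to M$ solving the standard Floer equation for $(H,J)$, and conversely. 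This yields a chain isomorphism $\Psi\co CF(M;H)\xrightarrow{\sim}CF(\Delta,\Delta;p;\hat H,\hat J)$ preserving the action filtration, hence isomorphisms $HF^a(M;H)\cong HF^a(\Delta,\Delta;p;\hat H,\hat J)$ for every $a\notin\Spec(H)$.

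The same folding argument, applied to the half-strip moduli spaces defining the PSS morphisms, shows that $\Psi$ intertwines $\Phi_{H,J}$ with $\Phi^\Delta_{\hat H,\hat J}$ under the tautological identification $HM(M)\cong HM(\Delta)$, $\alpha\leftrightarrow\hat\alpha$. Consequently $\Phi_{H,J}(\alpha)\in\mathrm{im}(i^a_*)$ if and only if $\Phi^\Delta_{\hat H,\hat J}(\hat\alpha)\in\mathrm{im}(i^a_*)$, which gives $c(\alpha;H)=\ell(\hat\alpha;\Delta,\Delta;0\oplus H)$ for every non-degenerate smooth $H$. Both sides are Lipschitz continuous in $H$ for the $C^0$ norm by \eqref{eq:unif_continuity}, so the identity extends to all continuous $H\in C^0([0,1]\times M)$, and specializing to $\alpha=[M]$ yields $c_+(H)=\ell([\Delta];\Delta,\Delta;0\oplus H)$. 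The main technical obstacle is the folding/unfolding step: one must verify that $\hat J$ can be chosen regular within the split class (possibly after a small perturbation preserving the splitting), that the gluing along the diagonal seam is smooth enough for the Fredholm and compactness theory to transfer, and that the resulting bijection of strips with cylinders is also compatible with the half-strip moduli spaces underlying the two PSS morphisms.
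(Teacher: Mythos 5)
Your proposal follows essentially the same route as the paper: fold/unfold between the Hamiltonian Floer complex of $(M,\omega)$ and the Lagrangian Floer complex of the diagonal in $(M\times M,(-\omega)\oplus\omega)$, check that the correspondence preserves actions and intertwines the PSS morphisms, and conclude by Lipschitz continuity. One caveat on the step you flag as the main technical obstacle: with $\hat H=0\oplus H$ and the seams placed at $t=0,1$, the unfolded map does \emph{not} solve the standard Floer equation for $(H,J)$ --- it solves the equation for the zero Hamiltonian on one half of the cylinder and for $H$ on the other, and the zeroth-order (Hamiltonian) term jumps at the seams unless $H_t$ vanishes near $t=0$ and $t=1$, so as stated the bijection with $\m M(\gamma_-,\gamma_+;H,J)$ fails. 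This is exactly what the time-reparametrization of Remark \ref{rem:time_rep} is for: the paper first replaces $H$ by $H^\sigma$ vanishing on $[0,\tfrac12]$ (which changes neither side of the identity) and then folds the loop at its midpoint, so that the glued object is a genuine smooth Floer cylinder for $(H^\sigma,J^\sigma)$ and the correspondence of moduli spaces (and of the PSS half-disks) is immediate.
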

Notice that we are in the case of a single Lagrangian submanifold $\Delta$, so that $\ell(\hat \alpha;\Delta,\Delta;0 \oplus H)$ refers to this particular setting, see Section \ref{sec:SI-in-case-single-lagr}.\\

At several points in this paper, and to begin with in the proof of the proposition above, we will need to work with Hamiltonians $H$ such that $H_t = 0$ for $t$ near $0$ and $1$.  This can always be achieved, without affecting the spectral invariants of $H$, by time reparametrization. This is the content of the following remark. 
\begin{remark} \label{rem:time_rep}
  Let $(H,J)$ be a regular pair. Pick a smooth increasing function $\sigma \co [0,1] \rightarrow \bb R$ so that $\sigma(t)=0$ for all $t \in [0,\varepsilon]$ and $\sigma(t)=1$ for all $t \in [1- \varepsilon' ,1]$ for some $\varepsilon$, $\varepsilon'$ so that $0< \varepsilon <1-\varepsilon'< 1$. Then define $H^\sigma_t(x)=\sigma'(t)H_{\sigma(t)}(x)$. There is an obvious bijection between the sets of orbits of $H$ and $H^\sigma$ which leads to a bijection on the Floer complexes as vector spaces:
\begin{align}\label{eq:isom-MVZ-reparam}
    CF(M;H) \rightarrow CF(M;H^\sigma) \,, \quad \gamma \mapsto [ \gamma^\sigma \co t \mapsto \gamma(\sigma(t)) ] 
\end{align}
which preserves the action, namely $\m A_{H^\sigma}(\gamma^\sigma)=\m A_H(\gamma)$ (since geometrically the orbits are the same, a capping of $\gamma$ also caps $\gamma^\sigma$). 

Then define $J^\sigma$ as $J^\sigma_t(x)=J_{\sigma(t)}(x)$. Notice that $(H^\sigma,J^\sigma)$ is regular, and that there is a bijection between the moduli spaces $\m M(\gamma_-,\gamma_+;H,J)$ and $\m M(\gamma^\sigma_-,\gamma^\sigma_+;H^\sigma,J^\sigma)$ so that \eqref{eq:isom-MVZ-reparam} induces an action-preser\-ving isomorphism of the differential complexes. Notice that geometrically the main objects (orbits and Floer's strips) remain the same. It is thus easy to see that geometrically the representatives of a given Floer homology class remain unchanged along the process so that, together with the fact that the action is preserved, the associated (Hamiltonian) spectral invariants coincide.

For the same reason, given a Lagrangian $L$, the Lagrangian spectral invariants associated to $H$ also remain unchanged along such reparametrization.
\end{remark}

We now prove Proposition \ref{prop:comp-lagr-ham-si}.
\begin{proof}
  First notice that if $(M,\omega)$ is symplectically aspherical, then the diagonal $\Delta$ is a weakly exact Lagrangian of $(M\times M,(-\omega)\oplus \omega)$.

We first prove the proposition for non-degenerate Hamiltonians. So we start with a regular pair $(H,J)$ and apply Remark \ref{rem:time_rep} with $\sigma \co [0,1] \rightarrow \bb R$ so that $\sigma(t)=0$ for all $t \in [0,1/2]$. Then $H^\sigma=0$ and $J^\sigma_t=J_0$ for all $t \in [0,1/2]$. 

Now we consider for $t \in [0,\tfrac{1}{2}]$ the Hamiltonian $\hat H_t = H^\sigma_{\frac{1}{2}-t} \oplus H^\sigma_{\frac{1}{2}+t}$ on $M\times M$. There is a bijection:
\begin{align}\label{eq:isom-lagr-ham}
  \begin{split}
    CF(M;H^\sigma) &\rightarrow CF(\Delta,\Delta; \hat H) \,,\\
[\gamma \co \bb S^1 \rightarrow M ] &\mapsto \left[ x \co \!\!\left[ 0,\tfrac{1}{2} \right] \rightarrow M\times M \right] \mbox{ with } x(t)=(\gamma(\tfrac12-t),\gamma(\tfrac{1}{2}+t))
  \end{split}
\end{align}
since $x$ is an orbit of $\hat H$ if and only if $\gamma$ is an orbit of $H^\sigma$. Notice that by definition of $\sigma$, $\hat H_t = 0 \oplus H^\sigma_{\frac{1}{2}+t}$ so that by Remark \ref{rem:time_rep} above 
\begin{align}
  \label{eq:time-rep-in-pdct}
  \ell([\Delta];\Delta,\Delta;\hat H)=\ell([\Delta];\Delta,\Delta;0\oplus H)
\end{align}
and $x(t)=(\gamma(0),\gamma(\tfrac{1}{2}+t))$.  

Again, we need an appropriate family of almost complex structures $\hat J$ on $M\times M$ which we obtained by putting $\hat J_t(x,y) = -J^\sigma_{\frac{1}{2}-t}(x) \times J^\sigma_{\frac{1}{2}+t}(y)$ for $t\in [0,\tfrac{1}{2}]$. It is easy to see that $(H^\sigma,J^\sigma)$ is regular and that the bijection \eqref{eq:isom-lagr-ham} above is compatible with the differentials of the complexes. Indeed, pick any two generators of $CF(M;H^\sigma)$, $\gamma_-$ and $\gamma_+$ and any cylinder $u \co \bb R \times \bb S^1$ which uniformly converges to $\gamma_\pm$ when $s$ converges to $\pm\infty$. Denote respectively by $x_\pm$ the generators of $CF(\Delta,\Delta;\hat H)$ given by \eqref{eq:isom-lagr-ham} from $\gamma_\pm$ and consider
\begin{align*}
  \hat u \co \bb R \times \left[ 0 , \tfrac{1}{2} \right] \rightarrow M \times M \,, \quad \hat u(s,t)=\left(u\left(s, \tfrac{1}{2} -t \right),u \left(s,\tfrac{1}{2}+t \right)\right) .
\end{align*}
When $s$ goes to $\pm\infty$, $\hat u$ uniformly converges to $(\gamma_{\pm}(\tfrac{1}{2} -t),\gamma_{\pm}(\tfrac{1}{2} +t))=x_{\pm}(t)$. The boundary conditions are: $\hat u(s,0)=(u(s,\tfrac{1}{2}),u(s,\tfrac{1}{2}))$ and $\hat u(s,\tfrac{1}{2})=(u(s,0),u(s,1))$ which both lie in $\Delta$ for any $s\in \bb R$. Finally, projecting Floer's equation 
\begin{align*}
\forall t \in [0,\tfrac{1}{2}],\;  \del_s \hat u + \hat J_t(\hat u) (\del_t \hat u - X_{\hat H}^t (\hat u)) = 0
\end{align*}
to both components of the product shows that it is satisfied if and only if
\begin{align*}
\forall t \in [0,1],\;  \del_s u +  J^\sigma_t(u) (\del_t u - X_{H^\sigma}^t (u)) = 0 \,.
\end{align*}
Thus $\hat u \in \m M^{\Delta,\Delta}(x_-,x_+;\hat H,\hat J)$ if and only if $u \in \m M(\gamma_-,\gamma_+;H^\sigma,J^\sigma)$ and \eqref{eq:isom-lagr-ham} induces an isomorphism of complexes.

Finally, remark that there is an obvious correspondence between the cappings of a 1--periodic orbit $\gamma$ and the half-cappings of its associated orbit $x$. In particular, a capping $\bar \gamma$ of $\gamma$ can be thought of as a half-capping for $x$, by putting $\bar x=(\bar x_1,\bar x_2) \co D^2 \rightarrow M\times M$, with $\bar x_1$ the constant half-capping mapping $D^2$ to $\gamma(0)$ and $x_2$ the half-capping mapping $\del D^+$ to the image of $\gamma$ and $\del D^-$ to $\gamma(0)$. By doing so, not only $\bar x$ maps $\del D^+$ to the image of $x$ in $M\times M$ and $\del D^-$ to $(\gamma(0),\gamma(0)) \in \Delta$, but the symplectic area of $\bar \gamma$ with respect to $\omega$ and the symplectic area of $\bar x$ with respect to $(-\omega)\oplus\omega$ coincide. It easily follows that the action is preserved along the above transformation, namely $\m A_{H^\sigma}(\gamma)=\m A^{\Delta,\Delta}_{\hat H}(x)$.\footnote{ To be perfectly precise, we should have used an additional time-reparametrization to define $\hat{H}$ on the whole interval $[0,1]$. Since such a reparametrization is harmless in terms of spectral invariants as explained in Remark \ref{rem:time_rep}, we omitted it.}

Now pick a Morse--Smale pair $(f,g)$ on $M$ and define $(\hat f,\hat g)$ on $\Delta$ by putting $\hat f(x,x)=f(x)$ and $\hat g_{(x,x)}((\xi,\xi),(\eta,\eta))=g_x(\xi,\eta)$ for all $x$ in $M$ and all $\xi$ and $\eta$ in $T_xM$. Then the pair $(\hat f,\hat g)$ is a Morse--Smale pair for $\Delta$ and it is easy to show that the moduli spaces involved in the definition of the Hamiltonian PSS morphism in $M$ correspond to those defining the Lagrangian PSS morphism in $M\times M$ with respect to $\Delta$ along the above process. Thus, for any non-zero homology class $\alpha \in HM(M)$, which we denote $\hat \alpha$ when seen as a homology class in $HM(\Delta)$, $c(\alpha;H^\sigma)=\ell(\hat \alpha;\Delta,\Delta;\hat H)$. In particular, when $\alpha=[M]$, $\hat\alpha = [\Delta]$ so that $c_+(H^\sigma)=\ell([\Delta];\Delta,\Delta;\hat H)$.

Combined with \eqref{eq:time-rep-in-pdct}, this concludes the proof for smooth non-degenerate Hamiltonians $H$. In view of the extension of both $c$ and $\ell$ to $C^0([0,1]\times M)$, Proposition \ref{prop:comp-lagr-ham-si} easily follows from the non-degenerate case.
\end{proof}

\subsection{Products in Lagrangian Floer theory and the triangle inequality}
\label{sec:lagr-pdct-triangle-ineq}

Let $L_0$, $L_1$, and $L_2$ denote three Lagrangian submanifolds of $(M,\omega)$.  We fix three intersection points $p_{01} \in L_0 \cap L_1$, $p_{12} \in L_1 \cap L_2$, $p_{02} \in L_0 \cap L_2$ and suppose that each pair $(L_i, L_j)$ is weakly exact, in the sense of Definition \ref{def:weak_exact}, with respect to the intersection point $p_{ij} \in L_i \cap L_j$.  In this section,  we describe the usual product structure on Lagrangian Floer homology.  

We will be closely following  the construction of this product as described in \cite[Section 3]{AS10}.  There exist several other ways of defining the same product; see for example \cite{Au13}.  Let $\Sigma$ denote the Riemann surface obtained by removing three points from the boundary of the closed unit disk in $\C$.  We view $\Sigma$ as a strip with a slit: 
$$\Sigma = (\R \times [-1, 0] \sqcup \R \times [0, 1] ) / \sim, $$ where $(s, 0^-) \sim (s, 0^+)$ for all $s \geq 0$.  This is indeed a Riemann surface whose interior is naturally identified with $\R \times (-1, 1)\, \setminus (-\infty, 0] \times \{0\}$ and whose boundary consists of the three components $ \R \times \{-1\}, \, \R \times \{1\},$ and $(-\infty, 0] \times \{0^-, 0^+\}.$  At any point, other than $(0,0)$, the inclusion of $\Sigma$ into $\C$ induces the standard complex structure $(s,t) \mapsto s + it.$  At the point $(0, 0)$ the complex structure is given by the map $\{z \in \C: \mathrm{Re}(z) \geq 0\} \to \Sigma, \, z \mapsto z^2$. 

\begin{figure}[h]
  \centering
  \includegraphics[width=6.92cm]{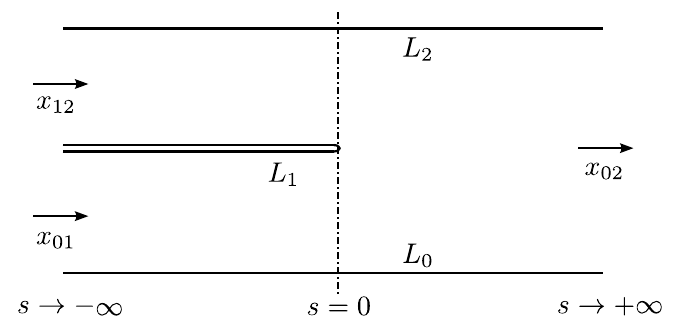}
  \caption{Abbondandolo--Schwarz's strip with a slit, $\Sigma$}
  \label{fig:split-strip}
\end{figure}

For $0\leq i < j \leq 2,$ denote by $(H_{ij},J_{ij})$ a regular pair (of a Hamiltonian and a compatible time-dependent almost complex structure) for the weakly exact pair of Lagrangians $(L_i, L_j)$. Without loss of generality, we may assume that $H_{ij}(t,x) = 0$ for $t$ near $0$ and $1$; see Remark \ref{rem:time_rep}.   To define the product structure, we need some auxiliary data:  For $s \in (-\infty, \infty)$ and $t \in [-1,1]$ let $J_{(s,t)}$ denote a family of almost complex structures on $M$ such that $$J_{(s,t)} = \left\{ \begin{array}{ll}  J^{t+1}_{01} & \mbox{ if } s\leq -1, \; t \in [-1,0], \\ J^t_{12} &  \mbox{ if } s \leq -1, \; t\in [0,1], \\ J_{02}^{\frac{t+1}{2}} &  \mbox{ if } s \geq 1, \; t\in [-1,1].  \end{array}\right.$$

Furthermore, Choose a function $K :  \R \times [-1, 1] \times M  \rightarrow \R$ such that 

$$K(s,t,x) = \left\{ \begin{array}{ll}  H_{01}(t+1,x)  &  \mbox{ if } s\leq -1, \; t \in [-1,0], \\ H_{12}(t,x) &  \mbox{ if } s \leq -1, \; t\in [0,1], \\ \frac{1}{2} H_{02}(\frac{t+1}{2},x) &  \mbox{ if } s \geq 1, \; t\in [-1,1].  \end{array}\right.$$

For any three Hamiltonian chords $x_{ij} \in CF(L_i,L_j;p_{ij};H_{ij})$, consider the moduli space $\m M(x_{01},x_{12}; x_{02})$ of maps  $u: \Sigma \to M$ solving the Floer-type equation $\del_s u + J_{(s,t)}(u)( \del_t u - X_{K}^{s,t}(u) )=0 $ and subject to the following asymptotic and boundary conditions
$$\begin{cases}  
\forall t\in[-1,0], \, u(-\infty,t)= x_{01}(t+1) \mbox{ and } \forall t\in[0,1], \, u(-\infty,t)= x_{12}(t) , \\   
 \forall t\in[-1,1], \, u(+\infty,t)= x_{02}\big(\frac{t+1}{2}\big), \\  
u(\R \times \{-1\}) \subset L_0, \, u(\R \times \{1\})\subset L_2, \, u((-\infty, 0] \times \{0^-, 0^+\}) \subset L_1. 
\end{cases}$$

For generic choices of $K$ and $J$, the moduli space  $\m M(x_{01},x_{12}; x_{02})$ is a smooth finite dimensional manifold.  Its  $0$--dimensional component, denoted by $\m M_{[0]}(x_{01},x_{12}; x_{02}),$ is compact and thus finite.  We denote by $\# \m M_{[0]}(x_{01},x_{12}; x_{02})$ its cardinality modulo $2$.   We can now define a bilinear map 
\begin{align*}
  CF(L_0,L_1;p_{01};H_{01}) \times &CF(L_1,L_2;p_{12};H_{12}) \rightarrow  CF(L_0,L_2;p_{02};H_{02}) \\
  (x_{01}, x_{12}) & \mapsto \sum_{x_{02}} {\# \m M_{[0]}(x_{01},x_{12}; x_{02})} \cdot x_{02} \,. 
\end{align*}
  This map depends on the auxiliary data $(K, J)$.  However, it can be shown that it induces a well-defined associative product at the level of homology:
  \begin{align*}
   HF(L_0,L_1;p_{01};H_{01}, J_{01}) \otimes HF(L_1,&L_2;p_{12};H_{12}, J_{12}) \\ & \longrightarrow  HF(L_0,L_2;p_{02};H_{02}, J_{02}) \,.      
  \end{align*}

We will refer to this product as the pair-of-pants product. Given Floer homology classes $\alpha, \beta$, we will denote their pair-of-pants product by $\alpha * \beta$.  

\subsubsection{Compatibility of the pair-of-pants product with continuation maps}\label{sec:compat_pop_cont}  
Denote by $H'_{ij}, \,  0\leq i < j \leq 2,$ three  additional Hamiltonians which are non-degenerate with respect to the pairs $(L_i, L_j)$ and pick three almost complex structures $J'_{ij}$ so that the pairs $(H'_{ij},J'_{ij})$ are regular. 
Let $\alpha \in HF(L_0,L_1;p_{01};H_{01},J_{01})$ and $\beta \in HF(L_1,L_2;p_{12};H_{12},J_{12}).$  The pair-of-pants product $*$ is compatible with continuation maps in the following sense:
\begin{equation}\label{eq:compat_cont_pop}
\Psi_{H_{01}, J_{01}}^{H'_{01},J'_{01}}(\alpha) * \Psi_{H_{12},J_{12}}^{H'_{12},J'_{12}}(\beta) = \Psi_{H_{02},J_{02}}^{H'_{02},J'_{02}}(\alpha * \beta) \,.
\end{equation}
One can prove this formula by considering 0-- and 1--dimensional components of suitable moduli spaces of objects combining continuation Floer strips and pair-of-pants strips with slits.

Note that this compatibility between pair-of-pants and continuation maps allows one to consider the pair-of-pants product as a product on Lagrangian Floer homology, independently of the auxiliary data:
  \begin{align*}
   HF(L_0,L_1;p_{01}) \otimes HF(L_1,L_2;p_{12}) \longrightarrow  HF(L_0,L_2;p_{02}) \,.      
  \end{align*}

\subsubsection{The pair-of-pants product when $L_0 =L_1$}\label{sec:pop_intersec_prdct}
As mentioned in Section \ref{sec:when-l_0=l_1}, in the case of a single Lagrangian $L$, the PSS isomorphism intertwines the Morse and Floer theoretic versions of the intersection product. Namely,
\begin{align} \label{eq:pop_intersec_prdct}
\Phi^{L}_{H,J}(a \cdot b)  =  \Phi^{L}_{H,J}(a) * \Phi^{L}_{H,J}(b) 
\end{align}
for any regular pair $(H,J)$ and any two classes $a$ and $b$ in $HM(L)$. So in the case of a single Lagrangian, the pair-of-pants product turns $HF(L,L;p)$ into a ring with unit $\Phi^{L}_{H,J}([L])$, where $[L]$ is the fundamental class of $L$.

\subsubsection{The triangle inequality}\label{subsubsec:triangle_ineq} 
 We continue to work with the Lagrangians $L_0, L_1, L_2$ from the previous sections.  We call a triple $(L_0, L_1, L_2)$ of Lagrangians weakly exact if any disk with boundary on $L_0 \cup L_1 \cup L_2$ and ``corners'' at $p_{01}, p_{12}, p_{02}$ has vanishing symplectic area.  More precisely, denote by $D$ the closed unit disk in $\C$, and fix the three points $z_0 = 1, z_1 = e^{\frac{2\pi}{3}i}, z_2 = e^{-\frac{2\pi}{3}i}$ on the boundary of $D$.  Let $\gamma_0$ denote the segment on the boundary of $D$ between $z_0$ and $z_1$, and similarly define $\gamma_1, \gamma_2$.

\begin{definition}\label{def:weak_exact_triple} 
The triple  $(L_0, L_1, L_2)$ is weakly exact with respect to the intersection points $(p_{01}, p_{12}, p_{02})$ if  $\int_{D}  v^*\omega = 0$ for every disk $$v : (D,\gamma_0, \gamma_1, \gamma_2, z_0, z_1, z_2)  \rightarrow (M, L_0, L_1, L_2, p_{01}, p_{12}, p_{02}).$$
\end{definition}

Our main motivation for introducing the above definition is to establish sharp estimates needed to prove the triangle inequality satisfied by spectral invariants.

\begin{example}\label{exple:exact-triple} 
Weakly exact pairs of Lagrangians in the sense of Definition \ref{def:weak_exact} provide examples of weakly exact triples. Namely, if $(L_0,L_1)$ is weakly exact with respect to $p\in L_0 \cap L_1$, then $(L_0,L_0,L_1)$ and $(L_0,L_1,L_1)$ are weakly exact with respect to $(p,p,p)$ since a disk as in the definition above is a particular case of pinched disks as in Definition \ref{def:weak_exact}. This, combined with Example \ref{ex:weakly_exact}, shows that the triples in which we will be interested in the course of the proof of Theorem \ref{theo:main-theo} (more precisely, in Theorem \ref{theo:triangle_ineq_main} below) are weakly exact with respect to $(p,p,p)$ for any $p\in L_0 \cap L_1$.
\end{example}

Let $H_{01}, H_{12}$ denote any two time-dependent Hamiltonians on $M$.  Define 
\begin{align}\label{eq:defn_sharp}
H_{01} \# H_{12}(t,x) = \left\{ \begin{array}{ll} 
2H_{01}(2t,x) & \text{if} \; t \in [0, \frac{1}{2}] \\ 
 2H_{12}(2t-1,x) & \text{if} \; t \in [\frac{1}{2}, 1]. \end{array} \right.
\end{align}

 Once again, without loss of generality we may assume that both $H_{01}$ and $H_{12}$ vanish for $t$ near $0$ and $1$, see Remark \ref{rem:time_rep}.  Hence, $H_{01} \# H_{12}$ is a smooth Hamiltonian. Observe that $\phi^1_{H_{01} \# H_{12}} = \phi^1_{H_{12}} \circ \phi^1_{H_{01}}.$  
The main goal of this section is to prove the following triangle inequality:  
\begin{theo}\label{theo:triangle_inequality_general}
Let $(L_0,L_1,L_2)$ be a triple of Lagrangians which is weakly exact with respect to $(p_{01}, p_{12}, p_{02}),$ where $p_{ij} \in L_i \cap L_j$.  Denote by $\alpha, \beta$  homology classes in the reference Floer homology groups $HF_{\mathrm{ref}}(L_0,L_1;p_{01})$ and $HF_{\mathrm{ref}}(L_1, L_2;p_{12})$, respectively.  The following inequality holds:
$$\ell(\alpha * \beta ;L_0,L_2;p_{02}; H_{01} \# H_{12}) \leq
  \ell(\alpha;L_0,L_1;p_{01}; H_{01} ) + \ell(\beta;L_1,L_2;p_{12}; H_{12}).$$
\end{theo}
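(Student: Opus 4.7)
The plan is to obtain the triangle inequality from a clean energy--action estimate for the pair-of-pants moduli spaces, in the same spirit as the classical triangle inequality in Hamiltonian Floer theory, but adapted to three Lagrangians and using the weakly-exact-triple assumption to kill the capping ambiguity. First, by the Lipschitz estimate \eqref{eq:unif_continuity} and density of non-degenerate Hamiltonians, I may assume that $H_{01}$, $H_{12}$, and $H_{01}\#H_{12}$ are all non-degenerate with respect to the appropriate pairs of Lagrangians; by Remark \ref{rem:time_rep}, I may furthermore assume that $H_{01}$ and $H_{12}$ vanish for $t$ near $0$ and $1$, so that $H_{01}\#H_{12}$ is smooth. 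The key observation is that the very definition of the concatenation is tailored to the asymptotics of the pair-of-pants auxiliary Hamiltonian: a direct computation shows that $\tfrac{1}{2}(H_{01}\#H_{12})(\tfrac{t+1}{2},x)$ equals $H_{01}(t+1,x)$ on $t\in[-1,0]$ and $H_{12}(t,x)$ on $t\in[0,1]$, so the prescriptions for $K$ at $s=-\infty$ and $s=+\infty$ in Section \ref{sec:lagr-pdct-triangle-ineq} already agree on the slit strip $\Sigma$. I would therefore take $K$ to be $s$-independent, equal to $H_{01}(t+1,\cdot)$ on the bottom half-strip $\R\times[-1,0]$ and to $H_{12}(t,\cdot)$ on the top half-strip $\R\times[0,1]$, together with an analogous $s$-independent choice of almost complex structure.

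The main computation is then to show that every $u\in\m M(x_{01},x_{12};x_{02})$ satisfies
\begin{align*}
E(u) \;=\; \m A^{L_0,L_1}_{H_{01}}(x_{01}) + \m A^{L_1,L_2}_{H_{12}}(x_{12}) - \m A^{L_0,L_2}_{H_{01}\#H_{12}}(x_{02}).
\end{align*}
Using the Floer equation $\del_s u=-J(\del_t u-X_K)$ together with $\del_s K\equiv 0$, the energy rewrites as $\int_\Sigma u^*\omega-\int_\Sigma \del_s(K\circ u)\,ds\,dt$. Integrating the second term in $s$ on each of the two half-strips and using the asymptotic conditions yields, after the obvious change of variables, the combination $\int_0^1 H_{01}(t,x_{01})\,dt+\int_0^1 H_{12}(t,x_{12})\,dt-\int_0^1 (H_{01}\#H_{12})(t,x_{02})\,dt$. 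For the $\int_\Sigma u^*\omega$ term I would attach cappings $\bar x_{01},\bar x_{12}$ to the inputs and (with reverse orientation) a capping $\bar x_{02}$ to the output; the result is a disk in $M$ with boundary on $L_0\cup L_1\cup L_2$ and three corners at $p_{01},p_{12},p_{02}$, so Definition \ref{def:weak_exact_triple} forces its symplectic area to vanish, pinning down $\int_\Sigma u^*\omega$ in terms of the three capping areas. Inserting this into the previous expression and recognizing the definition \eqref{eq:action-funct} of the action functional produces the identity. Non-negativity of $E(u)$ then shows that at the chain level the pair-of-pants product sends $CF^{<a}(L_0,L_1;p_{01};H_{01})\otimes CF^{<b}(L_1,L_2;p_{12};H_{12})$ into $CF^{<a+b}(L_0,L_2;p_{02};H_{01}\#H_{12})$ for every $a,b\in\R$.

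With this chain-level estimate in hand, the triangle inequality follows by the standard filtered-homology argument: fixing regular values $a>\ell(\alpha;L_0,L_1;p_{01};H_{01})$ and $b>\ell(\beta;L_1,L_2;p_{12};H_{12})$, the continuation-transported classes $\Psi^{H_{01},J_{01}}_{\Href,\Jref}(\alpha)$ and $\Psi^{H_{12},J_{12}}_{\Href,\Jref}(\beta)$ admit representatives in the respective sub-level complexes by Definition \ref{def:lag_spec_inv}. Their pair-of-pants product, which by the compatibility formula \eqref{eq:compat_cont_pop} represents $\Psi^{H_{01}\#H_{12},J_{02}}_{\Href,\Jref}(\alpha*\beta)$, then lies in $CF^{<a+b}$, so $\ell(\alpha*\beta;L_0,L_2;p_{02};H_{01}\#H_{12})\leq a+b$, and taking infima over admissible $a,b$ yields the theorem. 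The principal technical obstacle is the bookkeeping in the energy--action computation: one must match the asymptotic behavior of $u$ on $\Sigma$ with the time reparametrizations hidden inside $H_{01}\#H_{12}$, handle the two half-strips of $\Sigma$ separately, and carefully verify that the capped pair-of-pants strip has precisely the combinatorial shape covered by Definition \ref{def:weak_exact_triple}, with the correct orientations and the three corners based at $p_{01},p_{12},p_{02}$.
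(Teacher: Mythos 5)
Your proof is correct and follows essentially the same route as the paper: the same energy--action identity on the strip with a slit, the same use of the weakly-exact-triple condition to fix $\int_\Sigma u^*\omega$ via cappings, and the same filtered chain-level conclusion. The only (harmless) difference is that you take $K$ exactly $s$-independent to get an exact identity, whereas the paper perturbs $K$ by an arbitrarily small $s$-dependent amount on $s\in[-1,1]$ to ensure regularity of the moduli spaces, obtaining the inequality up to $\epsilon$ and letting $\epsilon\to 0$; if you insist on an unperturbed $K$ you should note that transversality must then be achieved through the almost complex structure alone.
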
  

Note that, the compatibility of the pair-of-pants product with continuation maps, as described in Section \ref{sec:compat_pop_cont}, allows us to view $\alpha * \beta$ in the reference Floer homology group $HF_{\mathrm{ref}}(L_0, L_2;p_{02})$. We will now prove the triangle inequality.

\begin{proof}
  Recall that, by Inequality \eqref{eq:unif_continuity}, the spectral invariant $\ell(\cdot\, ;L_i,L_j;p_{ij}; H)$ depends continuously on $H$. Hence by replacing $H_{01}$, and $H_{12}$ with nearby non-degenerate Hamiltonians if needed, we may assume that $H_{01}, H_{12}$ and $H_{01} \# H_{12}$ are all regular.

 Write $H_{02} = H_{01} \# H_{12}$. As in the previous section, take Hamiltonian chords $x_{ij} \in CF(L_i,L_j;p_{ij};H_{ij})$ and consider the moduli space appearing in the definition of the pair-of-pants product, $\m M(x_{01},x_{12}; x_{02})$.  For any $\epsilon >0$, it is possible to pick the function $K: \R \times [-1,1] \times M \rightarrow \R$ in the auxiliary data $(K, J)$ such that
  \begin{align}\label{eq:s_indep}
  \left| \frac{\partial K_{s,t}} {\partial s} \right| \leq \frac{\epsilon}{4} \;\; \text{if  } s \in [-1, 1], \; \text{and } \frac{\partial K_{s,t}} {\partial s} = 0 \mbox{ otherwise.}
  \end{align} 
  Indeed, this can be achieved by making a small perturbation of the following function
 $$ K'(s,t,x) = \left\{ \begin{array}{ll}
 H_{01}(t+1,x)  & \mbox{ for } t\in [-1, 0 ], \\ 
 H_{12}(t,x) & \mbox{ for  } t \in [0,1]. \end{array}\right. $$
                                                                                                                                                             
We leave it to the reader to verify that proving the triangle inequality amounts to showing that $\m A_{H_{02}}^{L_0,L_2}(x_{02}) \leq \m A_{H_{01}}^{L_0,L_1}(x_{01}) + \m A_{H_{12}}^{L_1,L_2}(x_{12}).$  We will now prove this last inequality.  For any $u \in \m M(x_{01},x_{12}; x_{02}),$ the following holds:	
\begin{align*}
  0 \leq \int_{\Sigma} &\|\partial_su(s,t)\|^2 dsdt = \int_{\Sigma} \omega(\partial_s u, J_{(s,t)}(u) \partial_s u) dsdt \\
 &= \int_{\Sigma} \omega(\partial_s u, \partial_t u - X_{K}^{s,t}(u)) dsdt = \int_{\Sigma} u^* \omega - \int_{\Sigma} dK_{s,t}(\partial_su) dsdt \,.
\end{align*}

Now, let  $\bar{x}_{ij}$ denote  homotopies  from the chords $x_{ij}$ to the constant paths $p_{ij}$, i.e. cappings for $x_{ij}$.  Since, the triple $(L_0, L_1, L_2)$ is weakly exact with respect to $(p_{01}, p_{12}, p_{02})$, the disk $ \bar{x}_{01} \# \bar{x}_{12} \# u \#  (- \bar{x}_{02})$  has symplectic area zero.  Hence, we see that 
$$\int_{\Sigma} u^* \omega =  - \int_{D}  (\bar{x}_{01})^* \omega - \int_{D}  (\bar{x}_{12})^* \omega  + \int_{D}  (\bar{x}_{02})^* \omega \,.$$
On the other hand,  Equation \eqref{eq:s_indep} implies that $\int_{\Sigma} {\partial_s K_{s,t}}(u) \,dsdt \leq \epsilon$ and hence we obtain the following:
\begin{align*}
  - \int_{\Sigma}& dK_{s,t}(\partial_su) dsdt = -  \int_{\Sigma} \partial_s (K_{s,t}\circ u) \,dsdt + \int_{\Sigma} \partial_s K_{s,t}(u) \,dsdt \\
 &\leq  \int_0^1 H_{01}(t, x_{01}(t))dt  + \int_0^1 H_{12}(t, x_{12}(t))dt  - \int_{-1}^1 H_{02}(t, x_{02}(t))dt +  \epsilon \,. 
\end{align*}
We conclude from the above that 
$$0 \leq \int_{\Sigma} \|\partial_su(s,t)\|^2 dsdt \leq  \m A_{H_{01}}^{L_0,L_1}(x_{01}) + \m A_{H_{12}}^{L_1,L_2}(x_{12}) - \m A_{H_{02}}^{L_0,L_2}(x_{02})+ \epsilon$$
which finishes the proof of the triangle inequality.  
\end{proof}

\subsection{A K\"unneth formula for Lagrangian Floer homology and a splitting formula for spectral invariants}
\label{sec:kunn-form-prod}

 Let  $(M', \omega')$ and $(M'', \omega'')$ denote two closed symplectically aspherical symplectic manifolds.  Let $(L_0', L_1')$ denote a pair of Lagrangians in $M'$ which is weakly exact with respect to a fixed intersection point $p' \in L_0' \cap L_1'$.  Take $(H', J')$ to be a regular pair (of a Hamiltonian and an almost complex structure), as defined in Section \ref{sec:Lagr-Floer-theory}, for the weakly exact pair of Lagrangians $(L_0, L_1)$.  Similarly, we define $(L_0'', L_1''),\; p'' \in L_0'' \cap L_1''$, and $(H'', J'')$ in $M''$.


Consider the product Lagrangians $L_0 = L_0' \times L_0'', \; L_1 = L_1' \times L_1''$ in $(M' \times M'', \omega' \oplus \omega'')$, the Hamiltonian $H_t(x,y) = H' \oplus H''(t,(x,y)) := H'_t(x) + H''_t(y)$, and the almost complex structure $J= J' \oplus J''.$  Note that the pair $(L_0, L_1)$ is weakly exact with respect to the intersection point $(p', p'') \in L_0 \cap L_1.$  It is easy to see that the Hamiltonian $H$ is non-degenerate, and moreover, the Hamiltonian chords of $H$ are of the form $x = (x', x'')$ where $x', x''$ are Hamiltonian chords of $H'$ and $H''$. 

The pair $(H, J)$ is regular for $(L_0, L_1)$: This is because the linearization of the operator $u\mapsto \del_s u + J_t(u)(\del_t u - X_H^t(u))$ splits into a product of the corresponding linearizations for $(H', J')$ and $(H'', J'')$; see, for example, \cite{Lec09} for further details.  It follows that, for any two chords $x_- = (x_-', x_-'')$ and $x_+ = (x_+', x_+'')$ of $H$, the moduli space $\m{\widehat{M}}^{L_0,L_1}(x_-,x_+;H,J)$, used in the definition of the Floer boundary map, coincides with the product $$\m{\widehat{M}}^{L_0',L_1'}(x_-',x_+';H',J') \times \m {\widehat{M}}^{L_0'',L_1''}(x_-'',x_+'';H'',J'').$$

We leave it to the reader to conclude from the discussion in the preceding paragraph that 
 $$CF(L_0,L_1;p;H) = CF(L_0',L_1';p';H') \otimes CF(L_0'',L_1'';p'';H'') \,,$$
 where the boundary map $\partial$ is defined by $\partial x = \partial ' x' \otimes x'' + x' \otimes \partial '' x'',$ with $\partial '$ and $\partial ''$ denoting the boundary maps for the Floer complexes of $H'$ and $H''$, respectively. Recall that we are working over $\Z_2$ and thus applying the standard K\"unneth formula we obtain 
 \begin{equation}\label{eq:Kunneth}
 HF(L_0,L_1;p;H, J) = HF(L_0',L_1';p';H', J') \otimes HF(L_0'',L_1'';p'';H'', J'') \,. 
 \end{equation}

\subsubsection{A splitting formula for spectral invariants}\label{sec:prdct_form}
We present in this section a splitting formula\footnote{This is sometimes called the ``product formula'' in the literature.  We have chosen this alternative terminology in order to avoid any possible confusion with the triangle inequality coming from product of homology classes.} for spectral invariants in the situation described above. Consider a Floer homology class $ \alpha = \alpha' \otimes \alpha''\neq 0$ in $HF(L_0',L_1';p';H', J') \otimes HF(L_0'',L_1'';p'';H'', J'')$.  By the discussion above, $\alpha$ is a homology class in $HF(L_0,L_1;p;H, J)$.  The following splitting formula holds:
 \begin{equation}\label{eq:prdct_form}
 \ell(\alpha;L_0,L_1;p; H) = \ell(\alpha';L_0',L_1';p'; H') + \ell(\alpha'';L_0'',L_1'';p''; H'').
 \end{equation}
In \cite[Section 5]{EP09}, a more abstract and general version of the above formula is proven for spectral invariants of ``decorated $\Z_2$--graded complexes'', see \cite[Theorem 5.2]{EP09}.  Formula \eqref{eq:prdct_form} is an immediate corollary of this theorem.

\subsubsection{Compatibility of the K\"unneth Formula with the pair-of-pants product}\label{sec:compat_Kunneth_quant}
 
  In this section, we describe the compatibility of the K\"unneth formula \eqref{eq:Kunneth} with the pair-of-pants product as defined in Section \ref{sec:lagr-pdct-triangle-ineq}. 
  
   Let $L_0 = L_0' \times L_0'', \, L_1 = L_1' \times L_1'' \subset M' \times M''$  be as in the previous section, and consider additionally a third Lagrangian $L_2 = L_2' \times L_2''.$  For $0 \leq  i < j \leq 2,$ take three intersection points  $p_{ij} = (p_{ij}', p_{ij}'') \in L_i \cap L_j$ and suppose that $(L_i', L_j'), \, (L_i'', L_j'')$ are weakly exact with respect to the intersection points $p_{ij}', \, p_{ij}'',$ respectively.  Lastly, let $(H_{ij}', J_{ij}')$ and  $(H_{ij}'', J_{ij}'')$ denote regular pairs for $(L_i', L_j')$ and $(L_i'', L_j'')$, respectively.  
  
  As in the previous section, we consider the split Hamiltonians and almost complex structures $H_{ij} = H_{ij}' \oplus H_{ij}'', \, J_{ij} = J_{ij}' \oplus J_{ij}''.$  
 By the K\"unneth formula \eqref{eq:Kunneth}, $HF(L_i, L_j; p_{ij}; H_{ij}, J_{ij})$ is generated by elements of the form $\alpha' \otimes \alpha''$, where $\alpha' \in HF(L_i', L_j';p_{ij}'; H_{ij}', J_{ij}')$  and $\alpha'' \in HF(L_i'', L_j'';p_{ij}''; H_{ij}'', J_{ij}'').$ Therefore, describing the pair-of-pants product, in this setting, reduces to describing the product for such elements.
 
  Consider $ \alpha' \otimes \alpha'' \in HF(L_0, L_1;p_{01}; H_{01}, J_{01})$ and $\beta' \otimes \beta'' \in  HF(L_1, L_2;p_{12} \\ ; H_{12}, J_{12}).$ Then, the following equality holds  in $HF(L_0, L_2;p_{02}; H_{02}, J_{02}):$
   \begin{equation}\label{eq:pop_prdct_Kunn}
(\alpha' \otimes \alpha'') * (\beta' \otimes \beta'')= (\alpha' * \beta') \otimes (\alpha'' * \beta'').
 \end{equation}
 The reasoning as to why the above holds is very similar to the reasoning for the K\"unneth formula \eqref{eq:Kunneth}: Let $x_{ij} = (x_{ij}', x_{ij}'')$ denote Hamiltonian chords for $H_{ij} = H_{ij}' \oplus H_{ij}''$.  Recall from Section \ref{sec:lagr-pdct-triangle-ineq} the moduli space $\m M(x_{01},x_{12}; x_{02})$ which is used to define the pair-of-pants product.  Such moduli spaces  split into products: $$\m M(x_{01},x_{12}; x_{02}) = \m M(x_{01}',x_{12}'; x_{02}') \times \m M(x_{01}'',x_{12}''; x_{02}'').$$

 \subsubsection{Compatibility of the K\"unneth formula with the PSS isomorphism and the splitting formula.}\label{sec:compat_Kun_PSS}
Consider the case of a single Lagrangian $L = L' \times L''$. The PSS morphism as described in this particular case in Section \ref{sec:when-l_0=l_1} is compatible with the K\"unneth formula \eqref{eq:Kunneth}. This was the content of \cite[Claim 3.4]{Lec09} in the more general case of monotone manifolds. More precisely, the Morse theoretic version of K\"unneth's formula is satisfied, that is
\begin{align*}
  HM(L) = HM(L') \otimes HM(L'') \,.
\end{align*}
As in the Floer theoretic case this can be proven, even at the chain level, by choosing a Morse--Smale pair $(f,g)$ on $L$ which splits, that is $f=f'\oplus f''$ and $g=g'\oplus g''$ where $(f',g')$ and $(f'',g'')$ are Morse--Smale pairs for $L'$ and $L''$, respectively.

Now, for such split Morse and Floer data, respectively $(f,g)$ and $(H,J)$, one can easily prove that for all $a' \in HM(L';f',g')$ and $a'' \in HM(L'';f'',g'')$, 
 \begin{align}\label{eq:compat_Kun_PSS}
 \Phi^{L}_{H,J}(a' \otimes a'') = \Phi^{L'}_{H',J'}(a') \otimes  \Phi^{ L''}_{H'',J''}(a'')
 \end{align}
again, even at the chain level, since the moduli spaces involved in the construction of the PSS isomorphism themselves split along the product. (The fact that the PSS isomorphism is compatible with Morse and Floer continuation morphisms then allows one to consider, at the homological level, non necessarily split data.)

It follows that the splitting formula \eqref{eq:prdct_form} restricts to the following:
\begin{align} \label{eq:prdct_form_l0=l1}
 \ell(a \otimes b; L,L; H' \oplus H'') = \ell(a; L', L'; H') + \ell(b; L'', L''; H'')
\end{align}
for all non-zero Morse homology classes $a \in HM(L')$ and $b \in HM(L'')$. Notice that this corresponds to \cite[Theorem 2.14]{MVZ12} in the case of Hamiltonians with complete flows on cotangent bundles.

%
%

\section{Lagrangian Floer theory of tori}
\label{sec:lag_Floer_tori}
In this section, we specialize the theory developed in Section \ref{sec:preliminaries-Floer} to the settings introduced in Section \ref{sec:intro_tech}.  Recall that, $M = \bb T^{2k_1} \times \bb T^{2k_1} \times \bb T^{2k_2} \times \bb T^{2k_2}$ and that it is equipped with the standard symplectic form $\omega_{\mathrm{std}}.$
 Furthermore, recall that the Lagrangians we are interested in, $L_0$ and $L_1$, are defined as follows
\begin{align*}
  L_0 &= \bb T^{k_1} \times \{ 0 \} \times \bb T^{k_1} \times \{ 0 \} \times \bb T^{k_2} \times \{ 0 \} \times \bb T^{k_2} \times \{ 0 \} \,, \\
  L_1 &= \bb T^{k_1} \times \{ 0 \} \times \bb T^{k_1} \times \{ 0 \} \times \bb T^{k_2} \times \{ 0 \} \times \{ 0 \} \times \bb T^{k_2} \,.
\end{align*}
As noted in Section \ref{sec:intro_tech}, $L_i = L \times L_i'$, where 
\begin{align*}
  L = &\;\bb T^{k_1} \times \{ 0 \} \times \bb T^{k_1} \times \{ 0 \} \times \bb T^{k_2} \times \{ 0 \}  \;\subset\; \bb T^{2k_1} \times \bb T^{2k_1} \times \bb T^{2k_2} \,, \\
  &L_0' =  \bb T^{k_2} \times \{ 0 \} \;\subset\ \bb T^{2k_2},  \mbox{ and }\; L_1' = \{ 0 \} \times \bb T^{k_2} \;\subset\; \bb T^{2k_2} \,.
\end{align*}

The pairs of Lagrangians $(L,L)$, $(L_0', L_1')$, and $(L_0, L_1) = (L \times L_0', L \times L_1')$ are all weakly exact with respect to any point in their corresponding intersections; see Example \ref{ex:weakly_exact}.  We fix, for the rest of this article,  in the intersection of each of the above pairs the point all of whose coordinates are zero, and carry out the constructions of Floer homology and spectral invariants (as described in Section \ref{sec:preliminaries-Floer}) with respect to this intersection point. We will omit the intersection point from our notation.

\subsection{$HF(L_0, L_1)$ and the associated spectral invariants} \label{sec:spec-floer-theory}  
In this section, we construct an isomorphism between Morse homology $HM(L)$ and Floer homology $HF(L_0,L_1;H, J)$.  We will then use this isomorphism to associate spectral invariants to Morse homology classes.  

\begin{theo}\label{theo:Floer-hom-L0L1}
 There exists a PSS-type isomorphism 
$$\Phi^{L_0, L_1}_{H,J} : HM(L) \rightarrow HF(L_0, L_1; H, J),$$
associated to every regular pair $(H, J)$. 
Furthermore, the isomorphism $\Phi^{L_0, L_1}_{H,J}$ is compatible with continuation morphisms in the following sense:
\begin{align}\label{eq:PSS-comm-continuation2}
\Phi^{L_0, L_1}_{H,J} = \Psi_{H', J'}^{H, J} \circ \Phi^{L_0, L_1}_{H',J'},
\end{align}
where $(H', J')$ is any other regular pair.
\end{theo}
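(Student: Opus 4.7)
The plan is to bootstrap the desired isomorphism from the two PSS-type constructions already available in Sections \ref{sec:when-l_0=l_1} and \ref{sec:when-l_0cap-l_1}, using the K\"unneth formula \eqref{eq:Kunneth} to combine them, and then propagate the construction to arbitrary regular pairs via continuation morphisms. I will carry out the construction in three steps.

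First, I fix a \emph{split} reference pair $(\Href, \Jref) = (H_1 \oplus H_2,\, J_1 \oplus J_2)$ on $M$, where $(H_1, J_1)$ is a regular pair for the single Lagrangian $L$ inside $\bb T^{2k_1} \times \bb T^{2k_1} \times \bb T^{2k_2}$, and $(H_2, J_2)$ is a regular pair for the transverse pair $(L_0', L_1')$ inside $\bb T^{2k_2}$. (Such split pairs exist, as one can choose each factor generically.) The K\"unneth formula \eqref{eq:Kunneth} provides an isomorphism
\begin{align*}
  HF(L_0, L_1; \Href, \Jref) \;\cong\; HF(L, L; H_1, J_1) \otimes HF(L_0', L_1'; H_2, J_2).
\end{align*}
By Section \ref{sec:when-l_0=l_1} there is a PSS isomorphism $\Phi^L_{H_1,J_1} \co HM(L) \to HF(L,L;H_1,J_1)$, and by Section \ref{sec:when-l_0cap-l_1} there is a canonical isomorphism $\Phi^{L_0', L_1'}_{H_2, J_2} \co \bb Z_2 \to HF(L_0', L_1'; H_2, J_2)$. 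Tensoring the two and using the canonical identification $HM(L) \simeq HM(L) \otimes \bb Z_2$, I define $\Phi^{L_0,L_1}_{\Href, \Jref} \co HM(L) \to HF(L_0,L_1;\Href,\Jref)$ as the composition. Being the tensor product of two isomorphisms followed by the K\"unneth isomorphism, it is itself an isomorphism.

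Second, for an arbitrary regular pair $(H, J)$, I simply \emph{define}
\begin{align*}
  \Phi^{L_0, L_1}_{H, J} \;:=\; \Psi_{\Href, \Jref}^{H, J} \circ \Phi^{L_0, L_1}_{\Href, \Jref} \co HM(L) \longrightarrow HF(L_0, L_1; H, J).
\end{align*}
Since the continuation map $\Psi_{\Href,\Jref}^{H,J}$ is an isomorphism (Section \ref{sec:Lagr-Floer-theory}), $\Phi^{L_0,L_1}_{H,J}$ is an isomorphism.

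Third, compatibility with continuation morphisms, namely Equation \eqref{eq:PSS-comm-continuation2}, follows at once from the canonical property \eqref{eq:compos_cont_maps} of continuation maps: given any two regular pairs $(H,J)$ and $(H',J')$,
\begin{align*}
  \Psi_{H', J'}^{H, J} \circ \Phi^{L_0, L_1}_{H', J'}
  &= \Psi_{H', J'}^{H, J} \circ \Psi_{\Href, \Jref}^{H', J'} \circ \Phi^{L_0, L_1}_{\Href, \Jref} \\
  &= \Psi_{\Href, \Jref}^{H, J} \circ \Phi^{L_0, L_1}_{\Href, \Jref}
  \;=\; \Phi^{L_0, L_1}_{H, J}.
\end{align*}

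I do not expect any serious obstacle in this argument: once the K\"unneth formula \eqref{eq:Kunneth} and the two auxiliary PSS-type maps from Sections \ref{sec:when-l_0=l_1} and \ref{sec:when-l_0cap-l_1} are in hand, the construction is essentially formal. The only mild subtlety is that the K\"unneth formula is most naturally stated for split Floer data, which is why I construct $\Phi^{L_0,L_1}$ first at the split reference pair and then transport it to a general regular pair by continuation; the canonicity \eqref{eq:compos_cont_maps} of the continuation maps guarantees that the resulting collection of isomorphisms is consistent in the sense of \eqref{eq:PSS-comm-continuation2}, independently of the choice of reference.
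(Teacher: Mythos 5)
Your proposal is correct and follows essentially the same route as the paper: a split reference pair, the K\"unneth isomorphism combining the PSS map for $(L,L)$ with the canonical isomorphism for the transverse pair $(L_0',L_1')$, extension to arbitrary regular pairs by continuation, and compatibility via the canonicity \eqref{eq:compos_cont_maps} of continuation maps. No issues.
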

\begin{proof} 
Pick a Hamiltonian $F$ and an almost complex structure $j$, on $\bb T^{2k_1} \times \bb T^{2k_1} \times \bb T^{2k_2}$, such that $(F, j)$ is regular for the pair of Lagrangians $(L,L)$. Similarly, we pick $(F', j')$, on $\bb T^{2k_2}$, such that $(F', j')$ is regular for the pair $(L_0', L_1')$.  Define the Hamiltonian $F \oplus F'$ on $M$ by $F \oplus F' (z_1, z_2)= F(z_1) + F'(z_2),$ for $z_1 \in \bb T^{2k_1} \times \bb T^{2k_1} \times \bb T^{2k_2}$ and $z_2 \in  \bb T^{2k_2}.$

We know from Sections \ref{sec:when-l_0=l_1} and \ref{sec:when-l_0cap-l_1} that there exist a PSS isomorphism $\Phi^{L}_{F,j} : HM(L) \rightarrow HF(L,L; F, j)$ and a PSS-type isomorphism between $\Phi^{L_0',L_1'}_{F',j'} :\Z_2 \rightarrow HF(L_0', L_1'; F', j').$  

We define $$\Phi^{L_0, L_1}_{F\oplus F',j \oplus j'} : HM(L)\rightarrow HF(L,L; F, j) \otimes HF(L_0',L_1'; F', j')$$ to be the tensor product of these two isomorphisms, i.e. $\forall \, a \in HM(L)$ we have:
\begin{align}\label{eq:PSS_general}
\Phi^{L_0, L_1}_{F\oplus F', j \oplus j'} (a) = \Phi^{L}_{F,j}(a) \otimes [\mathrm{pt}],
\end{align} 
where $[\mathrm{pt}]$ denotes the non-trivial homology class in $HF(L_0', L_1'; F', j').$
On the other hand, the K\"unneth formula \eqref{eq:Kunneth} tells us that 
$$HF(L_0,L_1; F\oplus F', j \oplus j') =  HF(L,L; F, j) \otimes HF(L_0',L_1'; F', j').$$
Hence, $\Phi^{L_0, L_1}_{F\oplus F', j \oplus j'}$ gives an isomorphism between $HM(L)$ and $HF(L_0,L_1; F\oplus F', j \oplus j').$  For an arbitrary regular pair $(H, J)$ we define $\Phi^{L_0, L_1}_{H,J} : HM(L) \rightarrow \, HF(L_0, L_1; H, J)$ by the formula $$\Phi^{L_0, L_1}_{H,J} = \Psi_{F\oplus F', j \oplus j'}^{H,J}\circ \Phi^{L_0, L_1}_{F\oplus F', j \oplus j'}.$$

The proof of the second half of the theorem, concerning the compatibility of $\Phi^{L_0, L_1}_{H,J}$ with continuation morphisms, is an immediate consequence of 
the definition above and the fact that continuation morphisms are canonical in the sense of Equation \eqref{eq:compos_cont_maps}.
\end{proof}

As an immediate consequence of Theorem \ref{theo:Floer-hom-L0L1}, we can associate spectral invariants to elements of $HM(L)$. 

\begin{definition}\label{def:specialized_lag_spec_inv}  For every regular Hamiltonian  $H$, the spectral invariant associated to a non-zero element $a \in HM(L)$ is the number
$$\ell(a;L_0,L_1;H):=\ell(\Phi^{L_0, L_1}_{H,J}(a);L_0,L_1;H),$$
where the right-hand side  is defined via Equation \eqref{def:lag_spec_inv_1}.
\end{definition}

\begin{remark}\label{rem:definitions_are_compatible}
The above definition is compatible with Definition \ref{def:lag_spec_inv}.  In effect, what we have done in the above definition is to take the reference pair $(\Href, \Jref)$ of Definition \ref{def:lag_spec_inv} to be the pair $(F\oplus F', j \oplus j')$ from the proof of Theorem \ref{theo:Floer-hom-L0L1}.
\end{remark}

\medskip

It follows immediately from  Inequality \eqref{eq:unif_continuity} 
that for non-degenerate $H$, $H'$, we have: 
\begin{align}\label{eq:unif_continuity_split} \begin{split}
   \int_0^1 \min_M (H_t-H'_t) dt \leq |\ell(a;L_0,L_1;H)-&\ell(a;L_0,L_1;H')| \\
&\qquad \leq \int_0^1 \max_M (H_t-H'_t) dt \,. \end{split}
\end{align}
  We conclude that $\ell(a;L_0,L_1;H)$ can be defined by continuity for every continuous function $H:[0,1]\times M\to\R$.
  
\medskip 

We end this subsection by mentioning that, as in Section \ref{sec:spectrality}, one can easily verify the spectrality property, $\ell(a; L_0, L_1; H) \in  \mathrm{Spec}(H).$

\subsection{Product structure and the triangle inequality}\label{sec:prdct_triangle_l0l1}
Recall that the  Morse homology of any manifold $X$ carries a ring structure where the product of $a, b \in HM(X)$ is given by the intersection product $a \cdot b$.  

Consider the Lagrangian submanifolds $L_0 = L \times L_0', \; L_1 = L \times L_1'.$  As a consequence of the K\"unneth formula for Morse homology, the homology ring $HM(L_i)$ can be written as the tensor product of the rings $HM(L)$ and $HM(L_i')$, i.e. $$HM(L_i) = HM(L) \otimes HM(L_i').$$
For $1 \leq j \leq 3$,  let $(H_j, J_j)$ denote three regular pairs. Recall that in Section \ref{sec:lagr-pdct-triangle-ineq} we defined the pair-of-pants product.  Here, we will consider the following two instances of the pair-of-pants product 
\begin{align*}
*: HF(L_0,L_1;H_1, J_1) \otimes HF(L_1,L_1; H_2, J_2) \rightarrow  HF(L_0,L_1;H_3, J_3), \\
*: HF(L_0,L_0;H_1, J_1) \otimes HF(L_0,L_1; H_2, J_2) \rightarrow  HF(L_0,L_1;H_3, J_3).
\end{align*}  
The next theorem describes the relation between the above two pair-of-pants products and the intersection product on $HM(L)$.

\begin{theo}\label{theo: prdct_struct_L0L1}
 Denote by $[L_i'],$ for $i=0,1,$ the fundamental class in $HM(L_i')$ and by $a, b \in HM(L)$ any two Morse homology classes.  The intersection and pair-of-pants products satisfy the following relations:
\begin{enumerate}
\item $\Phi_{H_1, J_1}^{L_0, L_1} (a) * \Phi_{H_2, J_2}^{L_1}(b \otimes [L_1'])=  \Phi_{H_3, J_3}^{L_0, L_1} (a\cdot b),$
\item $\Phi_{H_1, J_1}^{L_0} (a \otimes [L_0']) * \Phi_{H_2, J_2}^{L_0, L_1}(b)=  \Phi_{H_3, J_3}^{L_0, L_1} (a\cdot b).$
\end{enumerate}
\end{theo}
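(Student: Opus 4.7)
The plan is to prove both identities by reducing to split Floer data, applying the K\"unneth splitting of the pair--of--pants product, and then handling each tensor factor separately. I will describe part (1) in detail; part (2) is completely symmetric.

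First I would exploit the continuation compatibility of both sides. By \eqref{eq:PSS-comm-continuation2} and \eqref{eq:compat_cont_pop}, the PSS-type isomorphisms and the pair--of--pants product both commute with continuation morphisms. Hence the equation in (1) holds for one triple of regular pairs $(H_j, J_j)$ if and only if it holds for any other triple. I may therefore choose the three regular pairs to be of split form
$$
(H_1, J_1) = (F_1 \oplus F_1', j_1 \oplus j_1'), \quad
(H_2, J_2) = (F_2 \oplus F_2', j_2 \oplus j_2'), \quad
(H_3, J_3) = (F_3 \oplus F_3', j_3 \oplus j_3'),
$$
where $(F_j, j_j)$ is regular for $(L, L)$, the pair $(F_1', j_1')$ is regular for $(L_0', L_1')$, $(F_2', j_2')$ for $(L_1', L_1')$, and $(F_3', j_3')$ for $(L_0', L_1')$.

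For such split data, the definition of $\Phi^{L_0, L_1}_{H,J}$ given in \eqref{eq:PSS_general} and the K\"unneth/PSS compatibility \eqref{eq:compat_Kun_PSS} yield
$$
\Phi^{L_0, L_1}_{H_1, J_1}(a) = \Phi^L_{F_1, j_1}(a) \otimes \Phi^{L_0', L_1'}_{F_1', j_1'}(1),
\qquad
\Phi^{L_1}_{H_2, J_2}(b \otimes [L_1']) = \Phi^L_{F_2, j_2}(b) \otimes \Phi^{L_1'}_{F_2', j_2'}([L_1']),
$$
where $1$ denotes the generator of $\Z_2$. Now the compatibility of the pair--of--pants product with the K\"unneth formula \eqref{eq:pop_prdct_Kunn} lets me split the product of these two classes as a tensor product of a pair--of--pants product on the $L$--factor and one on the $L_i'$--factor.

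On the $L$--factor I would invoke the standard identity \eqref{eq:pop_intersec_prdct} in the form
$\Phi^L_{F_1, j_1}(a) * \Phi^L_{F_2, j_2}(b) = \Phi^L_{F_3, j_3}(a \cdot b)$, which after applying the appropriate continuation map gives exactly what we need. On the $L_i'$--factor the claim reduces to
$$
\Phi^{L_0', L_1'}_{F_1', j_1'}(1) * \Phi^{L_1'}_{F_2', j_2'}([L_1']) = \Phi^{L_0', L_1'}_{F_3', j_3'}(1).
$$
Since $HF(L_0', L_1') = \Z_2$, this equality amounts to showing that the left--hand side is nonzero. This is where I expect the main technical point to lie: one must use that $\Phi^{L_1'}([L_1'])$ is the unit of the pair--of--pants ring structure on $HF(L_1', L_1')$ and that the pair--of--pants product makes $HF(L_0', L_1')$ a module over $HF(L_1', L_1')$, on which this unit acts as the identity. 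Combining the two factors and using \eqref{eq:PSS_general} for $(H_3, J_3)$ to reassemble the tensor product into $\Phi^{L_0, L_1}_{H_3, J_3}(a \cdot b)$ completes (1). Part (2) is proved by the mirror argument, now using that $\Phi^{L_0'}([L_0'])$ is the unit of $HF(L_0', L_0')$ and that $HF(L_0', L_1')$ is a left module over this ring via the second pair--of--pants product.
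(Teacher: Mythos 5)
Your reduction---passing to split Floer data via continuation compatibility, applying the K\"unneth compatibility \eqref{eq:pop_prdct_Kunn} of the pair-of-pants product, and invoking \eqref{eq:pop_intersec_prdct} on the $L$--factor---is exactly the paper's strategy, and that part is fine. The gap is in the last step, on the $L_i'$--factor. You correctly isolate the identity $[\mathrm{pt}] * \Phi^{L_1'}_{F_2',j_2'}([L_1']) = [\mathrm{pt}]$ as the crux (and correctly observe that, since $HF(L_0',L_1')\simeq\Z_2$, it suffices to show the left-hand side is nonzero), but you then dispose of it by asserting that $\Phi^{L_1'}([L_1'])$ ``acts as the identity'' on the module $HF(L_0',L_1')$. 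Nothing in the paper's framework establishes this: Equation \eqref{eq:pop_intersec_prdct} only gives unitality of the ring $HF(L_1',L_1')$, i.e.\ of the product $HF(L_1',L_1')\otimes HF(L_1',L_1')\to HF(L_1',L_1')$; it says nothing about how that unit acts on $HF(L_0',L_1')$ via the mixed product $HF(L_0',L_1')\otimes HF(L_1',L_1')\to HF(L_0',L_1')$. Unitality of this module structure is precisely the statement to be proven---it is the entire non-formal content of the theorem---so invoking it here is circular within the paper's setup.

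The paper closes this gap by a further K\"unneth reduction: it writes $L_0'$ and $L_1'$ as $k_2$--fold products of the circles $\Lambda_0=\bb T^1\times\{0\}$ and $\Lambda_1=\{0\}\times\bb T^1$ in $\bb T^2$, reducing the claim to the single identity $[\mathrm{p}]*[\mathrm{Q}]=[\mathrm{p'}]$, where $Q$ is the maximum of a $C^2$--small Morse function representing $[\Lambda_1]$ and $p'$ is the unique intersection point of $\Lambda_0$ with $\phi^1_f(\Lambda_1)$; this is then verified by exhibiting the unique holomorphic triangle in $\bb T^2$ with corners $p$, $Q$, $p'$. To repair your argument you would either need to carry out this (or an equivalent) moduli-space computation, or cite and verify the hypotheses of a general unitality theorem for the pair-of-pants module structure in the weakly exact setting. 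As written, your proof replaces the theorem's key computation with its own conclusion.
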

\begin{proof}
We will only prove the first of the above two identities.  The second is proven in a similar fashion.

Recall that $L \subset \bb T^{2k_1} \times \bb T^{2k_1} \times \bb T^{2k_2}$ and  $L_0', L_1' \subset \bb T^{2k_2}$.  Let $F$ and $F'$ denote two non-degenerate Hamiltonians on $\bb T^{2k_1} \times \bb T^{2k_1} \times \bb T^{2k_2}$ and $\bb T^{2k_2}$, respectively.   We claim that it is sufficient to prove the theorem in the special case where $H_1 = H_2 = H_3 = F \oplus F'.$ Indeed, this can be deduced using the following two ingredients: First, the compatibility of PSS  and PSS-type isomorphisms  with  continuation isomorphisms, as described by Diagram \eqref{eq:PSS-comm-continuation} and Equation \eqref{eq:PSS-comm-continuation2}.  Second, the compatibility of continuation isomorphisms with the pair-of-pants product as described by Equation \eqref{eq:compat_cont_pop}. We will prove the theorem in this special case and leave it to the reader to verify that this indeed does imply the general case.

  Pick almost complex structures $J, J'$ such that the pairs $(F,J)$ and $(F', J')$ are both regular.  Now, it follows from Equation \eqref{eq:PSS_general} that $$\Phi^{L_0, L_1}_{F \oplus F',J \oplus J'}(a) = \Phi^{L}_{F,J}(a) \otimes [\mathrm{pt}],$$
where $[\mathrm{pt}]$ denotes the non-trivial homology class in $HF(L_0', L_1'; F', j').$
We also know, from Equation \eqref{eq:compat_Kun_PSS}, that $$\Phi^{L_1}_{F\oplus F',J\oplus J'}(b \otimes [L_1']) = \Phi^{L}_{F,J}(b) \otimes  \Phi^{L_1'}_{F',J'}([L_1']).$$
We will be needing the following identity, whose proof we postpone for the time being:
\begin{equation}\label{eq:prdct}
[\mathrm{pt}] * \Phi^{L_1'}_{F',J'}([L_1']) =  [\mathrm{pt}].
\end{equation}

Using the above, we obtain the following:
\begin{align*}
 \Phi&^{L_0, L_1}_{F \oplus F',J \oplus J'}(a) * \Phi^{L_1}_{F\oplus F',J\oplus J'}(b \otimes [L_1']) & \\
&=  \left(\Phi^{L}_{F,J}(a) \otimes [\mathrm{pt}]\right) * \left(\Phi^{L}_{F,J}(b) \otimes  \Phi^{L_1'}_{F',J'}([L_1'])\right) & \\
&= \left(\Phi^{L}_{F,J}(a) * \Phi^{L}_{F,J}(b)\right) \otimes \left([\mathrm{pt}] * \Phi^{ L_1'}_{F',J'}([L_1'])\right) & \text{by Equation } \eqref{eq:pop_prdct_Kunn}\;\\
&= \Phi^{L}_{F,J}(a\cdot b) \otimes \left([\mathrm{pt}] * \Phi^{L_1'}_{F',J'}([L_1'])\right) & \text{by Equation } \eqref{eq:pop_intersec_prdct}\;\\
&= \Phi^{L}_{F,J}(a\cdot b) \otimes [\mathrm{pt}]  & \text{by Equation } \eqref{eq:prdct}\;\\
&= \Phi^{L_0, L_1}_{F \oplus F',J \oplus J'}(a\cdot b) & \text{by Equation } \eqref{eq:PSS_general}.
\end{align*}

\medskip
It remains to prove Equation \eqref{eq:prdct}. Recall that  $$
  L_0' =  \bb T^{k_2} \times \{ 0 \} \subset \bb T^{2k_2}, \, 
  L_1' = \{ 0 \} \times \bb T^{k_2} \subset \bb T^{2k_2} \,.
$$
 Let $$
\Lambda_0 = \bb T^{1} \times \{ 0 \} \subset \bb T^{2}, \, 
  \Lambda_1 = \{ 0 \} \times \bb T^{1} \subset \bb T^{2} \, ,$$
 and observe that (up to a symplectomorphism) $$L_0' = \underbrace{ \Lambda_0 \times \cdots \times  \Lambda_0}_{k_2 \text{ times}}, \,
 L_1' = \underbrace{ \Lambda_1 \times \cdots \times \Lambda_1}_{k_2 \text{ times}}.$$

 Equations \eqref{eq:PSS-comm-continuation} and \eqref{eq:compat_cont_pop} tell us, respectively, that continuation morphisms are compatible with the PSS isomorphism and the pair-of-pants product.  From this, one can conclude that it is sufficient to verify Equation \eqref{eq:prdct} for any specific choice of a regular pair $(F',J')$. Furthermore, the regular pair used for defining $HF(L_0', L_1')$ can indeed be different from the one used for defining $HF(L_1', L_1')$.  We will verify the formula for the choices described in the next two paragraphs.
  
   For $HF(L_1', L_1')$ we pick a regular pair of the form
 $$(\underbrace{ f\oplus \cdots \oplus f}_{k_2 \text{ times}}, \, \underbrace{j \oplus \cdots \oplus j)}_{k_2 \text{ times}},$$
 where $(f, j)$ denotes a regular pair on the $2$--torus $\bb T^{2},$ the almost complex structure $j \oplus \ldots \oplus j$ denotes the obvious split almost complex structure on $\bb T^{k_2}$, and  $f\oplus \ldots \oplus f(z_1, \ldots, z_{k_2}) = f(z_1)+ \ldots + f(z_{k_2}).$ 
 
    For $HF(L_0', L_1')$ we pick a regular pair of the form 
$$(0, j_0 \oplus \ldots \oplus j_0),$$ 
where $0$ denotes the zero Hamiltonian.  Recall that since $L_0'$ and $L_1'$ intersect transversely the zero Hamiltonian is non-degenerate for this pair.

Now, by the K\"unneth formula \eqref{eq:Kunneth} we have the following splittings 
\begin{align*}
  &HF(L_0', L_1'; 0, j_0 \oplus \cdots \oplus j_0) = HF(\Lambda_0, \Lambda_1; 0,j_0)^{\otimes k_2} , \mbox{ and }\\
  &HF(L_1', L_1'; f\oplus \cdots \oplus f, j \oplus \cdots \oplus j ) = HF(\Lambda_1, \Lambda_1; f,j) ^{\otimes k_2} .
\end{align*}
Furthermore, the above splittings are compatible with the pair-of-pants product as described by Equation \eqref{eq:pop_prdct_Kunn}.  This implies that Equation \eqref{eq:prdct} is an immediate consequence of the following claim:

 \begin{claim*}  Denote by $p$ the unique intersection point of $\Lambda_0$ and $\Lambda_1$ and by $[\mathrm{p}]$ the Floer homology class represented by this point.  Note that this is the unique non-zero class in $HF(\Lambda_0, \Lambda_1; 0,j_0)$. The pair-of-pants product $*: HF(\Lambda_0, \Lambda_1;0,j_0) \otimes HF(\Lambda_1, \Lambda_1; f,j) \rightarrow HF(\Lambda_0, \Lambda_1,;0, j_0)$ satisfies the following identity:
\begin{align}\label{eq:prdct_basecase}
[\mathrm{p}] * \Phi^{\Lambda_1}_{f,j}([\Lambda_1]) = [\mathrm{p}].
\end{align}
\end{claim*}
The proof of the above claim boils down to computing one of the simplest instances of the pair-of-pants product.  This is well-known to experts, and thus we only present a sketch of the proof while avoiding the technical details.

\begin{proof}[Proof of Claim] Once again, by Equations \eqref{eq:PSS-comm-continuation} and \eqref{eq:compat_cont_pop}, it is sufficient to verify the above claim for a given choice of a regular pair $(f, j)$.  We pick $f$ as follows:  begin with a $C^2$--small Morse function  on the circle $\Lambda_1$ and extend it trivially to the product $\bb T^2 = \Lambda_0 \times \Lambda_1$.  Furthermore, we pick $f$ such that $f|_{\Lambda_1}$ has  only two critical points.  Denote the maximum by $Q$ and the minimum by $q$. 

 Let $\Lambda_1' = \phi^1_f(\Lambda_1)$.  Since $f$ is $C^2$--small, $\Lambda_1'$ intersects $\Lambda_0$ transversely at a single point.  We denote this point by $p'$.  See Figure \ref{fig:pop-in-T2}.
 
\begin{figure}[h]
  \centering
  \includegraphics[width=7.0cm]{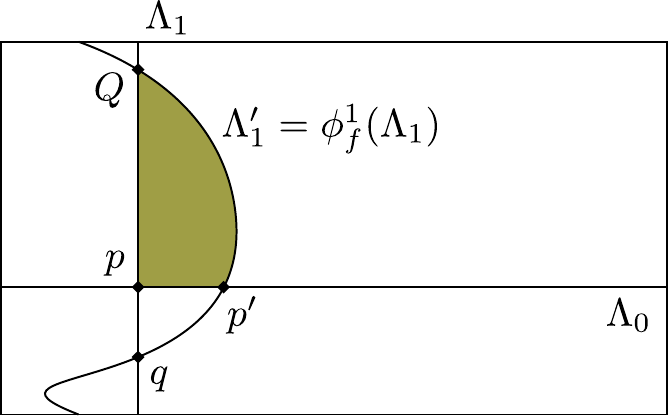}
  \caption{Pair-of-pants product in $\bb T^2$}
  \label{fig:pop-in-T2}
\end{figure}

 It is well-known that there exists a natural identification of $HF(\Lambda_1, \Lambda_1; f, j)$  with $HF(\Lambda_1, \Lambda_1'; 0, \tilde{j})$, where $\tilde{j}_t = (\phi^t_f)^{-1}_* j_t$; see for example \cite[Section 2.2.2]{Leclercq08} or \cite[Remark 1.10]{Au13}.
 The point $Q$ represents a homology class $[\mathrm{Q}] \in HF(\Lambda_1, \Lambda_1'; 0, \tilde{j})$, which corresponds to the fundamental class $[\Lambda_1] \in HM(\Lambda_1).$  Also, $HF(\Lambda_0, \Lambda_1';0, j_0)$ is generated by $[\mathrm{p'}]$, the homology class represented by the point $p'$.
 
 Again, using Diagram \eqref{eq:PSS-comm-continuation} and Equation \eqref{eq:compat_cont_pop}, we see that Equation \eqref{eq:prdct_basecase} is equivalent to 
 \begin{equation} \label{eq:prdct_basecase2}
 [\mathrm{p}] * [\mathrm{Q}] = [\mathrm{p'}]. 
 \end{equation}
  This last equality can be verified without much difficulty.  First, note that $[\mathrm{p}] \in HF(\Lambda_0, \Lambda_1; 0, j_0), \; [\mathrm{Q}] \in HF(\Lambda_1, \Lambda_1'; 0, \tilde{j})$, and $[\mathrm{p'}] \in HF(\Lambda_0, \Lambda_1; 0, j_0),$  and thus, the Hamiltonians in question are all zero.  Furthermore, we can take $j_0$ to be the standard complex structure on $\bb T^2$ and $j$ such that $\tilde{j} = j_0$.  Hence, to verify that $[\mathrm{p}] * [\mathrm{Q}] = [\mathrm{p'}],$  we must count the number of holomorphic disks on $\bb T^2$ with boundary on $\Lambda_0 \cup \Lambda_1 \cup \Lambda_0'$ and corners at the points $p$, $Q$, and $p'$.  We leave it to the reader to verify that there exists only one such disk: the one highlighted in Figure \ref{fig:pop-in-T2}.  This proves Equation \eqref{eq:prdct_basecase2}.
\end{proof}
This completes the proof of Theorem \ref{theo: prdct_struct_L0L1}.
\end{proof}

\begin{remark}\label{rem:full_desc_prdct}
For $i=0,1,$ denote by $H_k(L_i')$ the Morse homology group of degree $k$ of $L_i'$. Suppose that $x \in H_k(L_i')$ where $k\neq \dim(L_i')$.  Then, one can modify the proof of Theorem \ref{theo: prdct_struct_L0L1} to obtain the following additional identities:
\begin{enumerate}
\item $\Phi_{H_1, J_1}^{L_0, L_1} (a) * \Phi_{H_2, J_2}^{L_1}(b \otimes x)= 0,$
\item $\Phi_{H_1, J_1}^{L_0} (a \otimes x) * \Phi_{H_2, J_2}^{L_0, L_1}(b)= 0.$
\end{enumerate}
The above combined with Theorem \ref{theo: prdct_struct_L0L1}, give us a full description of the relation between the intersection product on $HM(L)$ and the pair-of-pants products
\begin{align*}
* \co HF(L_0, L_1) \otimes HF(L_1, L_1) \rightarrow  HF(L_0, L_1),\\
* \co HF(L_0, L_0) \otimes HF(L_0, L_1) \rightarrow  HF(L_0, L_1).  
\end{align*}
We will not prove these additional identities, as they are not needed for the proof of Theorem \ref{theo:main-theo} and their proofs are very similar to the proof of the previous theorem.  We mention here that, in the same way that the proof of Theorem \ref{theo: prdct_struct_L0L1} was reduced to establishing Equation \eqref{eq:prdct_basecase2}, proving these identities reduces  to showing the following:
 \begin{align*}
 [\mathrm{p}] * [\mathrm{q}] = 0,
\end{align*} 
where $p$ and $q$ are defined as in Figure \ref{fig:pop-in-T2}.
\end{remark}

\subsubsection{The triangle inequality} \label{sec:traingle_ineq_l0l1}
In this section, we use Theorem \ref{theo: prdct_struct_L0L1} to prove the two triangle inequalities mentioned in the introduction.  Recall that given two Hamiltonians $H, H'$, their concatenation $H \# H'$ is defined by 
\begin{align*}
H \# H' (t,x) = \left\{ \begin{array}{ll} 
2H(2t,x) & \text{if} \; t \in [0, \frac{1}{2}], \\ 
2H'(2t-1,x) & \text{if} \; t \in [\frac{1}{2}, 1]. \end{array} \right.
\end{align*}

\begin{theo}\label{theo:triangle_ineq_main}
Denote by $[L_i'],$ for $i=0,1,$  the fundamental class in $HM(L_i')$ and by $a, b \in HM(L)$ any two Morse homology classes such that $a\cdot b \neq 0$.  The following inequalities hold:
\begin{enumerate}
\item $\ell(a\cdot b; L_0, L_1; H \# H') \leq  \ell(a ; L_0, L_1; H)+ \ell(b \otimes [L_1']; L_1, L_1;  H'),$
\item $\ell(a\cdot b; L_0, L_1; H \# H') \leq  \ell(a \otimes [L_0']; L_0, L_0 ;H)  + \ell(b; L_0, L_1;  H').$
\end{enumerate}
\end{theo}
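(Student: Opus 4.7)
The plan is to deduce both inequalities directly from the general Floer-theoretic triangle inequality (Theorem \ref{theo:triangle_inequality_general}), combined with the description of the pair-of-pants product in terms of the intersection product on $HM(L)$ provided by Theorem \ref{theo: prdct_struct_L0L1}. Since spectral invariants are Lipschitz in the Hamiltonian by \eqref{eq:unif_continuity_split}, we may reduce, by approximation, to the case in which $H$, $H'$ and $H\#H'$ are all non-degenerate.

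For inequality (1), Example \ref{exple:exact-triple} ensures that the triple $(L_0, L_1, L_1)$ is weakly exact with respect to $(p,p,p)$ for any common intersection point $p$, so Theorem \ref{theo:triangle_inequality_general} applies with $H_{01}=H$ and $H_{12}=H'$. We apply it to the Floer classes $\alpha = \Phi^{L_0,L_1}_{H,J}(a)$ and $\beta = \Phi^{L_1}_{H',J'}(b\otimes [L_1'])$. By part (1) of Theorem \ref{theo: prdct_struct_L0L1}, the pair-of-pants product satisfies $\alpha * \beta = \Phi^{L_0,L_1}_{H\#H',\tilde J}(a\cdot b)$, which is non-zero since $a\cdot b\neq 0$ and $\Phi^{L_0,L_1}$ is an isomorphism. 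Translating the three spectral invariants appearing in the resulting Floer-theoretic triangle inequality back via Definition \ref{def:specialized_lag_spec_inv} yields exactly the three terms in the statement, proving (1).

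Inequality (2) is proven in exactly the same way, now working with the triple $(L_0, L_0, L_1)$ (weakly exact with respect to $(p,p,p)$ again by Example \ref{exple:exact-triple}), the classes $\alpha = \Phi^{L_0}_{H,J}(a\otimes [L_0'])$ and $\beta = \Phi^{L_0,L_1}_{H',J'}(b)$, and invoking part (2) of Theorem \ref{theo: prdct_struct_L0L1} to identify $\alpha * \beta$ with $\Phi^{L_0,L_1}(a\cdot b)$.

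The only delicate point is ensuring coherence of the identifications: the reference regular pairs implicit in Theorem \ref{theo: prdct_struct_L0L1} need not match those used to define the spectral invariants via Definition \ref{def:specialized_lag_spec_inv}. This is handled routinely by the canonicity of continuation morphisms \eqref{eq:compos_cont_maps}, their compatibility with the PSS-type morphism \eqref{eq:PSS-comm-continuation2}, and their compatibility with the pair-of-pants product \eqref{eq:compat_cont_pop}. I do not expect any real obstacle beyond this bookkeeping, since all the substantive Floer-theoretic work has already been carried out in Theorems \ref{theo:triangle_inequality_general} and \ref{theo: prdct_struct_L0L1}.
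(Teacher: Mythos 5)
Your proposal is correct and follows essentially the same route as the paper: reduce to the non-degenerate case by Lipschitz continuity, invoke Example \ref{exple:exact-triple} for weak exactness of the triples $(L_0,L_1,L_1)$ and $(L_0,L_0,L_1)$, apply Theorem \ref{theo:triangle_inequality_general} to the images of $a$ and $b\otimes[L_1']$ (resp.\ $a\otimes[L_0']$ and $b$) under the PSS-type maps, and identify the pair-of-pants product with $\Phi^{L_0,L_1}(a\cdot b)$ via Theorem \ref{theo: prdct_struct_L0L1}. The bookkeeping with continuation morphisms that you flag is exactly what the paper handles implicitly, so nothing further is needed.
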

\begin{proof}  We will only prove the first of the two inequalities, as the second one is proven in a very similar fashion.

By continuity of spectral invariants \eqref{eq:unif_continuity_split}, it is sufficient to prove the inequality in the special case where $H$, $H'$ and $H\#H'$  are all non-degenerate.  Pick an almost complex structure $J$ such that the pairs $(H, J), (H', J),$ and $(H\#H', J)$ are all regular.  Now, the triangle inequality becomes  a simple consequence of Theorem \ref{theo: prdct_struct_L0L1} and the triangle inequality of Theorem \ref{theo:triangle_inequality_general}.  Indeed,
\begin{align*}
\ell(a\cdot b; L_0, L_1; &H \# H') = \ell ( \Phi_{H\#H', J}^{L_0, L_1} (a\cdot b); L_0, L_1; H \# H')\\
& = \ell(\Phi_{H, J}^{L_0, L_1} (a) * \Phi_{H', J}^{L_1}(b \otimes [L_1']); L_0, L_1; H\#H')\\ 
& \leq \ell(\Phi_{H, J}^{L_0, L_1} (a); L_0, L_1; H) + \ell(\Phi_{H', J'}^{ L_1} (b \otimes [L_1']); L_1, L_1; H') \\
& = \ell(a; L_0, L_1; H) + \ell(b\otimes [L_1']; L_1, L_1; H').
\end{align*}
Note that the triangle inequality of Theorem \ref{theo:triangle_inequality_general} can be applied here because by Example \ref{exple:exact-triple} since $(L_0,L_1)$ is a weakly exact pair,  $(L_0, L_1, L_1)$ is a weakly exact triple.
\end{proof}
 
We now use the triangle inequality to prove Proposition \ref{cor:triangle-inequality-1}.

\begin{proof}[Proof of Proposition \ref{cor:triangle-inequality-1}]
We will only provide a proof in the case $i=1.$  The other case is proven in a similar fashion.

Applying Theorem \ref{theo:triangle_ineq_main} in the special case where $b= [L]$ and  $H = 0$ we obtain $$\ell(a; L_0, L_1; 0 \# H') \leq \ell(a; L_0, L_1; 0) + \ell([L_1]; L_1, L_1;  H').$$ Now, $\ell(a; L_0, L_1; 0)= 0$ because the spectrum of the zero Hamiltonian is the singleton $\{0\}$.  The  Hamiltonian $0 \# H'$ is a reparametrization of $H'$ and so, by Remark \ref{rem:time_rep}, $\ell(a; L_0, L_1; 0 \# H') = \ell(a; L_0, L_1;  H').$ This finishes the proof. 
\end{proof}

\subsection{A splitting formula} \label{sec:prdct_formula_l0l1}  In this subsection, we will use the splitting formula (\ref{eq:prdct_form}) to obtain a similar formula in our current setting.  This will be used in the proof of Theorem \ref{sec:proof}.

Recall that, $M = \bb T^{2k_1} \times \bb T^{2k_1} \times \bb T^{2k_2} \times \bb T^{2k_2}.$ 
Let $F$ and $F'$ denote two Hamiltonians on $\bb T^{2k_1} \times \bb T^{2k_1} \times \bb T^{2k_2}$ and $\bb T^{2k_2}$, respectively.  Define the Hamiltonian $F \oplus F'$ on $M$ by $F \oplus F' (z_1, z_2)= F(z_1) + F'(z_2),$ for $z_1 \in \bb T^{2k_1} \times \bb T^{2k_1} \times \bb T^{2k_2}$ and $z_2 \in  \bb T^{2k_2}.$  This Hamiltonian appeared in the proof of Theorem \ref{theo:Floer-hom-L0L1}. 

\begin{theo}\label{theo:prdct_form_l0l1}
Let $F\oplus F'$ denote any Hamiltonian of the form described in the previous paragraph.  Let $a \in HM(L)$ denote a non-zero class.  The following formula holds:  
$$\ell(a; L_0, L_1; F \oplus F') = \ell(a; L, L ; F) + \ell([\mathrm{pt}]; L_0', L_1'; F'),$$
where $[\mathrm{pt}]$ denotes the non-zero class in $HF(L_0', L_1')$.
\end{theo}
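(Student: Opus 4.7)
The plan is essentially to observe that the theorem is a direct consequence of combining the general splitting formula \eqref{eq:prdct_form} with the explicit form \eqref{eq:PSS_general} of the PSS--type isomorphism for the pair $(L_0, L_1) = (L \times L_0', L \times L_1')$. First I would reduce to the case where $F$ and $F'$ are non-degenerate: by the Lipschitz estimate \eqref{eq:unif_continuity_split} applied to each side (and the analogous estimates for $\ell(\cdot; L,L;F)$ and $\ell([\mathrm{pt}]; L_0', L_1'; F')$), both sides of the claimed identity depend continuously on $(F, F')$ in the $C^0$ topology, so establishing equality on a dense set suffices.

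Next, choose almost complex structures $J$ on $\bb T^{2k_1} \times \bb T^{2k_1} \times \bb T^{2k_2}$ and $J'$ on $\bb T^{2k_2}$ so that $(F, J)$ is regular for the pair $(L, L)$ and $(F', J')$ is regular for $(L_0', L_1')$. As explained in Section \ref{sec:kunn-form-prod}, the split pair $(F \oplus F', J \oplus J')$ is then regular for $(L_0, L_1)$. By Definition \ref{def:specialized_lag_spec_inv} and Remark \ref{rem:definitions_are_compatible}, together with formula \eqref{eq:PSS_general} in the proof of Theorem \ref{theo:Floer-hom-L0L1},
\begin{align*}
\ell(a; L_0, L_1; F \oplus F')
&= \ell\bigl(\Phi^{L_0, L_1}_{F \oplus F', J \oplus J'}(a);\, L_0, L_1;\, F \oplus F'\bigr) \\
&= \ell\bigl(\Phi^{L}_{F, J}(a) \otimes [\mathrm{pt}];\, L_0, L_1;\, F \oplus F'\bigr),
\end{align*}
where the right-hand side now refers to the spectral invariant attached via Definition \ref{def:lag_spec_inv} to a Floer homology class which, thanks to the K\"unneth isomorphism \eqref{eq:Kunneth}, lies in $HF(L, L; F, J) \otimes HF(L_0', L_1'; F', J')$.

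Now apply the splitting formula \eqref{eq:prdct_form} to the class $\Phi^{L}_{F, J}(a) \otimes [\mathrm{pt}]$:
\begin{align*}
\ell\bigl(\Phi^{L}_{F, J}(a) \otimes [\mathrm{pt}];\, L_0, L_1;\, F \oplus F'\bigr)
= \ell\bigl(\Phi^{L}_{F, J}(a);\, L, L;\, F\bigr) + \ell\bigl([\mathrm{pt}];\, L_0', L_1';\, F'\bigr).
\end{align*}
The first summand equals $\ell(a; L, L; F)$ by the definition \eqref{eq:SI-L0=L1} of spectral invariants associated to Morse classes in the single-Lagrangian case (Section \ref{sec:SI-in-case-single-lagr}). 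Combining these equalities yields the claim for non-degenerate $(F, F')$, and then extends to continuous functions by the continuity argument of the first paragraph.

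No step really presents an obstacle: the whole theorem is the specialization of the general K\"unneth-type splitting \eqref{eq:prdct_form} to the situation in which the PSS-type isomorphism on the left-hand factor is the usual PSS isomorphism for a single Lagrangian $L$, while on the right-hand factor it is the trivial isomorphism $\Z_2 \to HF(L_0', L_1')$ sending the generator to $[\mathrm{pt}]$. The only care needed is the choice of product Floer data so that \eqref{eq:PSS_general} applies directly; for general regular pairs one invokes the compatibility of the PSS--type isomorphism with continuation maps \eqref{eq:PSS-comm-continuation2}, which was already built into Definition \ref{def:specialized_lag_spec_inv}.
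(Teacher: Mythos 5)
Your proposal is correct and follows essentially the same route as the paper's own proof: reduce to non-degenerate split data by the Lipschitz estimate, identify $\Phi^{L_0,L_1}_{F\oplus F', J\oplus J'}(a)$ with $\Phi^{L}_{F,J}(a)\otimes[\mathrm{pt}]$ via \eqref{eq:PSS_general}, and apply the splitting formula \eqref{eq:prdct_form} together with the definition \eqref{eq:SI-L0=L1}. No substantive differences.
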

\begin{proof}
By continuity of spectral invariants \eqref{eq:unif_continuity_split}, it is sufficient to prove the theorem for non-degenerate $F$ and $F'$.  Pick almost complex structures $J$ and $J'$ such that the pairs $(F, J)$ and $(F', J')$ are regular. We have the following chain of equalities:
\begin{align*}
\ell(a ;L_0,  &L_1;F \oplus F') &\\ 
&= \ell(\Phi_{F \oplus F', J\oplus J'}^{L_0,L_1}(a);L_0,L_1;F \oplus F') &\mbox{by Definition } \ref{def:specialized_lag_spec_inv}\;\;\,\\
 &=  \ell(\Phi_{F, J}^{L}(a)\otimes\ [\mathrm{pt}];L_0,L_1;F) &\mbox{by Equation }\eqref{eq:PSS_general} \;\\
&= \ell(\Phi_{F_, J}^{L}(a);L,L;F)+\ell([\mathrm{pt}]; L_0',L_1'; F') &\mbox{by Equation }\eqref{eq:prdct_form} \; \\
&= \ell(a;L,L;F)+\ell([\mathrm{pt}]; L_0',L_1'; F') &\mbox{by Definition }\ref{eq:SI-L0=L1}. \;\,
\end{align*} 
\end{proof}


\section{Proof of the main theorem (Theorem \ref{theo:main-theo})} \label{sec:proof}
We start this section by introducing the notations needed for the proof. 
As in Theorem \ref{theo:main-theo}, we consider a symplectic homeomorphism $\phi$ of $(\bb T^{2k_1}\times\bb T^{2k_2}, \omega_{\mathrm{std}})$ which preserves the coisotropic submanifold $C=\bb T^{2k_1}\times\bb T^{k_2}\times \{0\}^{k_2}$.  Observe that the characteristic foliation $\m F$ is parallel to the subtorus $\{0\}^{2k_1}\times\bb T^{k_2}\times \{0\}^{k_2}$. The map $\phi$ induces a homeomorphism $\phi_R$ on the reduced space $\m R=C/\m F=\bb T^{2k_1}$. Throughout the proof, given a homeomorphism $\theta$ between two spaces $X$ and $Y$, and a time-dependent function $\rho$ on $Y$, i.e. a function $\rho:[0,1]\times Y\to\R$, the composition $\rho\circ\theta$ will denote, with a slight abuse of notation, the time dependent function on $X$ defined by $\rho\circ\theta(t,x)=\rho(t,\theta(x))$ for all $t\in[0,1]$ and $x\in X$.
We want to show that $\phi_R$ preserves the spectral invariant $c_+$, i.e. for every time-dependent continuous function $f_R$ on $\m R$, $c_+(f_R\circ \phi_R)=c_+(f_R)$. 

Let $f_R$ be a time-dependent continuous function on the reduced space $\m R$ and denote $g_R=f_R\circ \phi_R$.
We denote by $f$ and $g$, respectively, the standard lifts of $f_R$ and $g_R$ to $\bb T^{2k_1}\times\bb T^{2k_2}$, given by  $f(z_1,z_2)=f_R(z_1)$ and $g(z_1,z_2)=g_R(z_1)$, for all $z_1\in \bb T^{2k_1}$, $z_2\in \bb T^{2k_2}$.
Note that by construction, $f$ coincides with $g\circ\phi^{-1}$ on the coisotropic submanifold $C$.
The situation is summarized in the following diagram:
 \begin{align*}
  \xymatrix@R=.4cm@C=.6cm{\relax
    \bb T^{2k_1+2k_2} \ar[r]^{\phi} & \bb T^{2k_1+2k_2}  & g \ar[rr] \ar[d] && g\circ \phi^{-1} \ar@{}[r]|{\hspace{.4cm}(\neq)} \ar[d] &  f \ar[d] \\
    C \ar[r]^{\phi|_C} \ar@{}[u]|\bigcup \ar[d]_{\mathrm{red}} & C \ar@{}[u]|\bigcup \ar[d]^{\mathrm{red}} & g|_C \ar[rr] \ar[d] && (g\circ \phi^{-1})|_C  \ar@{=}[r] \ar[d] & f|_C \ar[d] \\
    \m R  \ar[r]^{\phi_R} & \m R & g_R \ar[rr] && g_R\circ \phi_R^{-1}  \ar@{=}[r] & f_R 
  }
\end{align*} 

Our proof will be based on the use of Lagrangian spectral invariants applied to graphs of symplectic maps. Given a Hamiltonian function $H$ on a standard symplectic  torus $\bb T^{2n}$, the graph of its time--1 map $\phi_H^1$ is a Lagrangian submanifold of $\overline{\bb T^{2n}}\times \bb T^{2n}$. This graph is the image of the diagonal by the time--1 map of the Hamiltonian function 
$0\oplus H$ on $\overline{\bb T^{2n}}\times \bb T^{2n}$ given by $(0\oplus H)_t(q,p;Q,P)=H_t(Q,P)$. It will be convenient for us to see these Lagrangians as deformations of a standard ``coordinate'' Lagrangian subtorus rather than as deformations of the diagonal in $\overline{\bb T^{2n}}\times \bb T^{2n}$. Therefore we introduce the following two symplectic identifications:
\begin{align*}\Psi:\overline{\bb T^{2k_1+2k_2}}\times\bb T^{2k_1+2k_2} &\to \bb T^{2k_1}\times \bb T^{2k_1}\times \bb T^{2k_2}\times \bb T^{2k_2},\\
(q_1,p_1,q_2,p_2;Q_1,P_1,Q_2,P_2) &\mapsto \\
(q_1,P_1-p_1&;P_1,q_1-Q_1;q_2,P_2-p_2;P_2,q_2-Q_2), \\[8pt]
\mbox{and }\quad \Psi_R:\overline{\bb T^{2k_1}}\times\bb T^{2k_1} &\to \bb T^{2k_1}\times \bb T^{2k_1},\\
(q_1,p_1;Q_1,P_1) &\mapsto (q_1,P_1-p_1;P_1,q_1-Q_1).
\end{align*}

We see that $\Psi$ sends the diagonal of $\overline{\bb T^{2k_1+2k_2}}\times\bb T^{2k_1+2k_2}$ to the Lagrangian subtorus 
$$ L_0 = \bb T^{k_1} \times \{ 0 \} \times \bb T^{k_1} \times \{ 0 \} \times \bb T^{k_2} \times \{ 0 \} \times \bb T^{k_2} \times \{ 0 \} \,,$$
already introduced in Section \ref{sec:intro_tech}, and $\Psi_R$ sends the diagonal of $\overline{\bb T^{2k_1}}\times\bb T^{2k_1}$ to the Lagrangian subtorus
$$ L_R = \bb T^{k_1} \times \{ 0 \} \times \bb T^{k_1} \times \{ 0 \}.$$

The proof of Theorem \ref{theo:main-theo} will consist of a series of equalities and inequalities between spectral invariants. These identities are organized in four claims that we now give.
 
 The first claim follows immediately from Proposition \ref{prop:comp-lagr-ham-si} and the naturality of Lagrangian spectral invariants \eqref{eq:naturality-SI-2}. For example, \eqref{eq:3} below is due to the fact that
$$c_+(f_R)  = \ell([\Delta];\Delta,\Delta;0\oplus f_R) =  \ell([L_R];L_R,L_R;(0\oplus f_R)\circ\Psi_R^{-1})$$ with $\Delta$ the diagonal of $\overline{\bb T^{2k_1}}\times\bb T^{2k_1}$.

\begin{claim*} 
\begin{align}
c_+(f_R) &= \ell([L_R];L_R,L_R;(0\oplus f_R)\circ\Psi_R^{-1})\, ,\label{eq:3}\\
c_+(g_R) &= \ell([L_R];L_R,L_R;(0\oplus g_R)\circ\Psi_R^{-1})\, ,\label{eq:4}\\
c_+(g) &= \ell([L_0];L_0,L_0;(0\oplus g)\circ\Psi^{-1})\,,\label{eq:5}\\
c_+(g\circ\phi^{-1}) &= \ell([L_0];L_0,L_0;(0\oplus g\circ\phi^{-1})\circ\Psi^{-1})\,.\label{eq:6}
\end{align}
\end{claim*}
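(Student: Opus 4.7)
The four identities are all instances of a single principle, so I would prove them by applying two ingredients in succession: Proposition \ref{prop:comp-lagr-ham-si} (which converts a Hamiltonian spectral invariant into a Lagrangian one on the diagonal) and the naturality formula \eqref{eq:naturality-SI-2} for Lagrangian spectral invariants (which lets us push the diagonal onto the coordinate Lagrangian via a symplectomorphism). The manifolds $\bb T^{2k_1}$ and $\bb T^{2k_1+2k_2}$ with their standard symplectic forms are symplectically aspherical, so Proposition \ref{prop:comp-lagr-ham-si} applies in both cases.

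For \eqref{eq:3}, let $\Delta \subset \overline{\bb T^{2k_1}}\times\bb T^{2k_1}$ be the diagonal. Proposition \ref{prop:comp-lagr-ham-si} applied to the continuous Hamiltonian $f_R$ on $\bb T^{2k_1}$ gives
$$c_+(f_R) = \ell([\Delta];\Delta,\Delta; 0\oplus f_R).$$
Now $\Psi_R \co \overline{\bb T^{2k_1}}\times\bb T^{2k_1} \to \bb T^{2k_1}\times\bb T^{2k_1}$ is a symplectomorphism (between $(-\omega_{\mathrm{std}})\oplus\omega_{\mathrm{std}}$ and the standard form on the target) that sends $\Delta$ diffeomorphically onto $L_R$. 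Under the induced isomorphism on Morse homology, $(\Psi_R)_*([\Delta]) = [L_R]$ because $\Psi_R|_\Delta$ is a diffeomorphism between two tori and must send the fundamental class to the fundamental class. Applying naturality \eqref{eq:naturality-SI-2} to $\Psi_R$ yields
$$\ell([\Delta];\Delta,\Delta; 0\oplus f_R) = \ell([L_R];L_R,L_R;(0\oplus f_R)\circ\Psi_R^{-1}),$$
which gives \eqref{eq:3}. The proof of \eqref{eq:4} is identical, with $g_R$ in place of $f_R$.

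For \eqref{eq:5} and \eqref{eq:6}, the same argument is carried out one dimension higher: Proposition \ref{prop:comp-lagr-ham-si} applied on $\bb T^{2k_1+2k_2}$ gives
$$c_+(g) = \ell([\Delta'];\Delta',\Delta';0\oplus g), \qquad c_+(g\circ\phi^{-1}) = \ell([\Delta'];\Delta',\Delta';0\oplus g\circ\phi^{-1}),$$
where $\Delta'$ is the diagonal of $\overline{\bb T^{2k_1+2k_2}}\times\bb T^{2k_1+2k_2}$. The map $\Psi$ is a symplectomorphism sending $\Delta'$ to $L_0$ and sending $[\Delta']$ to $[L_0]$ on Morse homology, so naturality \eqref{eq:naturality-SI-2} gives the desired equalities.

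There is essentially no obstacle here: the only small points to verify are that the diagonals are weakly exact (they are, since each ambient manifold is symplectically aspherical and the diagonal is itself a torus with vanishing $\pi_2$), that $\Psi$ and $\Psi_R$ are genuine symplectomorphisms with respect to the relevant forms (a direct computation on the generators $dq_i$, $dp_i$, $dQ_i$, $dP_i$), and that they map diagonals to the stated coordinate Lagrangians, which is immediate from the formulas: setting $(Q_i,P_i)=(q_i,p_i)$ makes the second and fourth blocks of $\Psi$ (respectively $\Psi_R$) vanish. Once these are checked, each of the four equalities is a single line.
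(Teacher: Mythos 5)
Your proposal is correct and follows exactly the route the paper takes: each identity is Proposition \ref{prop:comp-lagr-ham-si} (converting $c_+$ into a Lagrangian spectral invariant of the diagonal) followed by naturality \eqref{eq:naturality-SI-2} applied to the symplectomorphism $\Psi_R$ (resp.\ $\Psi$), which carries the diagonal onto $L_R$ (resp.\ $L_0$). The additional verifications you include (weak exactness of the diagonals, that $\Psi$, $\Psi_R$ are symplectomorphisms sending fundamental class to fundamental class) are exactly the points the paper leaves implicit.
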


The next statement gives a relation between the spectral invariants of Lagrangians and functions defined on different spaces. This will be based on the splitting formula (\ref{eq:prdct_form}). As in Section \ref{sec:intro_tech}, we denote
\begin{align*}
  L_1 = \bb T^{k_1} \times \{ 0 \} \times \bb T^{k_1} \times \{ 0 \} \times \bb T^{k_2} \times \{ 0 \} \times \{ 0 \} \times \bb T^{k_2}\,.
\end{align*}
 We also recall that $L_0$ and $L_1$ split in the following form (see Section \ref{sec:intro_tech}):
$$L_0=L\times L_0'\quad\text{and}\quad L_1=L\times L_1'.$$

\begin{claim*} 
\begin{align}\ell([L_R];L_R,L_R;(0\oplus f_R)\circ\Psi_R^{-1}) &= \ell([L];L_0,L_1;(0\oplus f)\circ\Psi^{-1})\,,\label{eq:7}\\
 \ell([L_R];L_R,L_R;(0\oplus g_R)\circ\Psi_R^{-1}) &= \ell([L_0];L_0,L_0;(0\oplus g)\circ\Psi^{-1})\,.\label{eq:10}
\end{align}
\end{claim*}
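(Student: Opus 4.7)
The plan is to reduce both identities to the splitting formulas of Section \ref{sec:prdct_form} and Theorem \ref{theo:prdct_form_l0l1} via an explicit unfolding of the Hamiltonians $(0\oplus f)\circ\Psi^{-1}$ and $(0\oplus g)\circ\Psi^{-1}$. First I would unpack these: inverting $\Psi$ in the coordinates $(a_j,b_j,A_j,B_j)$ on $\bb T^{2k_1}\times\bb T^{2k_1}\times\bb T^{2k_2}\times\bb T^{2k_2}$ and using the fact that $f(z_1,z_2)=f_R(z_1)$ depends only on the first factor, one directly computes
$$(0\oplus f)\circ\Psi^{-1}(a_1,b_1,A_1,B_1,a_2,b_2,A_2,B_2) = f_R(a_1-B_1,A_1).$$
The analogous computation with $\Psi_R$ gives $(0\oplus f_R)\circ\Psi_R^{-1}(a_1,b_1,A_1,B_1)=f_R(a_1-B_1,A_1)$. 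Setting $F_0:=(0\oplus f_R)\circ\Psi_R^{-1}$ and $G_0:=(0\oplus g_R)\circ\Psi_R^{-1}$, this yields the key observation that, as functions on $\bb T^{2k_1}\times\bb T^{2k_1}\times\bb T^{2k_2}\times\bb T^{2k_2}$, one has split Hamiltonians $(0\oplus f)\circ\Psi^{-1} = F_0 \oplus 0$ and $(0\oplus g)\circ\Psi^{-1} = G_0 \oplus 0$, with the trivial factors over $\bb T^{2k_2}\times\bb T^{2k_2}$.

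For \eqref{eq:7} I would regroup the factors as $\bigl[\bb T^{2k_1}\times\bb T^{2k_1}\times\bb T^{2k_2}\bigr]\times\bb T^{2k_2}$ so as to match the decompositions $L_0=L\times L_0'$ and $L_1=L\times L_1'$, and view $(0\oplus f)\circ\Psi^{-1}$ accordingly as $F\oplus F'$ with $F=F_0\oplus 0$ on the first block and $F'=0$ on the last $\bb T^{2k_2}$. Theorem \ref{theo:prdct_form_l0l1} then gives
$$\ell([L];L_0,L_1;(0\oplus f)\circ\Psi^{-1}) = \ell([L];L,L;F_0\oplus 0) + \ell([\mathrm{pt}];L_0',L_1';0).$$
The last term vanishes by spectrality: since $L_0'\cap L_1'=\{0\}$ and $H\equiv 0$, one has $\Spec(0)=\{0\}$. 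Next I would split once more using $L=L_R\times L''$ with $L''=\bb T^{k_2}\times\{0\}\subset\bb T^{2k_2}$, so that $[L]=[L_R]\otimes[L'']$ under the K\"unneth isomorphism for Morse homology. Formula \eqref{eq:prdct_form_l0=l1} then yields
$$\ell([L];L,L;F_0\oplus 0)=\ell([L_R];L_R,L_R;F_0) + \ell([L''];L'',L'';0),$$
where the second term equals zero by Corollary \ref{corol:H=c-then-l=c}. Combining these equalities gives exactly \eqref{eq:7}.

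For \eqref{eq:10} the argument is simpler, as both Lagrangians coincide and only the single-Lagrangian splitting is needed. Writing $L_0=L_R\times L_*$ with $L_*=\bb T^{k_2}\times\{0\}\times\bb T^{k_2}\times\{0\}\subset\bb T^{2k_2}\times\bb T^{2k_2}$ one has $[L_0]=[L_R]\otimes[L_*]$, and then \eqref{eq:prdct_form_l0=l1} applied to $G_0\oplus 0$ gives
$$\ell([L_0];L_0,L_0;G_0\oplus 0)=\ell([L_R];L_R,L_R;G_0)+\ell([L_*];L_*,L_*;0),$$
with the second term again zero by Corollary \ref{corol:H=c-then-l=c}.

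I do not expect a serious obstacle here: the splitting formulas do essentially all of the work once the Hamiltonians are put in split form. The only care required is bookkeeping---choosing compatible product decompositions of the ambient tori, of the Hamiltonians, and of the Lagrangians so that each application of a splitting formula literally falls under the hypotheses of Theorem \ref{theo:prdct_form_l0l1} or \eqref{eq:prdct_form_l0=l1}---together with the routine check that the fundamental classes factor as tensor products of fundamental classes under the Morse K\"unneth isomorphism.
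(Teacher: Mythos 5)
Your proposal is correct and follows essentially the same route as the paper: decompose the pulled-back Hamiltonian as a split Hamiltonian $F_R\oplus 0\oplus 0$ compatible with the factorizations $L_i=(L_R\times\Lambda)\times L_i'$, apply Theorem \ref{theo:prdct_form_l0l1} and then the single-Lagrangian splitting formula \eqref{eq:prdct_form_l0=l1}, and kill the extra terms by spectrality. The only (harmless) differences are that you make the coordinate computation of $(0\oplus f)\circ\Psi^{-1}$ explicit and, for \eqref{eq:10}, apply \eqref{eq:prdct_form_l0=l1} in one step rather than iterating the two-step argument.
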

\begin{proof} The Lagrangians $L_0$ and $L_1$ both contain $L_R$ and moreover can be decomposed in the form
\begin{align*}
  L_i = \underbrace{L_R \times \Lambda}_{\qquad = \, L \;\subset\; \bb T^{2k_2}\times\bb T^{2k_2}\times \bb T^{2k_1}} \;\times\; \underbrace{L'_i}_{\quad \subset\; \bb T^{2k_2}} \quad \mbox{ for $i=0$ and $1$}
\end{align*}
where $\Lambda=\bb T^{k_2}\times\{0\}\subset \bb T^{2k_2}$.
Then, if we denote $F=(0\oplus f)\circ\Psi^{-1}$ and $F_R=(0\oplus f_R)\circ\Psi_R^{-1}$, we see that we can decompose $F$ according to this spliting: $F=F_R\oplus 0\oplus 0$, where both $0$'s are seen as functions on $\bb T^{2k_2}$. By Theorem \ref{theo:prdct_form_l0l1},
$$\ell([L];L_0,  L_1;F)= \ell([L];L,L; F_R\oplus 0) + \ell([pt];L_0',L_1';0).$$
The second term on the right hand side vanishes. We may then apply the splitting formula (\ref{eq:prdct_form_l0=l1}):
\begin{align*}
\ell([L];L_0,  L_1;F) &= \ell([L];L,L; F_R\oplus 0)\\
&= \ell([L_R];L_R,L_R;F_R)+\ell([\Lambda];\Lambda,\Lambda;0),
\end{align*}
where again the second term vanishes. 
This proves Equation (\ref{eq:7}).

To prove Equation (\ref{eq:10}), one only needs to replace the pair $(L_0,L_1)$ by $(L_0,L_0)$, the pair $(L_0',L_1')$ by $(L_0',L_0')$, the function $f$ by $g$ and the function $f_R$ by $g_R$, and repeat the same argument. 
\end{proof}

We will also need the following equality which is essentially a manifestation of the fact that at any fixed time, the functions $f$ and $g\circ\phi^{-1}$ are constant on the leaves of the coisotropic submanifold $C$ and coincide on it.

\begin{claim*}
  \begin{align}
    \ell([L];L_0,L_1;(0\oplus f)\circ\Psi^{-1})=\ell([L];L_0,L_1;(0\oplus g\circ\phi^{-1})\circ\Psi^{-1}) \,. \label{eq:8}
  \end{align}
\end{claim*}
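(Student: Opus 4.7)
My plan is as follows. By naturality of the Lagrangian spectral invariants under $\Psi$, the claimed equality is equivalent to
\[
\ell([L']; \Delta, \Psi^{-1}(L_1); 0\oplus f) = \ell([L']; \Delta, \Psi^{-1}(L_1); 0\oplus(g\circ\phi^{-1})),
\]
where $\Delta$ is the diagonal of $\overline{M'}\times M'$, $M' = \bb T^{2k_1+2k_2}$, and $\Psi^{-1}(L_1) = \{(x,y)\in C\times C : \mathrm{red}(x)=\mathrm{red}(y)\}$. Note that $f=g\circ\phi^{-1}$ on $C$, and both restrictions descend to $f_R$ under $\mathrm{red}$. I would then approximate $g\circ\phi^{-1}$ by smooth functions $h_n \to g\circ\phi^{-1}$ uniformly on $M'$ with the constraint $h_n|_C = f|_C$ for every $n$; such $h_n$ are built by interpolating a standard smoothing of $g\circ\phi^{-1}$ with $f$ in a shrinking neighborhood of $C$. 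By the Lipschitz estimate \eqref{eq:unif_continuity_split}, it then suffices to prove the equality with $g\circ\phi^{-1}$ replaced by $h_n$ for each $n$.

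For each $n$, I would introduce the linear homotopy $K_n^s = sf + (1-s) h_n$ for $s\in [0,1]$. The crucial point is that $K_n^s|_C = f|_C$ is constant on the characteristic leaves of $C$, so by the standard principle for coisotropic submanifolds, $X_{K_n^s}$ is tangent to $C$ along $C$, and hence $\phi_{K_n^s}^t$ preserves $C$, with induced flow on $\m R$ equal to $\phi_{f_R}^t$ for every $s$ and $n$. Analyzing the boundary conditions, every Hamiltonian chord of $0\oplus K_n^s$ between $\Delta$ and $\Psi^{-1}(L_1)$ is of the form $x(t) = (c, \phi_{K_n^s}^t(c))$ with $c \in C$ and $\mathrm{red}(c) \in \mathrm{Fix}(\phi_{f_R}^1)$. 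I then claim that $\m A^{0\oplus K_n^s}(x)$ depends only on $\zeta = \mathrm{red}(c)$ and in fact equals the action of the $1$-periodic orbit of $f_R$ through $\zeta$ on $\m R$. The Hamiltonian integral part is immediate since $x_1(t) \in C$ and $K_n^s|_C = f_R \circ \mathrm{red}$. For the capping contribution, any two such chords can be connected by a family $(\gamma(\tau), \phi_{K_n^{s(\tau)}}^t(\gamma(\tau)))$ with $\gamma$ a path in the characteristic leaf over $\zeta$; since $\partial_\tau \bar x_1 \in TC^\omega$ and $\partial_t \bar x_1 \in TC$ are $\omega$-orthogonal by coisotropy of $C$, the interpolating cylinder has zero symplectic area, and by weak exactness of the pair $(\Delta, \Psi^{-1}(L_1))$ the two actions coincide.

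Consequently $\mathrm{Spec}(0\oplus K_n^s; \Delta, \Psi^{-1}(L_1)) \subset \mathrm{Spec}(f_R)$ for every $s$ and $n$, and the right-hand side is a measure zero, hence totally disconnected, subset of $\R$. By spectrality and Lipschitz continuity, the map $s\mapsto \ell([L']; \Delta, \Psi^{-1}(L_1); 0\oplus K_n^s)$ is continuous with values in a totally disconnected set, hence constant; evaluating at $s=0$ and $s=1$ gives the equality for $h_n$, and letting $n\to\infty$ finishes the proof. The hard part will be the capping invariance at the heart of this spectrum inclusion: establishing that $\m A^{0\oplus K_n^s}(x)$ is determined by $\zeta$ alone, which uses weak exactness of $(\Delta, \Psi^{-1}(L_1))$ together with coisotropy of $C$ in an essential way.
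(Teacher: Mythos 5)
Your argument is, at its core, the same as the paper's: approximate by smooth Hamiltonians compatible with the coisotropic structure, run a linear homotopy between the two endpoints, observe that all Hamiltonian chords are confined to the coisotropic (here you work upstairs with $\Delta$ and $\Psi^{-1}(L_1)$ in $\overline{M'}\times M'$ rather than with $L_0,L_1$ in the $\Psi$--coordinates, which is an equivalent formulation by naturality), show that the action is constant along families of chords obtained by moving inside characteristic leaves because the interpolating cylinder pairs a $TC^\omega$--direction against a $TC$--direction and hence has zero symplectic area, and conclude by spectrality, continuity, and the measure-zero property of the spectrum. The identification of the common spectrum with $\Spec(f_R)$ is more than is needed (and would require an extra capping-comparison between $\m R$ and the ambient product); it suffices, as in the paper, that the spectrum is a fixed measure-zero set independent of the homotopy parameter.

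There is, however, one concrete defect in your approximation step. You require $h_n$ to be smooth with $h_n|_C=f|_C$, and your homotopy $K_n^s=sf+(1-s)h_n$ contains $f$ itself as a summand. But $f$ is only the lift of a \emph{continuous} function $f_R$, so in general $f|_C$ is not smooth, no smooth $h_n$ can restrict to it, and $K_n^s$ is not a smooth Hamiltonian — at which point the flows $\phi^t_{K_n^s}$, the chords, and the capping computation are no longer defined. The paper circumvents this by smoothing \emph{both} functions together with their common leafwise restriction: one takes arbitrary smooth approximations $F_k'$, $G_k'$, a smooth approximation $H_k$ of the common reduced function on the coisotropic, lifts it to $H_k'$, and corrects $F_k'$ and $G_k'$ so that both coincide with $H_k'$ (rather than with $F|_W$) on the coisotropic while remaining leafwise functions of time. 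Your argument can be repaired either by adopting this construction, or by first reducing to the case of smooth $f_R$ via the Lipschitz estimate \eqref{eq:unif_continuity_split} (note that $g\circ\phi^{-1}$ still need not be smooth, so the approximation of that side is still required); as written, though, the construction of $h_n$ and the smoothness of $K_n^s$ fail for general continuous $f_R$.
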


\begin{proof} Denote $F=(0\oplus f)\circ\Psi^{-1}$ and $G=(0\oplus g\circ\phi^{-1})\circ\Psi^{-1}$. Since $f$ and $g\circ\phi^{-1}$ coincide on $C$, the continuous functions $F$ and $G$ coincide on the coisotropic submanifold
$$W=\Psi(C\times C)\subset\bb T^{2k_1}\times\bb T^{2k_1}\times\bb T^{2k_2}\times\bb T^{2k_2}.$$
Observe that the Lagrangian $L_1$ is contained in $W$.

Since $f$ and $g$ are the respective lifts of $f_R$ and $g_R$,   their restriction to each leaf of $C$ only depends on the time variable.
Since $\phi^{-1}$ preserves the characteristic foliation of $C$, the function $g\circ\phi^{-1}$ is also a function of time on each leaf of $C$.
From this we deduce that $F$ and $G$ are functions of time on each characteristic leaf of $W$.
 
Now let $(F_k)_{k\in\bb N}$, $(G_k)_{k\in\bb N}$ be sequences of smooth Hamiltonians which uniformly converge  to $F$ and $G$, respectively, with the additional property that for all $k\in \bb N$, $F_k$ and $G_k$ are functions of time on each leaf of $W$
and $F_k-G_k=0$ on $W$. Such sequences can be constructed as follows. Let $F_k'$, $G_k'$ be two sequences of Hamiltonians that converge uniformly to $F$, $G$.
Note that the restrictions of $F$ and $G$ coincide on $W$ and are functions of time on each leaf hence admit the same reduced function $H$. Let $H_k$ be a sequence of Hamiltonians on the reduced space of $W$ which converges to $H$ uniformly. Each function $H_k$ can be lifted to a function $H_k'$ defined on $W$. By construction the functions  $F_k'-H_k'$ and $G_k'-H_k'$ converge to 0 on $W$. Denote by $F_k''$ and $G_k''$ their trivial extensions to $\bb T^{2k_1}\times\bb T^{2k_1}\times\bb T^{2k_2}\times\bb T^{2k_2}$, which also converge to 0. The functions $F_k=F_k'-F_k''$ and $G_k=G_k'-G_k''$ suit our needs: They converge respectively to $F$ and $G$ and they both coincide with the leafwise function $H_k'$ on $W$.

We will next show that  $\ell([L]; L_0, L_1; F_k) = \ell([L]; L_0, L_1; G_k)$ for all $k$.  The claim would then  follow by taking the limit of both sides as $ k \to \infty.$  Fix $k$ and let $H_r = rG_k +(1-r) F_k$ where $r \in [0,1]$. 
We will in fact prove the stronger statement that $\ell([L]; L_0, L_1; H_r)$ is a constant function of the variable $r$. 

For any $r$, $r' \in [0,1]$ the Hamiltonians $H_r$ and $H_{r'}$ are functions of time on each leaf of $W$ and $H_r = H_{r'}$ on $W$.  This is because the same statement is true for $F_k$ and $G_k$. It is not hard to check that this implies that for any point $p \in W$ and any $t \in [0,1]$ we have:
\begin{equation}\label{eq:leaf_flow}
\phi^t_{H_r}(p) \text{ and } \phi^t_{H_{r'}}(p) \text{ belong to the same characteristic leaf of } W. \end{equation}
Now, consider a critical point of the action functional $\m A^{L_0,L_1}_{H_r}$: It is a Hamiltonian chord $\phi^t_{H_r}(p)$ where $ p \in L_0$ and $\phi^1_{H_r}(p) \in L_1.$  The Hamiltonian $H_r$ is a function of time on characteristic leaves and so its  flow $\phi^t_{H_r}$ preserves $W$.  Since $\phi^1_{H_r}(p) \in L_1 \subset W$, we conclude that $\phi^t_{H_r}(p) \in W$ for all $t \in [0,1]$.  Using \eqref{eq:leaf_flow}, we see that $\phi^t_{H_{r'}}(p) \in W$ for any $t, r' \in [0,1]$.  Furthermore, \eqref{eq:leaf_flow} implies that $\phi^1_{H_{r'}}(p) \in L_1$:  This is because the Lagrangian $L_1 \subset W$ and hence any characteristic leaf of $W$ which intersects $L_1$ is entirely contained in $L_1$.  We conclude from the above that $t \mapsto \phi^t_{H_{r'}}(p)$ is a critical point of the action functional $\m A^{L_0,L_1}_{H_{r'}}$ and so there exists a bijection between the critical points of the two action functionals.
 
 Next, we will show that the two chords $\phi^t_{H_r}(p)$ and $\phi^t_{H_{r'}}(p)$ have the same action.  Since $H_r$ and $H_{r'}$ coincide on the leaves of $W$ we see, using \eqref{eq:leaf_flow}, that $H_r(\phi^t_{H_r}(p)) = H_{r'}(\phi^t_{H_r'}(p)).$  Hence, to show that the two chords have the same action we must prove that any two cappings  $u_r$ of  $\phi^t_{H_r}(p)$ and  $u_{r'}$ of $\phi^t_{H_{r'}}(p)$ have the same symplectic area. We will prove this using \eqref{eq:leaf_flow} as well.  Suppose that $r < r'$.  Fix any choice of $u_r$ and define $u_{r'} = u_r \# v$, where $v:[r, r'] \times [0,1] \to \bb T^{2n} \times \bb T^{2n}$ is defined by: $v(s,t) = \phi^t_{H_s}(p).$  We must show that the symplectic area of $v$ is zero:   Note that \eqref{eq:leaf_flow} implies that for any fixed $t$  the path $s \mapsto \phi^t_{H_s}(p)$ is contained in the same characteristic leaf of $W$.  Therefore, $\frac{\partial v}{\partial s}$ is always tangent to the characteristic leaves of $W$.  This combined with the fact that the image of $v$ is contained in $W$ yields that $\omega_{\mathrm{std}}(\frac{\partial v}{\partial s}, \frac{\partial v}{\partial t}) = 0$.  Hence, $v$ has zero symplectic area and the symplectic area of $u_r$ coincides with that of $u_{r'}$. 
 
 We conclude from the previous two paragraphs that $\Spec(H_r) = \Spec(H_{r'})$ for any $r$, $r' \in [0,1]$.  Recall that the spectrum of any Hamiltonian has measure zero.  We see that $r \mapsto \ell([L]; L_0, L_1; H_r)$ is a continuous function taking values in a measure zero set and thus it must be constant.  This finishes the proof.
\end{proof}
Finally, the following claim is a direct application of Proposition \ref{cor:triangle-inequality-1}.

\begin{claim*}\begin{align}\ell([L];L_0,L_1;(0\oplus g\circ\phi^{-1})\circ\Psi^{-1})\leq \ell([L_0];L_0,L_0;(0\oplus g\circ\phi^{-1})\circ\Psi^{-1}) \,.\label{eq:9}
  \end{align}
\end{claim*}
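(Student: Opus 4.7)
The plan is to obtain inequality \eqref{eq:9} as an immediate specialization of Proposition \ref{cor:triangle-inequality-1}. Concretely, I would set $a=[L]\in HM(L)$ (the fundamental class, which is nonzero), $i=0$, and take as Hamiltonian $H=(0\oplus g\circ\phi^{-1})\circ\Psi^{-1}$ on $\bb T^{2k_1}\times\bb T^{2k_1}\times\bb T^{2k_2}\times\bb T^{2k_2}$. With these choices, the conclusion of Proposition \ref{cor:triangle-inequality-1} reads
$$\ell([L];L_0,L_1;H)\leq \ell([L_0];L_0,L_0;H),$$
which is exactly \eqref{eq:9}.

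Before invoking the proposition, I would briefly check that its hypotheses are met. The pairs $(L_0,L_1)$ and $(L_0,L_0)$ are weakly exact with respect to the common base intersection point $0$ fixed in Section \ref{sec:lag_Floer_tori} (this is the content of Example \ref{ex:weakly_exact}), so the Floer homologies $HF(L_0,L_1)$ and $HF(L_0,L_0)$ and their associated spectral invariants are defined. The function $g\circ\phi^{-1}$ is only continuous rather than smooth (since $\phi$ is merely a symplectic homeomorphism), but $H=(0\oplus g\circ\phi^{-1})\circ\Psi^{-1}$ is nevertheless a time-dependent continuous function, so both sides of \eqref{eq:9} are defined via the Lipschitz extension of spectral invariants to $C^0([0,1]\times M)$ established in Sections \ref{sec:lagr-spectr-invar} and \ref{sec:spec-floer-theory}. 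By \eqref{eq:unif_continuity} and \eqref{eq:unif_continuity_split}, the inequality of Proposition \ref{cor:triangle-inequality-1}, which is first proved for non-degenerate (hence smooth) Hamiltonians, extends by continuity to this $C^0$ setting.

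There is no real obstacle here: the entire content of the claim is already packaged in Proposition \ref{cor:triangle-inequality-1}, whose proof in turn reduced to the non-standard triangle inequality of Theorem \ref{theo:triangle_ineq_main} applied to $b=[L]$, $H=0$, combined with the time-reparametrization observation of Remark \ref{rem:time_rep}. Hence the proof of the claim amounts to a single sentence quoting Proposition \ref{cor:triangle-inequality-1} with the parameters specified above.
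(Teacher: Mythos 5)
Your proposal is correct and is exactly the paper's argument: the claim is stated there as "a direct application of Proposition \ref{cor:triangle-inequality-1}," with the same specialization $a=[L]$, $i=0$, $H=(0\oplus g\circ\phi^{-1})\circ\Psi^{-1}$. Your added remarks on weak exactness and on the $C^0$ extension of the spectral invariants are accurate but not needed beyond what the paper already establishes.
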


\noindent\textbf{End of the proof of Theorem \ref{theo:main-theo}.}
We now gather the identities collected in the above claims. Using the fact that $\phi^{-1}$ preserves $c_+$, we obtain:
\begin{align*}
c_+(f_R) &\stackrel{~(\ref{eq:3})}{=} \ell([L_R];L_R,L_R;(0\oplus f_R)\circ\Psi_R^{-1})\\
&\stackrel{~(\ref{eq:7})}{=} \ell([L];L_0,L_1;(0\oplus f)\circ\Psi^{-1})\\
&\stackrel{~(\ref{eq:8})}{=} \ell([L];L_0,L_1;(0\oplus g\circ\phi^{-1})\circ\Psi^{-1})\\
&\stackrel{~(\ref{eq:9})}{\leq} \ell([L_0];L_0,L_0;(0\oplus g\circ\phi^{-1})\circ\Psi^{-1})\\
&\stackrel{~(\ref{eq:6})}{=} c_+(g\circ\phi^{-1})\\
&\ \,= c_+(g)\\
&\stackrel{~(\ref{eq:5})}{=} \ell([L_0];L_0,L_0;(0\oplus g)\circ\Psi^{-1})\\
&\stackrel{~(\ref{eq:10})}{=} \ell([L_R];L_R,L_R;(0\oplus g_R)\circ\Psi_R^{-1})\\
&\stackrel{~(\ref{eq:4})}{=} c_+(g_R).
\end{align*}
Switching the roles played by $f_R$ and $g_R$ yields the reverse inequality $c_+(g_R)\leq c_+(f_R)$. Hence, $c_+(f_R)=c_+(g_R)$.

%
%

\bibliographystyle{abbrv}
\bibliography{biblio}

\end{document}